\documentclass[11pt]{elsarticle}


\journal{Journal of Computational Physics}
\usepackage[margin=2.5cm]{geometry}
\biboptions{sort&compress}










\makeatletter
\providecommand{\doi}[1]{%
	\begingroup
	\let\bibinfo\@secondoftwo
	\urlstyle{rm}%
	\href{http://dx.doi.org/#1}{%
		doi:\discretionary{}{}{}%
		\nolinkurl{#1}%
	}%
	\endgroup
}
\makeatother


\usepackage{graphicx}
\usepackage{dcolumn}
\usepackage{bm}
\usepackage{hyperref}
\usepackage{epstopdf}
\usepackage{amsmath,esint}
\usepackage{amsmath}
\usepackage{amsfonts}
\usepackage{amssymb}
\usepackage{graphicx}
\usepackage{amsmath}
\usepackage{amsthm}
\usepackage{eucal}
\usepackage{amssymb}
\usepackage{mathrsfs}
\usepackage{float}
\usepackage{hyperref}
\usepackage{multirow}
\usepackage{booktabs}
\usepackage{graphicx}
\usepackage{subfigure}
\usepackage{dashrule}
\usepackage{adjustbox}
\usepackage{xcolor}
\usepackage{graphicx}
\usepackage{dcolumn}
\usepackage{bm}
\usepackage{natbib}
\usepackage{amssymb}
\usepackage{amsthm}
\usepackage{mathtools}
\usepackage{empheq}
\usepackage[most]{tcolorbox}
\usepackage{mathtools}
\usepackage{cleveref}
\usepackage{soul}
\usepackage{array}

\usepackage{tikz}
\usetikzlibrary{shapes,arrows,chains}
\usetikzlibrary{calc}
\usetikzlibrary{positioning}

\usepackage{tgpagella}
\usepackage{newpxmath}


\renewcommand{\vec}[1]{\mathbf{#1}}


\usepackage{xspace}
\let\storeBeta=\beta
\renewcommand\beta{\relax\ifmmode{\storeBeta}\else{$\storeBeta$}\fi\xspace}


\let\storeAlpha=\alpha
\renewcommand\alpha{\relax\ifmmode{\storeAlpha}\else{$\storeAlpha$}\fi\xspace}

 
\newcommand{\abs}[1]{\left| #1 \right|} 
\renewcommand{\d}[2]{\frac{\mathrm{d} #1}{\mathrm{d} #2}} 
\newcommand{\pd}[2]{\frac{\partial #1}{\partial #2}} 
 
\let\baraccent=\= 
\renewcommand{\=}[1]{\stackrel{#1}{=}} 

\newcount\colveccount
\newcommand*\colvec[1]{
	\global\colveccount#1
	\begin{pmatrix}
		\colvecnext
	}
	\def\colvecnext#1{
		#1
		\global\advance\colveccount-1
		\ifnum\colveccount>0
		\\
		\expandafter\colvecnext
		\else
	\end{pmatrix}
	\fi
}

\newcommand{\norm}[1]{\left\lVert#1\right\rVert}

\def\tphi{\widetilde{\phi}}
\def\tmu{\widetilde{\mu}}
\def\trhok{\widetilde{\rho}^{\, k}}
\def\tetak{\widetilde{\eta}^{\, k}}
\def\rhokplusOne{\rho^{\, {k+1}}}
\def\rhok{\rho^{\, k}}

\def\tp{\widetilde{p}}
\def\tvi{\widetilde{v}_i}
\def\tvj{\widetilde{v}_j}
\def\tui{\widetilde{u}_i}

\def\tJi{\widetilde{J}_i}
\def\tJj{\widetilde{J}_j}
\def\tpsi{\widetilde{\psi}}

\def\hui{\widehat{u}_i}
\def\huj{\widehat{u}_j}

\def\hJj{\widehat{J}_j}

\def\dt{\delta t}



\hypersetup{
	colorlinks,
	linkcolor={blue!50!black},
	citecolor={blue!50!black},
	urlcolor={blue!80!black},
	anchorcolor = {blue!80!black},
	filecolor = {blue!80!black},
	menucolor = {blue!80!black},
	runcolor = {blue!80!black}
}

\newtheorem{lemma}{Lemma}
\newtheorem{proposition}{Proposition}

\newtheorem{definition}{Definition}
\newtheorem{remark}{Remark}

%

\hbadness=99999

\usepackage{pgfplots}
\pgfplotsset{
	table/search path={Figures},
}
\usepgfplotslibrary{units} 
\usepgfplotslibrary{colorbrewer}
\usetikzlibrary{arrows,shapes}
\pgfplotsset{compat=1.8}
\pgfplotsset{
	discard if/.style 2 args={
		x filter/.code={
			\edef\tempa{\thisrow{#1}}
			\edef\tempb{#2}
			\ifx\tempa\tempb
			\def\pgfmathresult{inf}
			\fi
		}
	},
	discard if not/.style 2 args={
		x filter/.code={
			\edef\tempa{\thisrow{#1}}
			\edef\tempb{#2}
			\ifx\tempa\tempb
			\else
			\def\pgfmathresult{inf}
			\fi
		}
	}
}
\usetikzlibrary{plotmarks}
\usetikzlibrary{spy}
\usetikzlibrary{arrows,shapes,plotmarks}
\usepgfplotslibrary{groupplots}
\usetikzlibrary{snakes}


\usetikzlibrary{calc}



\usetikzlibrary{matrix}


\usepackage{xargs}                      
\usepackage{siunitx}

\usepackage[colorinlistoftodos,prependcaption,textsize=tiny]{todonotes}

\newcounter{bgunsure}
\newcounter{bgchange}
\newcounter{bginfo}
\newcounter{bgimprovement}
\newcounter{bgthiswillnotshow}
\newcommandx{\bgunsure}[2][1=]{\refstepcounter{bgunsure}\todo[linecolor=red,backgroundcolor=red!25,bordercolor=red,#1]{[BG\thebgunsure:] #2}}
\newcommandx{\bgchange}[2][1=]{\refstepcounter{bgchange}\todo[linecolor=blue,backgroundcolor=blue!25,bordercolor=blue,#1]{[BG\thebgchange:] #2}}
\newcommandx{\bginfo}[2][1=]{\refstepcounter{bgimprovement}\todo[linecolor=OliveGreen,backgroundcolor=OliveGreen!25,bordercolor=OliveGreen,#1]{[BG\thebgimprovement:] #2}}
\newcommandx{\bgimprovement}[2][1=]{\refstepcounter{bginfo}\todo[linecolor=Plum,backgroundcolor=Plum!25,bordercolor=Plum,#1]{[BG\thebginfo:] #2}}
\newcommandx{\bgthiswillnotshow}[2][1=]{\refstepcounter{bgthiswillnotshow}\todo[disable,#1]{[BG\thebgthiswillnotshow:] #2}}

\newcommand{\Frontera}{\href{https://www.tacc.utexas.edu/systems/frontera}{Frontera}}

\newcounter{mkunsure}
\newcounter{mkchange}
\newcounter{mkinfo}
\newcounter{mkimprovement}
\newcounter{mkthiswillnotshow}
\newcommandx{\mkunsure}[2][1=]{\refstepcounter{mkunsure}\todo[linecolor=red,backgroundcolor=red!25,bordercolor=red,#1]{[MAK\themkunsure:] #2}}
\newcommandx{\mkchange}[2][1=]{\refstepcounter{mkchange}\todo[linecolor=blue,backgroundcolor=blue!25,bordercolor=blue,#1]{[MAK\themkchange:] #2}}
\newcommandx{\mkinfo}[2][1=]{\refstepcounter{mkimprovement}\todo[linecolor=OliveGreen,backgroundcolor=OliveGreen!25,bordercolor=OliveGreen,#1]{[MAK\themkimprovement:] #2}}
\newcommandx{\mkimprovement}[2][1=]{\refstepcounter{mkinfo}\todo[linecolor=Plum,backgroundcolor=Plum!25,bordercolor=Plum,#1]{[MAK\themkinfo:] #2}}
\newcommandx{\mkthiswillnotshow}[2][1=]{\refstepcounter{mkthiswillnotshow}\todo[disable,#1]{[MAK\themkthiswillnotshow:] #2}}

\newcommand{\petsc}{\textsc{PETSc}}

\newcommand{\dendroFive}{\textsc{Dendro5}}
\newcommand{\dendroKT}{\textsc{Dendro-KT}}


\usepackage{listings}

\usepackage{xparse}

\NewDocumentCommand{\codeword}{v}{%
	\texttt{\textcolor{blue}{#1}}%
}

\lstset{language=C,keywordstyle={\bfseries \color{blue}}}

\definecolor{codegreen}{rgb}{0,0.6,0}
\definecolor{codegray}{rgb}{0.5,0.5,0.5}
\definecolor{codepurple}{rgb}{0.58,0,0.82}
\definecolor{backcolour}{rgb}{0.95,0.95,0.92}

\lstdefinestyle{mystyle}{
	backgroundcolor=\color{backcolour},   
	commentstyle=\color{codegreen},
	keywordstyle=\color{magenta},
	stringstyle=\color{codepurple},
	basicstyle=\ttfamily\footnotesize,
	breakatwhitespace=false,         
	breaklines=true,                 
	captionpos=b,                    
	keepspaces=true,                  
	showspaces=false,                
	showstringspaces=false,
	showtabs=false,                  
	tabsize=2
}

\lstset{style=mystyle}

\definecolor{sq_b1}{RGB}{37,52,148}
\definecolor{sq_b2}{RGB}{44,127,184}
\definecolor{sq_b3}{RGB}{65,182,196}
\definecolor{sq_b4}{RGB}{127,205,187}
\definecolor{sq_b5}{RGB}{199,233,180}
\definecolor{sq_b6}{RGB}{255,255,204}

\definecolor{sq_r1}{RGB}{189,0,38}
\definecolor{sq_r2}{RGB}{240,59,32}
\definecolor{sq_r3}{RGB}{253,141,60}
\definecolor{sq_r4}{RGB}{254,178,76}
\definecolor{sq_r5}{RGB}{254,217,118}
\definecolor{sq_r6}{RGB}{255,255,178}

\definecolor{sq_g1}{RGB}{0,104,55}
\definecolor{sq_g2}{RGB}{49,163,84}
\definecolor{sq_g3}{RGB}{120,198,121}
\definecolor{sq_g4}{RGB}{173,221,142}
\definecolor{sq_g5}{RGB}{217,240,163}
\definecolor{sq_g6}{RGB}{255,255,204}

\definecolor{div_c1}{RGB}{230,171,2}
\definecolor{div_c2}{RGB}{102,166,30}
\definecolor{div_c3}{RGB}{231,41,138}
\definecolor{div_c4}{RGB}{117,112,179}
\definecolor{div_c5}{RGB}{217,95,2}
\definecolor{div_c6}{RGB}{27,158,119}
\definecolor{div_c7}{RGB}{215,48,39}

\definecolor{div_d1}{RGB}{215,25,28}
\definecolor{div_d2}{RGB}{253,174,97}
\definecolor{div_d3}{RGB}{255,255,191}
\definecolor{div_d4}{RGB}{171,217,233}
\definecolor{div_d5}{RGB}{44,123,182}

\allowdisplaybreaks
\usetikzlibrary{external}
\tikzexternalize[prefix=tikz/,optimize command away=\includepdf]
\begin{document}

\begin{frontmatter}

\title{A projection-based,  semi-implicit time-stepping approach for the Cahn-Hilliard Navier-Stokes equations on adaptive octree meshes}


\author[isuMechEAddress,isuMathAddress]{Makrand A. Khanwale\fnref{MakFootnote}}
\ead{khanwale@iastate.edu}

\author[isuMechEAddress]{Kumar Saurabh}
\ead{maksbh@iastate.edu}

\author[utahAddress]{Masado Ishii}
\ead{masado@cs.utah.edu}

\author[utahAddress]{Hari Sundar}
\ead{hari@cs.utah.edu}

\author[isuMathAddress]{James A. Rossmanith}
\ead{rossmani@iastate.edu}

\author[isuMechEAddress]{Baskar~Ganapathysubramanian\corref{correspondingAuthor}}
\ead{baskarg@iastate.edu}

\cortext[correspondingAuthor]{Corresponding author}

\address[isuMechEAddress]{Department of Mechanical Engineering, Iowa State University, Iowa, USA 50011}
\address[isuMathAddress]{Department of Mathematics, Iowa State University, Iowa, USA 50011}
\address[utahAddress]{School of Computing, The University of Utah, Salt Lake City, Utah, USA 84112}

\fntext[MakFootnote]{Currently at Center for Turbulence Research, Department of Mechanical Engineering, Stanford University, CA, USA}

\begin{abstract}
The Cahn-Hilliard Navier-Stokes (CHNS) system provides a computationally
tractable model that can be used to effectively capture interfacial dynamics in 
two-phase fluid flows. In this work we present a semi-implicit, projection-based finite element framework for solving the CHNS system.
	We use a projection-based semi-implicit time discretization for the Navier-Stokes equation and a fully-implicit time discretization for the Cahn-Hilliard equation. We use a conforming continuous Galerkin (cG) finite element method in space equipped with a residual-based variational multiscale (RBVMS) formulation.  Pressure is decoupled using a projection step, which results in two linear positive semi-definite systems for velocity and pressure, instead of the saddle point system of a pressure-stabilized method.  All the linear systems are solved using an efficient and scalable algebraic multigrid (AMG) method. We deploy this approach on a massively parallel numerical implementation using parallel octree-based adaptive meshes. The overall approach allows the use of relatively large time steps with much faster time-to-solve than similar fully-implicit methods.
	We present comprehensive numerical experiments showing detailed comparisons with results from the literature for canonical cases, including the single bubble rise and Rayleigh-Taylor instability. 
\end{abstract}
 
\begin{keyword}
two-phase flows \sep energy stable \sep adaptive finite elements \sep octrees \sep variational multiscale approach
\end{keyword}

\end{frontmatter}


\section{Introduction}
\label{sec:introduction}
Capturing the interfacial dynamics is crucial for gaining fundamental understanding of two-phase flows. In several applications like bio-microfluidics and advanced manufacturing such interface resolved simulations are critical for analysis and design~\citep{Gibou2019, Xia2019,Zhang2021}. More generally, the availability of robust and fast interface-resolved two-phase simulations can greatly enable the development of (data-driven) coarse-scale models that need not be interface-resolving. This is the primary motivation of the current work~\citep{Ganti2020, Brunton2020, Duraisamy2021,Karniadakis2021}.

We are particularly interested in the coupled Cahn-Hilliard Navier-Stokes equations as a means to capture interfacial dynamics. Cahn-Hilliard Navier-Stokes (CHNS) models have received increasing attention for their ability to capture interface resolved two-phase phenomena in various applications~\citep{Guo2015,Xie2015,Jain2020,Huang2021,Mirjalili2021}. Such an approach provides a thermodynamically consistent description of the interfacial processes and evolution~\citep{ Anderson1998}. CHNS belongs to a family of models where a diffused field (also known as phase field) is used to track the interface. 
Such an approach circumvents modeling the jump discontinuities at the interface and allows for a diffuse transition between the physical properties from one phase to the other.  Using Cahn-Hilliard equations to track the interface produces several advantages including mass conservation, thermodynamic consistency, and a free-energy-based description of surface tension with a well-established theory from non-equilibrium thermodynamics \citep{ Jacqmin1996, Jacqmin2000}.  However, numerical schemes need to be carefully designed to allow the discrete numerical solutions of these CHNS models to inherit these continuous properties.
In particular, CHNS models that use a volume averaged mixture velocity are attractive as they ensure a divergence-free mixture velocity, which, crucially, allows numerical schemes for such CHNS models to be designed by appealing to the extensive literature on incompressible single phase flows (see~\citet{Volker2016}). Therefore, in this work, we continue to use a volume averaged mixture velocity for numerical development\footnote{CHNS models with mass averaged mixture velocity have also been explored in the literature~\citep{Guo2017,Shokrpour2018}.}.     	
	
We previously developed an energy-stable, fully-coupled, second order scheme in~\citet{Khanwale2021}. This scheme uses an equal order interpolation for pressure and velocity and uses a pressure stabilization based on the variational multi-scale method. While the approach of \cite{Khanwale2021} is well-suited for systems affording naturally large timesteps (or where steady state solutions are desired), the coupled non-linear solves become expensive for turbulent systems, especially when small time steps are naturally required. Moreover, the analysis in \cite{Khanwale2021} suggests that a fully-coupled pressure stabilized approach requires very careful design of preconditioners for efficient execution of large scale production simulations.  To address these issues 
we seek to improve the results of~\citep{ Khanwale2021} in three key aspects. 
\begin{enumerate}
\item \textbf{Projection scheme for decoupled pressure solve:} We modify the fully coupled-scheme presented in~\citet{Khanwale2021}
by decoupling the pressure equation from the velocity equations using a projection method; this is done in a manner that preserves second-order accuracy, energy-stability, and mass conservation. In conjunction with a block solution scheme, this results in elliptic (positive semi-definite) operators.  Additionally, a projection variant of VMS method to bypass the discrete inf-sup condition is presented.  
%
%
\item \textbf{Improved octree-based adaptive mesh:} We improve upon the octree based meshing framework presented in ~\citet{ Khanwale2021} through the use of a mesh-free $kD$ tree construction. This new approach
yields improved parallel performance~\citep{ishii2019solving, saurabh2021scalable} with a smaller memory footprint, and provides support for simulating on non-cubic domains via incomplete octrees.

\item \textbf{Scaling and speedup comparison:} We present a detailed scaling and performance analysis of the projection method on up to 7000 processors and compare it to the fully-coupled method presented in~\citep{ Khanwale2021} to assess relative speedup.  
\end{enumerate}
\subsection{Second-order energy-stable scheme projection scheme}
	We deploy our numerical method on adaptive meshes (see~\cref{subsec:octree_intro}), which can result in very restrictive time-steps for explicit schemes (like in~\citet{Badalassi2003}). This is in part due to the viscous and surface tension terms in Navier-Stokes, even if the Cahn-Hilliard equations are discretized using implicit or semi-implicit linearized methods~\citep{Kim2004,Shen2010a,Shen2018}. Unlike other related works \citet{Badalassi2003, Kim2004, Yue2004}, we develop a numerical method for a thermodynamically consistent CH-NS model~\citep{Abels2012}, which allows for unequal densities. This results in a variable coefficient pressure Poisson system  that is carefully constructed such that we can use state-of-the-art Algebraic Multigrid solvers from \texttt{PETSc}.

There have been a number of attempts to use projection-based approaches to model CHNS models~\citep{Shen2010a, Shen2010b, Dong2012, Shen2015, Chen2016, Dong2016, Guo2017, Zhu2019}.  \citet{Shen2010a, Shen2010b, Shen2015}~used a block-solver strategy with linearized time-schemes that reduce their discretization to a sequence of elliptic equations for the velocity and phase fields; however, in these papers 
the numerical methods were limited to first order provably energy-stable schemes. An upside of the first order nature of these schemes is the need to solve each block only once.  However, the velocity used to advect the phase field in~\citep{Shen2010a, Shen2010b, Shen2015} is not guaranteed to be solenoidal, and needs to be modified to maintain energy-stability. A simplifying assumption made in~\citep{Shen2010a, Shen2010b, Shen2015} was to use a constant coefficient pressure Poisson equation\footnote{The Poisson equation was made constant coefficient by replacing the variable specific density by the minimum of the specific densities of the two fluids.}. This is an elegant simplification, particularly for cases where the system behavior is not very sensitive to the density ratio\footnote{As an example, the dynamics in the case of a rising bubble does not change once the density ratio is increased beyond a certain asymptotic value.}. In this work we relax this assumption -- with an eye on applications where the density ratio critically determines dynamics
-- and use the mixture density resulting in a variable coefficient pressure Poisson equation. We use a well-tuned Algebraic Multigrid (AMG) method to efficiently solve this system.

\citet{Zhu2019} extended stabilized linearization of Cahn-Hilliard presented in~\citep{Shen2010a, Shen2010b, Shen2015} to a Scalar Auxiliary Variable (SAV) linearized Cahn-Hilliard time-scheme.  The stabilized and SAV formulations for developing linear semi-implicit time schemes for the Cahn-Hilliard part of the CHNS models are very effective; however, they introduce tight time-step requirements. Instead of trying to linearize the Cahn-Hilliard equations, we use a fully implicit non-linear time-scheme similar to~\citet{Han2015} that allows us to take larger timesteps. This choice is a trade-off between time-step restrictions and complexity of each time step -- both of which impact the total time-to-solve. Interestingly, we find that the fully-implicit non-linear scheme for the Cahn-Hilliard equations reduces to a one step Newton iteration for smaller timesteps, making it computationally equivalent to linear formulations.  

\textit{\textbf{Other related work}}: \citet{Han2015}~used a block-iterative strategy with an energy-stable time scheme but with a non-linear scheme for Cahn-Hilliard.  However, the time scheme in \citet{Han2015} is limited to the equal density case; and solenoidality of the velocity used for the advective phase field is not guaranteed. \citet{ Guo2017}~recently reported a detailed analysis for a mass-averaged mixture velocity CHNS system using a fully coupled strategy using both projection and pressure coupled methods. While we do not include a rigorous proof of energy stability (we defer this to a subsequent publication), we numerically confirm energy-stability for a variety of canonical benchmarks\footnote{We note that the current method is constructed by decoupling pressure and velocity in the provably energy-stable method of~\citet{Khanwale2021}.}.  
\subsection{VMS-based stabilization for conforming Galerkin elements}
Generally projection methods enforce solenoidality of the mixture velocity through a Helmholtz-Hodge decomposition step\citep{Guermond2006}. When we take the divergence of the Helmholtz-Hodge decomposition step we obtain the pressure Poisson equation used to update the pressure. When using continuous Galerkin finite elements and equal order polynomial basis functions for velocity and pressure, the discrete inf-sup condition~\citep{Volker2016} is not satisfied, which can lead to checkerboard instabilities for pressure when solving the pressure Poisson step.  Therefore, a pressure stabilization approach is necessary even when using the projection approach.  However, continuous Galerkin methods are attractive as they are comparatively easy to implement with adaptive octree based meshes~\citep{ishii2019solving, Xu2020}. To circumvent the inf-sup condition and stabilize the pressure Poisson equation we use a residual based VMS~\citep{ Hughes2000, Bazilevs2007, Ahmed2017} approach. Moreover, the VMS approach allows for LES-type modeling of the flow physics as the mesh is coarsened away from the interface.    
\subsection{Scalable octree-based adaptive mesh generation} 
\label{subsec:octree_intro}
Adaptive spatial discretizations are popular in computational science~\citep{ Coupez2013, Hachem2013, Hachem2016} to improve efficiency and resolution quality. In several large-scale applications~\citep{ ishii2019solving, mantle, Fernando2018_GR}, the feasibility of high-fidelity simulations on modern supercomputers is primarily due to the localized resolution of adaptive discretizations. There are computational challenges to achieve spatial adaptivity in the distributed-memory setting, such as load-balancing, low-cost mesh generation, and mesh-partitioning. The scalability of the algorithms used for mesh generation and partitioning is crucial, especially when the problem requires frequent re-meshing. 
This is only possible when the overhead of solving problems on an adaptive mesh are significantly lower that the savings achieved from the reduction of problem-size.
Octrees are widely used in the community~\citep{ Sundar2008, BursteddeWilcoxGhattas11, Fernando2018_GR, ishii2019solving} due to their simplicity and their extreme parallel scalability.

In this work we employ an updated version of a parallel octree library, called \dendroKT, that provides high-quality space-adaptive resolution along with efficient mesh generation, partitioning, and traversal methods.
The \dendroKT~framework is a freely available open-source library. The \dendroKT~library supports 2D, 3D, and 4D trees, and has been used to manage spacetime-adaptive discretizations for solving time-dependent PDEs~\citep{ishii2019solving}. A distinguishing feature of \dendroKT~is the elimination of the element-to-nodal maps, instead giving access to nodal data at the finite elements using top-down and bottom-up traversals. The benefits of this are minimizing data footprint, avoiding indirect memory accesses, and avoiding computing complex topological maps for complex domains. Additionally, \dendroKT~efficiently manages octrees over non-isotropic carved-out domains~\citep{saurabh2021scalable}. The methods of mesh-generation, partitioning, and traversal are detailed in~\cref{sec:octree_mesh}.

\section{Governing equations}
\label{sec:governing_equations}

Consider a bounded domain $\Omega \subset \mathbb{R}^d$, for $d = 2,3$ containing two immiscible fluids, and the time interval $[0, T]$. Let $\rho_{+}$ ($\eta_{+}$ ) and $\rho_{-}$ ($\eta_{-}$) denote the specific density (viscosity) of the two phases. Let the phase field function, $\phi$, be the variable  that tracks the location of the phases and varies smoothly between $+1$ to $-1$.  We define a non-dimensional density, $\rho(\phi)$, such that 
\begin{equation}
\rho(\phi) = \alpha\phi + \beta, \quad
\text{where} \quad \alpha = \frac{\rho_+ - \;\rho_-}{2\rho_+} 
\quad \text{and} \quad \beta = \frac{\rho_+ + \;\rho_-}{2\rho_+}.
\end{equation}
Note that our non-dimensional form uses the specific density of fluid 1 as the non-dimensionalizing density. Similarly, the non-dimensional viscosity is defined as
\begin{equation}
\eta(\phi) = \gamma\phi + \xi, \quad \text{where} \quad 
\gamma = \frac{\eta_+ - \;\eta_-}{2\eta_+} \quad \text{and}
\quad \xi = \frac{\eta_+ + \;\eta_-}{2\eta_+}.
\end{equation}

Let, $\vec{v}$ be the volume averaged mixture velocity, $p$ the volume averaged pressure, $\phi$ the phase field (interface tracking variable), and $\mu$ the chemical potential.  Then the governing equations in their non-dimensional form are as follows:
\begin{align}
\begin{split}
	\text{Momentum Eqns:} & \quad \pd{\left(\rho(\phi) v_i\right)}{t} + \pd{\left(\rho(\phi)v_iv_j\right)}{x_j} + \frac{1}{Pe}\pd{\left(J_jv_i\right)}{x_j} +\frac{Cn}{We} \pd{}{x_j}\left({\pd{\phi}{x_i}\pd{\phi}{x_j}}\right) \\
	& \quad \quad \quad + 
	\frac{1}{We}\pd{p}{x_i} - \frac{1}{Re}\pd{}{x_j}\left({\eta(\phi)\pd{v_i}{x_j}}\right) - \frac{\rho(\phi)\hat{{g_i}}}{Fr} = 0, \label{eqn:nav_stokes} 
	\end{split} \\
	\text{Thermo Consistency:} & \quad J_i = \frac{\left(\rho_- - \rho_+\right)}{2\;\rho_+ Cn} \, m(\phi) 
	\, \pd{\mu}{x_i},\\
	\text{Solenoidality:} & \quad \pd{v_i}{x_i} = 0, \label{eqn:solenoidality}\\
	\text{Continuity:} & \quad \pd{\rho(\phi)}{t} + \pd{\left(\rho(\phi)v_i\right)}{x_i}+
	\frac{1}{Pe} \pd{J_i}{x_i} = 0, \label{eqn:cont}\\
	\text{Chemical Potential:} & \quad \mu = \psi'(\phi) - Cn^2 \pd{}{x_i}\left({\pd{\phi}{x_i}}\right)  ,\label{eqn:mu_eqn} 
	\\ 
	\text{Cahn-Hilliard Eqn:} & \quad \pd{\phi}{t} + \pd{\left(v_i \phi\right)}{x_i} - \frac{1}{PeCn} \pd{}{x_i}\left(m(\phi){\pd{\mu}{x_i}}\right) = 0. 
	\label{eqn:phi_eqn}
\end{align}
Non-dimensional mobility $m(\phi)$ is assumed to be a constant value of one. $\hat{\vec{g}}$ is a unit vector defined as $\left(0, -1, 0\right)$ denoting the direction of gravity.  We use the polynomial form of the free energy density $\psi(\phi(\vec{x}))$ defined as follows:
\begin{align}
	\psi(\phi) = \frac{1}{4}\left( \phi^2 - 1 \right)^2 \qquad \text{with} \qquad
	\psi'(\phi) = \phi^3 - \phi.
	\label{eqn:free_energy}
\end{align}

\subsection{Details of non-dimensionalization} Let $u_{r}$ and $L_r$ denoting the reference velocity and length, $m_{r}$ is the reference mobility, $\sigma$ is the scaling interfacial tension, $\nu_{r} = \eta_{+}/\rho_{+}$, $\varepsilon$ is the interface thickness, $g$ is gravitational acceleration. Then relevant non-dimensional parameters are as follows: Peclet, $Pe = \frac{u_{r} L_{r}^2}{m_{r}\sigma}$; Reynolds, $Re = \frac{u_{r} L_{r}}{\nu_{r}}$; Weber, $We = \frac{\rho_{r}u_{r}^2 L_{r}}{\sigma}$; Cahn, $Cn = \frac{\varepsilon}{L_{r}}$; and Froude, $Fr = \frac{u_{r}^2}{gL_{r}}$, with $u_{r}$ and $L_r$ denoting the reference velocity and length, respectively. 
We use scaling relation suggested by~\citep{ Magaletti2013} of $1/Pe = 3 Cn^2$ to calculated appropriate $Pe$ for a chosen $Cn$\footnote{See Remark 1 in~\citet{Khanwale2021}.}.   
%
%
%

\subsection{Energy law} The system of equations \cref{eqn:nav_stokes} -- \cref{eqn:phi_eqn} has a dissipative law given by: 
\begin{equation}
\d{E_{\text{tot}}}{t} = -\frac{1}{Re}  \int_{\Omega} \frac{\eta(\phi)}{2} \norm{\nabla \vec{v}}_F^2 \mathrm{d}\vec{x} - \frac{Cn}{We} \int_{\Omega}m(\phi) \norm{\nabla \mu}^2  \mathrm{d}\vec{x},
\end{equation}
where the total energy is 
\begin{equation}
E_{\text{tot}}(\vec{v},\phi, t) = \int_{\Omega}\frac{1}{2}\rho(\phi) \norm{\vec{v}}^2 \mathrm{d}\vec{x} + \frac{1}{CnWe}\int_{\Omega} \left(\psi(\phi) + \frac{Cn^2}{2} \norm{\nabla\phi}^2 + \frac{1}{Fr} \rho(\phi) y \right) \mathrm{d}\vec{x}.
\label{eqn:energy_functional}
\end{equation}
The norms used in the above expression are the Euclidean vector norm and the Frobenius matrix norm:
\begin{equation}
\norm{\vec{v}}^2 := \sum_i \abs{v_i}^2 \qquad \text{and} \qquad 
\norm{\nabla\vec{v}}^2_F := \sum_i \sum_j \abs{\frac{\partial v_i}{\partial x_j}}^2.
\end{equation}
This energy functional does not consider energy fluxes into/out of the boundaries. This scenario is representative of closed systems, systems with neutral contact line dynamics, or generally boundary conditions that do not contribute to energy.

\section{Numerical method and its properties}
\label{sec:numerical_tecniques}

We utilize a semi-implicit time-marching scheme for Navier Stokes (\cref{eqn:nav_stokes}) with projection to decouple the pressure computation.  Similar to \citet{Khanwale2021} we use a fully implicit time marching scheme for the Cahn-Hilliard equations~\crefrange{eqn:mu_eqn}{eqn:phi_eqn}.  This strategy delivers second order accuracy accuracy and energy-stability, which we verify using numerical experiments, for larger time-steps that are comparable to the fully implicit scheme in~\citet{Khanwale2021}. 

Let $\delta t$ be a time-step and let $t^k := k \delta t$. We define the following time-averages: 
\begin{gather}
\label{eqn:averaging1}
\begin{split}
\widetilde{\vec{v}}^{k} := \frac{\vec{u}^{k} + \vec{v}^{k+1}}{2}, \quad \widetilde{\vec{u}}^{k} :=& \frac{\vec{u}^{k} + \vec{u}^{k+1}}{2}, \quad \widehat{\vec{v}}^{k} := \frac{3\vec{u}^{k} + \vec{u}^{k-1}}{2}, \quad \tp^{k} := \frac{{p}^{k+1}+{p}^{k}}{2}, \\
\tphi^{k} :=& \frac{{\phi}^{k+1} + {\phi}^{k}}{2}, \quad \text{and} \quad \tmu^{k} := \frac{{\mu}^{k+1} + {\mu}^{k}}{2},
\end{split}
\end{gather}
as well as the following function evaluations:
\begin{equation}
\label{eqn:psi_ave_def}
\begin{split}
	\tpsi^k := \psi\left( \tphi^k \right), \quad 
	\tpsi'^{k} :=& \psi'\left( \tphi^k \right), \quad 
	\trhok := \rho\left(\tphi^k\right), \quad
	\rhokplusOne := \rho\left(\phi^{k+1}\right), \\
	\rhok :=& \rho\left(\phi^{k}\right), \quad \text{and} \quad
	\tetak := \eta\left(\tphi^k\right).
\end{split}
\end{equation}
Using these temporal values, we define our time-discretized weak form of
the Cahn-Hilliard Navier-Stokes (CNHS) equations. 

\begin{definition}[Variational form ]
	Let $(\cdot,\cdot)$ be the standard $L^2$ inner product. We state the time-discrete variational problem as follows: find $\vec{v}^{k+1}(\vec{x}), \vec{u}^{k+1}(\vec{x}) \in \vec{H}_0^1(\Omega)$, $p^{k+1}(\vec{x})$, $\phi^{k+1}(\vec{x})$, $\mu^{k+1}(\vec{x})$ $\in {H}^1(\Omega)$ such that
	\begin{align}
	\text{Cahn-Hilliard Eqn:} & \quad \left(q, \frac{\phi^{k+1} - \phi^k}{\delta t} \right) -
	\left(\pd{q}{x_i}, \, \tui^{k} \tphi^{k} \right) + \frac{1}{PeCn} \left(\pd{q}{x_i}, \, {\pd{\tmu^{k}}{x_i}} \right) = 0,
	\label{eqn:phi_eqn_var_semi_disc}\\
	\text{Chemical Potential:} & \quad 
	-\left(q,\tmu^{k}\right) + \left(q, \tpsi'^{k} \right) + Cn^2 \left(\pd{q}{x_i}, \, {\pd{\tphi^{k}}{x_i}}\right)   = 0, \label{eqn:mu_eqn_var_semi_disc}\\
	\begin{split}
	\text{Velocity prediction:} & \quad \left(w_i, \, \trhok \, \frac{v^{k+1}_i - u^k_i}{\delta t}\right) 
	+ \left(w_i, \, \trhok \, \huj^{k} \, \pd{\tvi^{k}}{x_j}\right) 
	+ \frac{1}{Pe}\left(w_i, \, \hJj^{k} \, \pd{\tvi^{k}}{x_j}\right) \\ & \quad
	- \frac{Cn}{We} \left(\pd{w_i}{x_j}, \, {\pd{\tphi^{k}}{x_i}\pd{\tphi^{k}}{x_j}}\right)  
	+ \frac{1}{We}\left(w_i, \, \pd{p^{k}}{x_i} \right) \\ & \quad 
	+ \frac{1}{Re}\left(\pd{w_i}{x_j}, \,\tetak {\pd{\tvi^{k}}{x_j}} \right) -
	\left(w_i,\frac{\trhok \, \hat{g_i}}{Fr}\right) = 0, 
	\label{eqn:nav_stokes_var_semi_disc}
	\end{split} \\ 
	\text{Thermo Consistency:} & \quad \tJi^{k} = \frac{\left(\rho_- - \rho_+ \right)}{2\;\rho_+ Cn} \, \pd{\tmu^{k}}{x_i}, \label{eqn:thermo_consistency_semi_disc} \\
	\text{Projection:} & \quad \left(w_i, \, \trhok \, \frac{u^{k+1}_i - v^{k+1}_i}{\delta t}\right) 
	+ \frac{1}{2 We}\left(w_i, \, \pd{\left(p^{k+1} - p^{k}\right)}{x_i} \right) = 0, \label{eqn:projection_semi_disc} \\
	\text{Solenoidality:} & \quad 
	\left(q, \, \pd{u^{k+1}_{i}}{x_i}\right) = 0, 
	\quad \left(q, \, \pd{u^{k}_{i}}{x_i}\right) = 0, \quad \left(q, \, \pd{u^{k-1}_{i}}{x_i}\right) = 0,
	\label{eqn:cont_var_semi_disc}\\
	\text{Continuity:} & \quad
	\left(\frac{\rhokplusOne - \rhok}{\delta t}, \, q\right) + \left(\pd{\left(\trhok \huj^k \right)}{x_j}, \, q\right)-
	\left(\frac{1}{Pe} \hJj, \, \pd{q}{x_j}\right) = 0,
	\label{eqn:cont_consv_var_semi_disc}
	\end{align}
	$\forall \vec{w} \in \vec{H}^1_0(\Omega)$, $\forall q \in H^1(\Omega)$, given $\vec{u}^{k}, \vec{u}^{k-1} \in \vec{H}_0^1(\Omega)$, and $\phi^{k},\mu^{k} \in H^1(\Omega)$.
	\label{defn:variational_form_sem_disc}
\end{definition}
We redefine the pressure by absorbing the coefficient $1/We$ in its definition. Using the solenoidality of $\vec{u}^{k+1}$, \cref{eqn:projection_semi_disc} is implemented in a two step strategy as follows.
\begin{definition}
	Let $(\cdot,\cdot)$ be the standard $L^2$ inner product. The time-discretized projection variational
	problem can be stated as follows: find $\vec{u}^{k+1}(\vec{x}) \in \vec{H}_0^1(\Omega)$, $p^{k+1}(\vec{x})$ $\in {H}^1(\Omega)$ such that
	\begin{align}
	\text{Pressure Poisson:}& \qquad \left(\pd{w_i}{x_i}, \, \left(\frac{1}{\trhok}\pd{p^{k+1}}{x_i}\right)\;\right) = 
	-\frac{2}{\dt} \left(w_i, \pd{v_i^{k+1}}{x_i}\right) + \left(\pd{w_i}{x_i}, \, \left(\frac{1}{\trhok}\pd{p^{k}}{x_i}\right)\;\right), \label{eqn:press_poisson_semi_disc}\\
	\text{Velocity update:} & \qquad \left(w_i, \, \trhok u_i^{k+1}\right) + \frac{\dt}{2} \left(w_i, \, \pd{p^{k+1}}{x_i} \right) = 
	\left(w_i, \, \trhok v_i^{k+1}\right) + \frac{\dt}{2}\left(w_i, \,\pd{p^{k}}{x_i}\right), \label{eqn:vel_update_semi_disc}
	\end{align}
	$\forall \vec{w} \in \vec{H}^1_0(\Omega)$, given $\vec{v}^{n+1} \in \vec{H}_0^1(\Omega)$, and $p^{k} \in H^1(\Omega)$.
	\label{defn:hp_semi_disc}
\end{definition}
Here the pressure Poisson equation is derived by dividing~\cref{eqn:projection_semi_disc} by $\trhok$ and then taking the divergence.  Once the current pressure is known, we can update the velocity using~\cref{eqn:vel_update_semi_disc}.
\begin{remark}
	In~\cref{defn:variational_form_sem_disc} and \cref{defn:hp_semi_disc} for the momentum equations and projection equations, the boundary terms in the variational form are zero because the velocity and the basis functions live in $\vec{H}^1_0(\Omega)$. Also we use the no flux boundary condition for $\phi$ and $\mu$, which makes the boundary terms zero (i.e.,  $\left(q, {\pd{\tphi^{k}}{x_i}} \hat{n}_i\right)=0$ and $\left(q, {\pd{\mu^{k}}{x_i}} \hat{n}_i\right)=0$).  This choice of boundary conditions ensure that there are no boundary terms when the equations are weakened in the variational form.  
	Other boundary conditions suitable for energy stability analysis using this particular energy functional include closed systems with neutral contact line dynamics. Additionally, a slight generalization of these boundary conditions to periodic conditions is also suitable for an energy-stability analysis as there are no boundary terms for this case.  
	The numerical examples in the paper use these boundary conditions unless explicitly noted otherwise.
\end{remark}

\begin{remark} While $\phi \in [-1, 1]$ in the continuous equations, the discrete $\phi$ may violate these bounds. These bound violations may not change the dynamics of $\phi$  adversely, but they could lose the strict positivity of some quantities which depend on $\phi$  (e.g., mixture density $\rho(\phi)$ and viscosity $\eta(\phi)$). This effect is especially significant for high density and viscosity contrasts.
	We fix this issue by saturation scaling (i.e., we pull back the value of $\phi$ only for the calculation of density and viscosity). In particular, we define the following $\phi^*$ for the mixture density and viscosity calculations: 
	\begin{equation}
	\phi^* := 
	\begin{cases}
	\phi, &\quad \text{if} \;\; \abs{\phi} \leq 1, \\
	\mathrm{sign}(\phi), &\quad \text{otherwise.} 
	\end{cases}
	\label{eqn:phi_for_density}
	\end{equation} 
	\label{rmk:phi_pullback}
\end{remark}

\begin{remark}
	It is important to note here that we are using a block iteration technique.  Therefore, $\phi$ and $\mu$ are known when we are solving the momentum equations, and $v_i$ is known when we are solving the advective Cahn-Hilliard equation.  Also note that the mixture velocity used for advecting the phase field in \cref{eqn:phi_eqn_var_semi_disc} is $\tui$, which is weakly solenoidal instead of $\tvi$ which is not.  We design the solve strategy such that this is always satisfied. 
\end{remark}
We now state a couple of important properties of the scheme. 
\begin{proposition}[Mass conservation]
	The scheme of~\cref{eqn:phi_eqn_var_semi_disc} -- \cref{eqn:mu_eqn_var_semi_disc} with the following boundary conditions: 
	\begin{equation}
	\pd{\tmu}{x_i} \hat{n}_i\Bigl\rvert_{\partial \Omega} = 0, \quad \pd{\tphi}{x_i} \hat{n}_i\Bigl\rvert_{\partial \Omega} = 0, \quad \widetilde{\vec{u}}^{k}\Bigl\rvert_{\partial \Omega} = \vec{0},
	\end{equation}
	where $\hat{\vec{n}}$ is the outward normal to the boundary $\partial \Omega$,
	is globally mass conservative: 
	\begin{equation}
	\int_{\Omega} \phi^{k+1} \, \mathrm{d}\vec{x}  = \int_{\Omega} \phi^{k} \, \mathrm{d}\vec{x}.
	\end{equation}
	\label{prop:mass_conservation}
\end{proposition}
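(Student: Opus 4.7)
The plan is to test the discrete Cahn-Hilliard variational equation \cref{eqn:phi_eqn_var_semi_disc} with the constant test function $q \equiv 1$. Since $\Omega$ is bounded, the constant function lies in $H^1(\Omega)$, so it is an admissible choice. Notice that the Cahn-Hilliard step \cref{eqn:phi_eqn_var_semi_disc} is written in the space $H^1(\Omega)$ (not $H^1_0$), so no zero-trace restriction rules out $q\equiv 1$.

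Once $q\equiv 1$ is substituted, every term in \cref{eqn:phi_eqn_var_semi_disc} that carries a spatial derivative of the test function drops out: the advective contribution $-(\partial q/\partial x_i,\; \tui^{k}\tphi^{k})$ and the mobility contribution $(1/(PeCn))(\partial q/\partial x_i,\; \partial \tmu^{k}/\partial x_i)$ both vanish identically. Only the temporal term survives, giving
\begin{equation*}
\left(1,\; \frac{\phi^{k+1}-\phi^{k}}{\delta t}\right) \;=\; 0,
\end{equation*}
which, after multiplying by $\delta t$ and recognizing the $L^2$ inner product as an integral over $\Omega$, yields exactly $\int_{\Omega}\phi^{k+1}\,\mathrm{d}\vec{x} = \int_{\Omega}\phi^{k}\,\mathrm{d}\vec{x}$, the claimed global mass conservation.

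It is worth noting explicitly how the hypothesized boundary conditions enter: they were already absorbed when integrating the original strong equations by parts to arrive at the weak form in \cref{defn:variational_form_sem_disc}. Specifically, $\widetilde{\vec u}^{k}|_{\partial\Omega}=\vec{0}$ eliminates the boundary flux from the advective term, and the no-flux condition $\partial \tmu/\partial x_i\, \hat{n}_i=0$ eliminates the diffusive boundary flux. Without these conditions, testing with $q\equiv 1$ would leave behind surface integrals that would contribute non-zero mass flux across $\partial\Omega$, so the boundary assumptions play an essential but implicit role.

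There is no substantial obstacle here: the result is essentially a consequence of the fact that the scheme was intentionally written so that the Cahn-Hilliard weak form places all spatial operators on the test function. The one thing worth flagging in the written proof is that the mixture velocity appearing in the advection term is $\tui^{k}$ (the weakly solenoidal update), not $\tvi^{k}$; this is consistent with the remark already made in the text, and ensures that no further conditions beyond those listed are required for the conservation argument to close.
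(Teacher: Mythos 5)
Your proposal is correct and follows exactly the route the paper itself takes: substitute the constant test function $q\equiv 1$ into~\cref{eqn:phi_eqn_var_semi_disc}, observe that every term carrying $\partial q/\partial x_i$ vanishes, and note that the stated boundary conditions have already eliminated the surface fluxes in passing to the weak form. Your additional remarks on the role of the boundary conditions and on $\tui^{k}$ versus $\tvi^{k}$ are accurate elaborations of the paper's one-line argument, not a different approach.
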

\begin{proof}
	The proposition can be easily proved using the test function in~\cref{eqn:phi_eqn_var_semi_disc} as 1, and subsequently using the divergence theorem along with the given boundary conditions prescribed by the function spaces. 
\end{proof} 
We verify this claim numerically in~\cref{subsec:manfactured_soln_result,subsec:single_rising_drop_2D}.  We recall the following result from~\citet{Khanwale2020}.  
\begin{lemma}[Weak equivalence of forcing]
	The forcing term due to Cahn-Hilliard in the momentum equation,~\cref{eqn:nav_stokes_var_semi_disc}, with the test function $w_i = \delta t \, \tvi^k$, becomes
	\begin{equation}
	\frac{Cn}{We}\left( \pd{}{x_j}\left({\pd{\tphi^k}{x_i}\pd{\tphi^{k}}{x_j}}\right), \delta t \, \tvi^k\right) = \frac{\delta t}{WeCn}\left({\tphi}^k\pd{\tmu^{k}}{x_i},\tvi^k\right) 
	+ \frac{\delta t~Cn}{We} \left(\frac{\tmu^{k} \, \tphi^{k}}{Cn^2}  -
	\frac{1}{2} \pd{\tphi^{k}}{x_j} \pd{\tphi^{k}}{x_j} - \frac{\tpsi}{Cn^2} , \, \pd{\tvi^k}{x_i}\right),
	\label{eqn:lemma_weak_equiv_forcing}
	\end{equation}
	$\forall \;\; \tphi^k$, $\tmu^{k} \in H^1(\Omega)$, and $\forall \;\; \widetilde{\vec{v}}^k \in  \vec{H}_{0}^1(\Omega)$, where $\vec{v}^k, \vec{v}^{k+1}, p^k, p^{k+1}, \phi^k, \phi^{k+1}, \mu^{k},\mu^{k+1}, $ satisfy \cref{eqn:nav_stokes_var_semi_disc} -- \cref{eqn:phi_eqn_var_semi_disc}.
	\label{lem:forcing_equivalent}
\end{lemma}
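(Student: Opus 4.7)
The plan is to start from the strong-form expression, use the product rule to split the capillary divergence, invoke the chemical potential equation to eliminate the Laplacian, and then integrate by parts against $\delta t\,\tvi^k$, with the no-slip condition $\tvi^k \in \vec{H}_0^1(\Omega)$ annihilating every boundary term. The target decomposition is to end up with a single non-divergence piece $\tphi^k\,\pd{\tmu^k}{x_i}$ paired against $\tvi^k$ (which will produce the first summand on the RHS) and three divergence pieces whose integrations by parts assemble into the bracketed summand.

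First I apply the product rule to obtain
\begin{equation*}
\pd{}{x_j}\!\left(\pd{\tphi^k}{x_i}\pd{\tphi^k}{x_j}\right) = \Delta \tphi^k\,\pd{\tphi^k}{x_i} + \frac{1}{2}\pd{}{x_i}\!\left(\pd{\tphi^k}{x_j}\pd{\tphi^k}{x_j}\right),
\end{equation*}
where the second group uses $\pd{\tphi^k}{x_j}\frac{\partial^2 \tphi^k}{\partial x_i\,\partial x_j} = \tfrac{1}{2}\pd{}{x_i}\!\left(\pd{\tphi^k}{x_j}\pd{\tphi^k}{x_j}\right)$. Next, I invoke~\cref{eqn:mu_eqn_var_semi_disc} in its strong form, $Cn^2 \Delta \tphi^k = \tpsi'^k - \tmu^k$, and apply the chain rule $\tpsi'^k\,\pd{\tphi^k}{x_i} = \pd{\tpsi^k}{x_i}$ together with the product rule $\tmu^k\,\pd{\tphi^k}{x_i} = \pd{(\tmu^k \tphi^k)}{x_i} - \tphi^k\,\pd{\tmu^k}{x_i}$ to rewrite
\begin{equation*}
\Delta \tphi^k\,\pd{\tphi^k}{x_i} = \frac{1}{Cn^2}\!\left(\pd{\tpsi^k}{x_i} - \pd{(\tmu^k \tphi^k)}{x_i} + \tphi^k\,\pd{\tmu^k}{x_i}\right).
\end{equation*}
The lone non-divergence piece $\tphi^k\,\pd{\tmu^k}{x_i}$ is exactly the term that will produce the advective summand on the RHS once paired with $\tvi^k$.

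Then I take the $L^2$ inner product of the combined identity with $\delta t\,\tvi^k$ and multiply through by $Cn/We$. Each of the three divergence-form pieces $\pd{\tpsi^k}{x_i}$, $\pd{(\tmu^k \tphi^k)}{x_i}$, and $\tfrac{1}{2}\pd{}{x_i}\!\left(\pd{\tphi^k}{x_j}\pd{\tphi^k}{x_j}\right)$ is integrated by parts against $\tvi^k$; all boundary contributions vanish since $\tvi^k \in \vec{H}_0^1(\Omega)$, and after collecting the common factor $1/Cn^2$ and tallying the three sign flips, the transferred contributions assemble into $\bigl(\tmu^k \tphi^k/Cn^2 - \tfrac{1}{2}\pd{\tphi^k}{x_j}\pd{\tphi^k}{x_j} - \tpsi^k/Cn^2,\,\pd{\tvi^k}{x_i}\bigr)$ weighted by $Cn\,\delta t/We$, while the advective piece delivers $(\delta t/(WeCn))\bigl(\tphi^k\,\pd{\tmu^k}{x_i},\tvi^k\bigr)$. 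Summing these two contributions reproduces~\cref{eqn:lemma_weak_equiv_forcing} exactly.

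The only delicate point is regularity: the pointwise Step~1 identity presumes enough smoothness of $\tphi^k$ for $\Delta \tphi^k$ to be defined classically, whereas the variational framework merely provides $H^1$. In the finite-element realization this is not a genuine obstacle because the cG(1) iterates are smooth elementwise and the identity holds on each element; the purely weak reinterpretation, if needed, is obtained by testing~\cref{eqn:mu_eqn_var_semi_disc} with $q = \tphi^k\,\tvi^k$ and never invoking second derivatives. The remaining work is bookkeeping on divergence identities and on the three IBP sign flips, which is where I would slow down and be careful, but where I expect no surprises.
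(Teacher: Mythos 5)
Your derivation is correct and is essentially the standard argument: the paper itself recalls this lemma from \citet{Khanwale2020} without reproving it, and the proof there proceeds exactly as you do — split $\pd{}{x_j}\left(\pd{\tphi^k}{x_i}\pd{\tphi^k}{x_j}\right)$ into $\Delta\tphi^k\,\pd{\tphi^k}{x_i}+\tfrac12\pd{}{x_i}\left(\norm{\nabla\tphi^k}^2\right)$, eliminate $\Delta\tphi^k$ via $Cn^2\Delta\tphi^k=\tpsi'^{k}-\tmu^k$, rewrite $\tmu^k\pd{\tphi^k}{x_i}$ and $\tpsi'^{k}\pd{\tphi^k}{x_i}$ as exact derivatives plus the residual $\tphi^k\pd{\tmu^k}{x_i}$, and integrate the divergence-form pieces by parts against $\delta t\,\tvi^k\in\vec{H}^1_0(\Omega)$. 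Your coefficient bookkeeping ($Cn/We\cdot Cn^{-2}=1/(WeCn)$) and the three sign flips check out, and your regularity caveat is appropriately handled since the weak chemical potential equation with $\tmu^k-\tpsi'^{k}\in L^2$ yields the $H^2$ regularity needed for the pointwise identities.
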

\begin{remark}
	In the literature, some variant of the forcing term ${\tphi}^k\pd{\tmu^{k}}{x_i}$, which is the first term on the right hand side of the expression above, is used effectively to construct semi-implicit schemes~\citep{Shen2010a, Shen2010b, Dong2012, Shen2015, Han2015, Chen2016, Dong2016, Guo2017, Zhu2019}. These forcing terms are equivalent to the thermodynamically consistent forcing term, $\pd{}{x_j}\left(\pd{\tphi^k}{x_i}\pd{\tphi^{k}}{x_j}\right)$, \textit{\textbf{only}} if the test function is divergence free. 
	The usual test functions used for a projection method with conforming Galerkin method are not divergence free. Thus, using a forcing term ${\tphi}^k\pd{\tmu^{k}}{x_i}$ results in methods that do not exhibit appropriate temporal convergence as conservation errors saturate.  We show this effect numerically in panel (b) of~\cref{fig:manufac_temporal_convergence} in~\cref{subsec:manfactured_soln_result}. 
	\label{rem:consv_forcing}
\end{remark}

We solve the spatially discretized version of variational problems in~\cref{defn:variational_form_sem_disc} and \cref{defn:hp_semi_disc} using a block iteration technique, i.e., we treat the velocity prediction equations (\cref{eqn:nav_stokes_var_semi_disc}), pressure Poisson equation (\cref{eqn:press_poisson_semi_disc}), velocity update equations (\cref{eqn:vel_update_semi_disc}), and the Cahn-Hilliard equations (\crefrange{eqn:phi_eqn_var_semi_disc}{eqn:mu_eqn_var_semi_disc}) as distinct sub-problems. Thus, three linear solvers of (1) velocity prediction, (2) pressure Poisson, and (3) velocity update are stacked together with a non-linear solver for Cahn-Hilliard equations inside the time loop. These solvers are each solved twice (two blocks) within every time step to preserve order of accuracy and self-consistency.  See \cref{fig:flowchart_block} for a  flowchart of the approach. We emphasize that a block iterative approach allows us to make the coupling variables from one equation constant in the other during each respective linear/non-linear solve.  For example, for the momentum equation, all the terms depending on $\phi$ (which is solved in the Cahn-Hilliard sub problem) are known. Similarly the mixture velocity used in the Cahn-Hilliard equation solve.  We choose to perform two-block iterations as for the timesteps we choose for accuracy concerns, we achieve convergence in block iteration in two blocks.  

We adopt a strategy here such that the mixture velocity used in the advection of the phase field (\cref{eqn:phi_eqn_var_semi_disc}) is always weakly solenoidal.  This means that the pressure Poisson and velocity update is performed two times as we run the block iteration twice.  This is a robust strategy, especially for the cases of high density ratios where the pressure gradients between the two phases can be high.  Another popular strategy, used for equal density ratios by~\citet{Han2015}, is to iterate between velocity prediction (\cref{eqn:nav_stokes_var_semi_disc}) and Cahn-Hilliard equations (\crefrange{eqn:phi_eqn_var_semi_disc}{eqn:mu_eqn_var_semi_disc}) until a consistency tolerance is satisfied, and then project the velocity to calculate pressure and solenoidal velocity in the last block iteration.  We do not use this strategy even though it is potentially cheaper (only requires pressure Poisson and velocity update to be performed once) due to the fact that we are not using divergence conforming elements unlike~\citet{Han2015} and we simulate large density ratios.

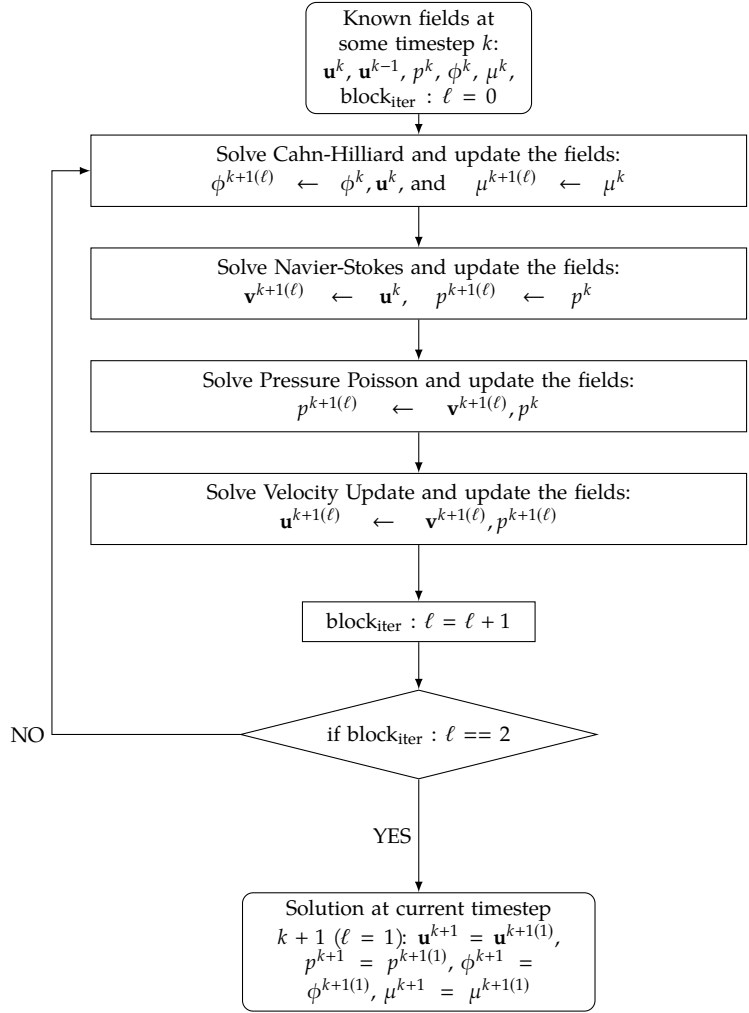
\begin{figure}
	\centering			
	\begin{tikzpicture}[%
	scale=1,transform shape,
	>=latex,              
	start chain=going right,    
	node distance=15mm and 15mm, 
	every join/.style={norm},   
	]
	\scriptsize
	
	\tikzset{
		start/.style={rectangle, draw, text width=3cm, text centered, rounded corners, minimum height=2ex},
		base/.style={draw, on chain, on grid, align=center, minimum height=2ex},
		proc/.style={base, rectangle, inner sep=4pt}, 
		inout/.style={base,trapezium,trapezium left angle=70,trapezium right 
			angle=-70}, 
		decision/.style={base, diamond, aspect = 4.0}, 
		norm/.style={->, draw},
	}
	
	\node [start,text width=10em] (b0) {Known fields at some timestep $k$:  $\vec{u}^{k}$, $\vec{u}^{k-1}$, $p^{k}$, $\phi^{k}$, $\mu^{k}$, \\ $\text{block}_{\text{iter}}: \ell = 0$};
	\node [proc,below =of b0,yshift=0pt,xshift=0pt,text width=30em] (b1) {Solve Cahn-Hilliard and update the fields: \\
		$\phi^{k+1(\ell)} \leftarrow \phi^k,\vec{u}^{k}$, and \quad $\mu^{k+1(\ell)} \leftarrow \mu^k$};
	\node [proc,below =of b1,join,text width=30em] (b2) {Solve Navier-Stokes and update the fields: \\ $\vec{v}^{k+1(\ell)} \leftarrow \vec{u}^k$, \quad $p^{k+1(\ell)} \leftarrow p^k$};
	\node [proc,below =of b2,join,text width=30em] (b3) {Solve Pressure Poisson and update the fields: \\ $p^{k+1(\ell)} \leftarrow \vec{v}^{k+1(\ell)}, p^k$};
	\node [proc,below =of b3,join,text width=30em] (b4) {Solve Velocity Update and update the fields: \\ $\vec{u}^{k+1(\ell)} \leftarrow \vec{v}^{k+1(\ell)}, p^{k+1(\ell)}$};
	\node [proc,below =of b4,yshift=0pt,xshift=0pt,text width=10em] (b5) {$\text{block}_{\text{iter}}: \ell = \ell + 1$};
	\node[decision,below =of b5,yshift=0pt,xshift=0pt,text width=10em](b6) {if $\text{block}_{\text{iter}} : \ell == 2$};
	\node [start,below =of b6,yshift=0pt,xshift=0pt,text width=16em] (b7) {Solution at current timestep $k+1$ ($\ell = 1$):  $\vec{u}^{k+1} = \vec{u}^{k+1(1)}$, $p^{k+1} = p^{k+1(1)}$, $\phi^{k+1} = \phi^{k+1(1)}$, $\mu^{k+1} = \mu^{k+1(1)}$};

	\draw [->] (b0)--(b1);		
	\draw [->] (b4)--(b5);
	\draw [->] (b5) -- (b6);
	\draw [->] (b6.west) -| ++(-2.5,0) node[anchor=east] {NO} |- (b1.west);
	\draw [->] (b6) -- node[anchor=east] {YES}(b7);
	
	\end{tikzpicture}
	\caption{Flowchart for the block iteration technique as described
		in \cref{sec:numerical_tecniques}
		.}
	\label{fig:flowchart_block}		
\end{figure}

\subsection{Spatial discretization and the variational multiscale approach}
\label{subsec:space_scheme}
We discretize the unknowns: 
\begin{equation}
\left( \phi, \, \mu,  \, \vec{v}, \, \vec{u}, \, p \right),
\end{equation}
in space using conforming continuous Galerkin finite elements with piecewise polynomial approximations. However, approximating the velocity, $\vec{v}$, and the pressure, $p$, with the same polynomial order leads to numerical instabilities as this violates the discrete inf-sup condition  or Ladyzhenskaya-Babuska-Brezzi condition (e.g., see \citet[page 31]{ Volker2016}).  Even though the projection method decouples pressure and velocity, our use of equal interpolation order basis functions for pressure and velocity may lead to instabilities.  In order to overcome this difficulty, we use the variational multi-scale (VMS) method~\citep{ hughes2018multiscale}, which adds stabilization terms to the pressure Poisson equation that transform the inf-sup stability condition to a coercivity statement \citep{article:TezMitRayShi92}.  
Additionally, VMS provides a natural leeway into modeling high-Reynolds number flows~\citep{ Bazilevs2007} as it uses a projection based filter to decompose coarse and fine scales.        
  
The VMS approach uses a direct-sum decomposition of the function spaces as follows.  If $\vec{v} \in \vec{V}$, $p \in Q$, and $\phi \in Q$ then we decompose these spaces as: 
\begin{eqnarray}
\vec{V} = \vec{V}^{c} \oplus \vec{V}^f \qquad \text{and} \qquad Q = Q^{c} \oplus Q^f,
\end{eqnarray}
where $\vec{V}^{c}$ and $Q^{c}$ are the finite dimensional cG(r) subspaces of $\vec{V}$ and $Q$, respectively, and the superscript $f$ versions are the complements of the cG(r) subspaces in $\vec{V}$ and $Q$, respectively.
We decompose the velocity as follows: 
\begin{equation}
\vec{v} = \vec{v}^{c} + \vec{v}^{f}, \quad
\end{equation} 
where the {\it coarse scale} components are
  $\vec{v}^{c} \in \vec{V}^{c}$, and the {\it fine scale} components are
   $\vec{v}^{f} \in \vec{V}^f$.  We define a projection operator, $\mathscr{P}:\vec{V} \rightarrow \vec{V}^c$, such that
 $\vec{v}^{c} = \mathscr{P}\{\vec{v}\}$ and $\vec{v}^f = \vec{v} - \mathscr{P}\{\vec{v}\}$.   
Let 
\begin{equation}
\tvi^{c, k} = \frac{v^{c,k+1}_i  + u^{c,k}_i}{2}, \quad \tvi^{f, k} = \frac{v^{f,k+1}_i + u^{f,k}_i}{2}, \quad \text{and} \quad \tvi^{c, k} + \tvi^{f, k} = \tvi^{k}.
\end{equation}
Substituting this decomposition in the original variational form  
\cref{defn:variational_form_sem_disc} yields:
\begin{align}	
\begin{split}	
		\text{Velocity prediction:}& \quad \left(w_i,\trhok\, \frac{\left(v^{c,k+1}_i + v^{f,k+1}_i\right) - \left(u^{c,k}_i + u^{f,k}_i\right) 
		}{\delta t}\right) + \left(w_i, \, \frac{\left(\rhokplusOne - \rhok\right)}{\delta t}\tvi^{f, k}\right)\\ &
		+ \left(w_i, \, \trhok \, \huj^{k} \, \pd{\tvi^{c,k}}{x_j}\right) 
		+ \textcolor{blue}{\left(w_i, \, \trhok \, \huj^{k} \, \pd{\tvi^{f,k}}{x_j}\right)} 
		+ \left(w_i, \, \pd{\left(\trhok \huj^{k}\right)}{x_j}\,\tvi^{f,k}\right)\\ &
		+ \frac{1}{Pe}\left(w_i, \, \hJj^{k} \, \pd{\tvi^{c, k}}{x_j}\right)
		+ \textcolor{blue}{\frac{1}{Pe}\left(w_i, \, \hJj^{k} \, \pd{\tvi^{f, k}}{x_j}\right)}
		+ \frac{1}{Pe}\left(w_i, \, \pd{\hJj^{k} }{x_j}\, \tvi^{f, k}\right) \\&
		- \frac{Cn}{We} \left(\pd{wi}{x_j}, \,\left({\pd{\tphi^{h,k}}{x_i}\pd{\tphi^{h,k}}{x_j}}\right)\right)   
		+ \frac{1}{We}\left(w_i,\pd{p^{k}}{x_i}\right) \\&
		+ \frac{1}{Re}\left(\pd{w_i}{x_j},\tetak\,\pd{\left(\tvi^{c, k} + \tvi^{f, k}\right)}{x_j}\right) - \left(w_i,\frac{\trhok \, \hat{g_i}}{Fr}\right) = 0,   
		\label{eqn:weak_VMS_ns}
		\end{split}\\
		\text{Pressure Poisson:}& \quad \left(\pd{w_i}{x_i}, \, \left(\frac{1}{\trhok}\pd{p^{k+1}}{x_i}\right)\;\right) = 
		-\frac{2}{\dt} \left(w_i, \pd{\left(v_i^{c, k+1} + v_i^{f, k+1}\right)}{x_i}\right) + \left(\pd{w_i}{x_i}, \, \left(\frac{1}{\trhok}\pd{p^{k}}{x_i}\right)\;\right),\\ 
		\text{Velocity Update:}& \quad \left(w_i, \, \trhok u_i^{k+1}\right) + \frac{\dt}{2} \left(w_i, \, \pd{p^{k+1}}{x_i} \right) = 
		\left(w_i, \, \trhok \left(v_i^{c, k+1} + v_i^{f, k+1}\right)\right) + \frac{\dt}{2}\left(w_i, \,\pd{p^{k}}{x_i}\right), \label{eqn:mu_eqn_var_decom}
\end{align} 
where $\vec{w},\vec{v}^{c, k+1}, \in \mathscr{P}\vec{H}^{r}(\Omega)$, $\vec{v}^{f, k+1} \in (\mathscr{I} - \mathscr{P})\vec{H}^{r}(\Omega)$.
Here $\mathscr{I}$ is the identity operator and $\mathscr{P}$ is the projection operator.  Note that we add the following equation to the original momentum equation (\cref{eqn:nav_stokes_var_semi_disc}) using mass conservation~\cref{eqn:cont_consv_var_semi_disc}:
\begin{equation}
\left(w_i, \, \frac{\left(\rhokplusOne - \rhok\right)}{\delta t}\tvi^{f, k}\right) + \left(w_i, \, \pd{\left(\trhok \huj^{k}\right)}{x_j}\,\tvi^{f,k}\right)+ \frac{1}{Pe}\left(w_i, \, \pd{\hJj^{k} }{x_j}\, \tvi^{f, k}\right) = 0.
\label{eqn:fine_scale_vel_multipliedbyConsv}
\end{equation}
This is important because the terms in blue in~\cref{eqn:weak_VMS_ns} are not in conservative form, and would require higher regularity of $\tvi^{f,k}$ to incorporate.  However, using~\cref{eqn:fine_scale_vel_multipliedbyConsv} along with integration-by-parts the terms in blue can be converted into conservative terms.  Consequently, we now have extra terms:
\begin{equation}
  \left(w_i,\trhok\, \frac{\left(v^{f,k+1}_i - u^{f,k}_i\right)}{\delta t}\right) + \left(w_i, \, \frac{\left(\rhokplusOne - \rhok\right)}{\delta t}\tvi^{f, k}\right).
  \label{eqn:temporal_fine_terms}
\end{equation} 
We can think of this addition as a second order discretization of $\left(w_i, \partial{\left(\rho v_i^{f}\right)}/\partial {t}\right)$. We assume, following~\citep{Bazilevs2007}, that the time derivative of fine scale momentum is zero.

We use the residual-based approximation proposed in~\citet{ Bazilevs2007} for the fine-scale components, applied to a two-phase system~\cite{Khanwale2020}, to close the equations:  
\begin{equation}
\trhok v_i^{f, k+1} = -\tau_m \mathcal{R}_m(\trhok,\vec{v}^{k+1},\vec{u}^{k},\vec{u}^{k-1}, p^k)
\label{eqn:resid_vms}
\end{equation}
with the following parameter values:
\begin{equation}
\begin{split}
\tau_m &= \left( \frac{4}{\Delta t^2}  + v_i^{c}G_{ij}v_j^{c} + \frac{1}{\trhok \, Pe}v_i^{c}G_{ij}J_j^c 
+ C_{I} \left(\frac{\tetak}{\trhok Re}\right)^2 G_{ij}G_{ij}\right)^{-1/2}.
\end{split}
\end{equation}
Here we set $C_{I}$ and $C_{\phi}$ for all our simulations to 6 and the residuals are given by
\begin{equation}
\begin{split}
\mathcal{R}_m\left(\trhok, v_i^{c,k+1},u_i^{k},u_i^{k-1},p^k\right) &= \trhok\frac{v_i^{c,k+1} - u_i^{k}}{\dt} + \trhok \huj^{k}\pd{\tvi^{c, k}\;}{x_j} + 
\frac{1}{Pe}\hJj\pd{\tvi^{c, k}}{x_j} \\ &
+ \frac{Cn}{We} \pd{}{x_j}\left({\pd{\phi^{h,k}}{x_i}\pd{\phi^{h,k}}{x_j}}\right) \\ &+ 
\frac{1}{We}\pd{p^k}{x_i} - \frac{1}{Re}\pd{}{x_j}\left({\tetak \pd{\tvi^{c, k}}{x_j}}\right) - \frac{\trhok\hat{g}}{Fr}.
\end{split}
\end{equation}
In the above expressions, we used the following notation:
\begin{equation}
\trhok := \trhok := \rho\left(\tphi^{h,k}\right) \qquad \text{and} \qquad
\eta^h := \eta\left(\tphi^{h,k}\right).
\end{equation}
Now, we replace the infinite-dimensional function spaces by their finite dimensional counterparts using conforming Galerkin finite elements where the the trial and test functions are taken from the same spaces.
Note that we only solve for the coarse-scale components. For notational simplicity we do not add the conventional superscript $h$ denoting finite dimensional conforming Galerkin approximations.  All the functions spaces in the definition below are finite dimensional.  The resulting discrete variational formulation can then be defined as follows.
\begin{definition}[VMS setting for velocity]  
	Find $\vec{v}^{c,k+1} \in \mathscr{P}\vec{H}^{r}_0(\Omega)$ such that
	\begin{align}
	\begin{split}
	\text{Velocity prediction:}&  \quad \left(w_i,\trhok\, \frac{v^{c,k+1}_i - u^{k}_i}{\delta t}\right) 
	+ \frac{1}{2}\left(w_i, \, \trhok \, \huj^{k} \, \pd{v_i^{c,k+1}}{x_j}\right)
	+ \frac{1}{2}\left(w_i, \, \trhok \, \huj^{k} \, \pd{u_i^{k}}{x_j}\right)  \\ &
	+ \frac{1}{2}\left(\pd{w_i}{x_j}, \, \huj^{k} \, \tau_m\mathcal{R}_m(\trhok,v_i^{c,k+1},u_i^{k},u_i^{k-1}, p^{k})\right) \\ &
	+ \frac{1}{2Pe}\left(w_i, \, \hJj^{k} \, \pd{v_i^{c, k+1}}{x_j}\right) 
	+ \frac{1}{2Pe}\left(w_i, \, \hJj^{k} \, \pd{u_i^{k}}{x_j}\right)\\ &
	+ \frac{1}{2Pe}\left(\pd{w_i}{x_j}, \, \hJj^{k} \, \frac{1}{\trhok}\tau_m\mathcal{R}_m(\trhok,v_i^{k+1},u_i^{k},u_i^{k-1}, p^k)\right) \\&
	- \frac{Cn}{We} \left(\pd{w_i}{x_j},\left({\pd{\tphi^{h,k}}{x_i}\pd{\tphi^{h,k}}{x_j}}\right)\right)   
	+ \frac{1}{We}\left(w_i,\pd{p^{k}}{x_i}\right) \\&
	+ \frac{1}{Re}\left(\pd{w_i}{x_j},\tetak\,\pd{\left(\tvi^{c, k} + \tvi^{f, k}\right)}{x_j}\right) - \left(w_i,\frac{\trhok \, \hat{g_i}}{Fr}\right) = 0,
	\label{eqn:nav_stokes_eqn_var_decom}
	\end{split}
	\end{align}
	$\forall \vec{w} \in \mathscr{P}\vec{H}^{r,h}_0(\Omega)$, given $\vec{u}^{k}, \vec{u}^{k-1} \in \mathscr{P}\vec{H}^{r,h}_0(\Omega)$ and $p^k, \phi^h, \mu^h \in \mathscr{P}H^{r,h}(\Omega)$.
	\label{defn:weak_VMS_disc}
\end{definition}
\begin{remark}
	Note that $\hui^{k}$ is not decomposed here. The velocities at older times have their fine scale terms added to them (see the boxed term in \cref{eqn:vel_update_eqn_var_decom}) in the velocity update step in the previous time step and are completely known. Therefore, for ease of implementation in the code we do not need to decompose them.  When we convert the fine scale velocities in the convective form again to get a complete velocity at time $k$ (see term 3 and term 5), we get extra fine scale terms 
	$-\left(w_i,\, \trhok \left(u_i^{f,k}/dt\right)  \right) + 1/2 \left(w_i,\, \left(\left(\rhokplusOne - \rhok \right)/\dt\right) u_i^{f,k} \right)$. Our experimental results suggest that these terms are very small, and consequently we set them to zero.  We emphasize here that \cref{eqn:nav_stokes_eqn_var_decom} is a linear equation. We deploy a semi-implicit time marching scheme that turns non-linear terms into a linear  algebraic equation 
	that must be solved at each time step. The viscous terms are considered to be implicit.  This type of discretization improves efficiency as a full Newton iteration need not be solved while maintaining stability and robustness as viscous terms are still implicit.  Note that this type of discretization is different from IMEX schemes \cite{article:IMEX_AscherRuuthWetton1995}, where the non-linear terms are completely explicit.
	\label{rmk:weak_vms}
\end{remark}

Now, let us define the variational VMS setting for the projection and update steps. 
\begin{definition}[VMS setting for pressure Poisson]  
	Find $p^{k+1} \in H^{r}(\Omega)$ such that
	\begin{align}
	\begin{split}
	\text{Pressure Poisson:}&\; \left(\pd{w_i}{x_i}, \, \left(\frac{1}{\trhok}\pd{p^{k+1}}{x_i}\right)\;\right) = 
	-\frac{2}{\dt} \left(w_i, \pd{v_i^{c, k+1}}{x_i}\right) 
	\\ & \quad\quad\quad\quad\quad\quad\quad\quad\quad\quad
	\boxed{- \frac{2}{\dt} \left(\pd{w_i}{x_i}, \frac{1}{\trhok}\tau_m \mathcal{R}_m\left(\trhok,v_i^{c,k+1},u_i^{k},u_i^{k-1}, p^{k}\right)\right)}
	\\ & \quad\quad\quad\quad\quad\quad\quad\quad\quad\quad
	+ \left(\pd{w_i}{x_i}, \, \left(\frac{1}{\trhok}\pd{p^{k}}{x_i}\right)\;\right),
	\label{eqn:pp_eqn_var_decom}
	\end{split}
	\end{align}
	$\forall \vec{w} \in \mathscr{P}\vec{H}^{r,h}_0(\Omega)$ and
	$\forall q \in \mathscr{P}H^{r,h}(\Omega)$.
	\label{defn:weak_VMS_disc_pp_vel}
\end{definition}
\begin{remark}
The boxed terms in \cref{defn:weak_VMS_disc_pp_vel} is very important for stability.  If that term is not included we get checkerboard instabilities in pressure as we are using equal order interpolating basis functions for both velocity and pressure.  The boxed term is a manifestation of the splitting of $v_i^{k+1}$ into its coarse scale ($v_i^{c,k+1}$) and fine scale components ($v_i^{f,k+1}$) combined with the use of residual based VMS approximation in \cref{eqn:resid_vms}.
\end{remark}
As the pressure is calculated now, we can write the variational setting for the velocity update which transforms the non-solenoidal velocity $\vec{v}^{c,k+1}$ to a solenoidal velocity $u^{k+1}$ at the current timestep. 
\begin{definition}[VMS setting for velocity update]  
	Find $p^{k+1} \in H^{r}(\Omega)$ such that
	\begin{align}
	\begin{split}
	\text{Velocity update:}\; \quad \left(w_i, \, \trhok u_i^{k+1}\right) + \frac{\dt}{2} \left(w_i, \, \pd{p^{k+1}}{x_i} \right) &= 
	\left(w_i, \, \trhok v_i^{c, k+1}\right)
	\\ & \quad
	\boxed{- \left(w_i, \, \tau_m \mathcal{R}_m\left(\trhok,v_i^{c,k+1},u_i^{k},u_i^{k-1}, p^{k}\right)\right)}
	\\ & \quad
	+ \frac{\dt}{2}\left(w_i, \,\pd{p^{k}}{x_i}\right),
	\label{eqn:vel_update_eqn_var_decom}
	\end{split}
	\end{align}
	$\forall \vec{w} \in \mathscr{P}\vec{H}^{r,h}_0(\Omega)$ given $\vec{v}^{c,k+1} \in \mathscr{P}\vec{H}^{r,h}_0(\Omega)$, and $p^{k+1}, p^{k} \in \mathscr{P}H^{r,h}(\Omega)$,
	\label{defn:weak_VMS_disc_vel_update}
\end{definition}
Here we have again used the splitting of $v_i^{k+1}$ into its coarse scale ($v_i^{c,k+1}$) and fine scale components ($v_i^{f,k+1}$) combined with the use of residual based VMS approximation in \cref{eqn:resid_vms}.
We can now write the fully discrete Cahn-Hilliard variational form as,
\begin{definition} [discrete Cahn-Hilliard equations]
	Find $\phi^h, \mu^h \in H^{r,h}(\Omega)$ such that such that
	\begin{align}
	\begin{split}
	\text{Cahn-Hilliard:}& \quad \left(q, \frac{\phi^{h, k+1} - \phi^{h,k}}{\dt}\right)
	- \left(\pd{q}{x_i}, \tui^{h, k} \tphi^{h, k} \right) 
	- \frac{1}{PeCn} \left(\pd{q}{x_i}, \frac{\partial \left(m^h\tmu^{h,k}\right)}{\partial x_i}\right) = 0, \label{eqn:phi_eqn_var_decom_rb}
	\end{split}\\
	\text{Potential:}& \quad -\left(q,\tmu^{h,k}\right) + \left(q, \psi^\prime\left(\tphi^{h, k}\right)\right) + Cn^2 \left(\pd{q}{x_i},{\pd{\tphi^{h,k}}{x_i}}\right)  = 0,
	\label{eqn:mu_eqn_var_decom_disc}
	\end{align}
	$\forall \,\, \widetilde{\vec{u}}^k \in \vec{H}^{r,h}_0(\Omega)$ and
	$\forall q \in \mathscr{P}H^{r,h}(\Omega)$.
	\label{defn:weak_VMS_disc_CH}
\end{definition}

In addition to assumptions made in \cref{eqn:temporal_fine_terms} and \cref{rmk:weak_vms}, we reiterate the other assumptions made in getting these variational problems. 
\begin{enumerate}
	\item $\vec{v}^{f} = 0$ on the boundary $\partial\Omega$; similarly $\phi^{\prime} = 0$ on  $\partial\Omega$.
	\item $\left(\pd{w_i}{x_j},\tetak \pd{\tvi^{f, k}}{x_j}\right) = 0$ from the orthogonality condition of the projector.  The projector utilizes the inner product that comes from the bilinear form of the viscous terms~\citep{Hughes2007,Bazilevs2007}.
	\item We use the pulled back
	$\phi^{*}$ (see \cref{rmk:phi_pullback}) for this calculation, which regularizes 
	$\phi$ by smoothing out overshoots and undershoots.  
	The pull back ensures $\phi^{*} \in H^{r}$, and that is projection on the mesh $\phi^{*,h} \in H^{r,h}$ as required for the cG formulation. 
\end{enumerate}
\begin{remark}
	The above formulation is written for a generic order ($r$) for the interpolating polynomials (basis functions). Nevertheless, we restrict our attention to $r = 1$ for the numerical experiments in this paper.  
\end{remark}
\begin{remark}
    Here we use an incremental pressure projection scheme. We show with detailed numerical experiments this scheme produces good results.  See \citet{Guermond2006} for an excellent review of pressure projection schemes in the weak setting, including some schemes which use rotational identities to achieve improvement.  We note that the design and efficient HPC implementation of schemes incorporating rotational identities on octree based meshes is not straightforward. This is an active area of research in our groups.
\end{remark}

\section{Octree based domain decomposition}
\label{sec:octree_mesh}


%
We use \dendroKT, a highly scalable parallel octree library, to generate, manage and traverse adaptive octree-based meshes in distributed memory. In this section, we summarize the core data-structures and algorithms used by \dendroKT. Additional details on these algorithms can be found in \cite{ishii2019solving}.  The \dendroKT~interface is the next version of the \dendroFive~interface used in~\citet{Khanwale2021}.  We present the improvements and critical changes in the octree framework compared to~\dendroFive.

\subsection{Distributed memory data structures}
One of the distinguishing aspects of \dendroKT~ compared with other adaptive mesh refinement (AMR) frameworks is that it does not store any element-to-node maps. The overall memory footprint of the adaptive mesh is kept to a minimum, storing only the coordinates of the nodes. All other information are deduced on-the-fly during traversals of the $k$D-tree. Besides reducing the memory-footprint, this also avoids indirect memory accesses (via element-to-node maps) that are extremely inefficient on modern HPC architectures. In the distributed memory setting, this choice has implications for the inter-process and intra-process data structures.

A minimal data structure is maintained to send {\em owned} nodal data to $k$D neighboring processes: a list of the MPI processes owning neighboring partitions, and the number of, and list of local indices of, owned nodes to replicate to each neighboring process. We call this data structure a scatter-map, and is similar to the information that is typically used to exchange data to create {\em ghost} or {\em halo} regions. When ghost data is received from remote processes, it is aligned with a list of remotely-owned nodal coordinates, allowing ghost data to be intermingled, sorted, and traversed alongside locally-owned data without special handling.

Once the tree is constructed (described in the next section) all tree-related information is discarded, except the coordinates of the leaf octants and coordinates of the nodal points. Direct neighborhood access is not possible because of the lack of neighborhood maps, instead efficient traversal methods are provided (\cref{subsec:traversal}) to enable FEM computations. Because of the lack of traditional maps used by unstructured mesh libraries, we refer to this approach as a {\em mesh-free} approach. 

\subsection{Octree construction and 2:1 balancing} 
\dendroKT~refines an octant based on user-specified criteria proceeding in a top-down fashion. The user defines the refinement criteria by a function that takes the coordinates of the octant, and returns \texttt{true} or \texttt{false}. Since the refinement happens locally to the element, this step is embarrassingly parallel. In distributed-memory machines, the initial top-down tree construction enables an efficient partitioning of the domain across an arbitrary number of processes. All processes start at the root node (i.e., the cubic bounding box for the entire domain). We perform redundant computations on all processes to avoid communication during the refinement stage. Starting from the root node, all processes refine (similar to a sequential implementation) until the process produces at least $\mathcal{O}(p)$ octants requiring further refinement. The procedure ensures that upon partitioning across $p$ processors, each processor gets at least one octant. Then using a space-filling-curve (SFC) based partitioning, we partition the octants across $p$ partitions~\cite{ Fernando:2017}. Once the algorithm completes this partitioning, we can restrict the refinement criterion to a processor's partition, which we can re-distribute to ensure load-balancing.  with $P$, where $p< P$ partitions, according to the adaptive grid generated. 
We enforce a condition in our distributed octrees that no two neighbouring octants differ in size by more than a factor of two (2:1 balancing). This ratio makes subsequent operations simpler without affecting the adaptive properties. Our balancing algorithm uses a variant of \textsc{TreeSort}~\cite{ Fernando:2017} with top-down and bottom-up traversal of octrees which is different from existing approaches~\cite{ bern1999parallel, BursteddeWilcoxGhattas11, SundarSampathAdavaniEtAl07}. As noted above, only the leaf octant coordinates and node coordinates (that lie on the constructed octree) are retained. Nodes that do not represent independent degrees of freedom, such as the so-called {\em hanging nodes} are also discarded.

\subsection{Identification of unique, independent nodes}
Before distributed nodal vectors can be defined and traversed,
the nodes incident on the leaf octants must be enumerated and partitioned.
We refer to the result of enumerating and partitioning the nodes
as a {\em DA}, for {\em distributed array},
as it contains the coordinates of local and remotely accessed nodes,
as well as the process-level scattermap needed for ghost exchange.
It is straightforward to emit the coordinates of the nodes appearing at each element separately.
However, most of these nodes are shared with neighbouring elements, which may reside in other processes,
and some of the nodes are dependent, or {\em hanging}, on an the face of a larger neighbouring element.
The situation is additionally complicated by the possibility of a carved-out domain,
which increases the number of ways that a node can be shared by several neighbouring elements.

In this work the DA construction can be outlined as follows.
Given the input of a distributed list of leaf octants,
two intermediate graphs are produced,
where edges represent dependencies from elements to nodes,
The second graph is used to derive node ownership and the scattermap.

In more detail:
For each leaf octant in the local mesh partition,
the coordinates of candidate nodes are emitted.
Anticipating the coordinates of potential hanging nodes on the element exterior,
a set of {\em cancellation nodes} is also emitted,
at one level deeper in the octree.
Flags on the nodal coordinates indicate whether they are regular or cancellation nodes.
Also, each node is emitted in a pairing with the emitting element,
representing a dependency from the element to the node.

At this point there exists a distributed list of edges,
ordered by element coordinates in the SFC.
The next step is to identify hanging nodes using the cancellation nodes.
The distributed list of edges is re-sorted and -partitioned,
using nodal coordinate as the \textsc{DistTreeSort} key instead of the element.
Thus all edges to a given node are stored contiguously.
\begin{itemize}
  \item If a regular node is found at the same coordinate as a cancellation node,
        then it is hanging.
        There is a bijection between the nodes of a child face that are not shared by its parent,
        and the nodes of the parent face that are not shared with the child.
        Using this bijection, hanging nodes are mapped to non-hanging nodes,
        and the resulting edges are re-emitted.
        Effectively, the original (element, node) dependencies are corrected
        so that a hanging element is able to request the nodes needed for interpolation.
  \item If a regular node is found with no cancellation node at the same coordinate,
        the corresponding edge is emitted as-is.
  \item Edges of cancellation nodes are discarded.
\end{itemize}
After the nodal keys are updated, the second graph is realized
by resorting the edges by the new nodal coordinates.

On each local partition of the graph, the set of elements is \textsc{TreeSort}-ed
against the globally-known splitters of the mesh partition,
producing a map from edges to process rank.
Effectively the graph has been aggregated into a set of (process, node) dependencies.
For each nodal coordinate, one of the connected process ranks is chosen
to be the \textit{owner} of that node.
The other connected process ranks become \textit{borrowers} of the node.
The nodes are sent to the owners with a list of the borrowers,
and to each of the borrowers with a tag specifying the owner.
After this exchange, each process has received the coordinates
of its owned nodes and remotely accessed nodes,
along with the information to be stored in the scattermap.



\subsection{Traversal-based mesh-free operations}
\label{subsec:traversal}
\dendroKT\ supports both matrix and matrix-free computations. Since, we do not store any map data-structure, such an operation is achieved by performing top-down and bottom-up traversals of the tree. We briefly describe the key ideas of the approach here for performing matrix and vector assembly, as well as the inter-grid transfers needed AMR. Additional details on mesh-free traversals can be found in ~\citet{ishii2019solving}.


\paragraph{Traversal-based Vector Assembly:}
(see \cref{fig:traversal})
The elemental vector couples elemental nodes in a global input grid vector with equivalent elemental nodes in a global output grid vector.
Within a grid vector, the nodes pertaining to a particular element are generally not stored contiguously.
If one were to read and write to the elemental nodes using an element-to-node map, the memory accesses would require indirection:
$v_{glob}[map[e*npe + i]] += v_{loc}$.
Not only do element-to-node maps cause indirect memory accesses;
the maps become complicated to build if the octree is incomplete due to complex geometry.
We take an alternative approach that obviates the need for element-to-node maps.
Instead, through top-down and bottom-up traversals of the $kD$-tree,
we ensure that elemental nodes are stored contiguously in a leaf,
and there apply the elemental operations. 

The idea of the top-down phase is to selectively copy nodes
from coarser to finer levels until the leaf level,
wherein the selected nodes are exactly the elemental nodes.
Starting at the root of the tree, we have all the nodes in the grid vector.
We create buckets for all child subtrees.
Looping through the nodes, a node is copied into a bucket if the node is incident on the child subtree corresponding to that bucket.
A node that is incident on multiple child subtrees will be duplicated.
By recursing on each child subtree and its corresponding bucket of incident nodes, we eventually reach the leaf level.

Once the traversal reaches a leaf octant, the elemental nodes have been copied into a contiguous array.
The elemental vector is computed directly, without the use of an element-to-node map.
The result is stored in a contiguous output buffer the same size as the local elemental input vector.

After all child subtrees have been traversed, the bottom-up phase returns results from a finer to a coarser level.
The parent subtree nodes are once again bucketed to child subtrees,
but instead of the parent values being copied, the values of nodes from each child are accumulated into a parent output array.
That is, for any node that is incident on multiple child subtrees, the values from all node instances are summed to a single value.
The global vector is assembled after the bottom-up phase executes at the root of the octree.

Distributed memory is supported by two slight augmentations.
Firstly, the top-down and bottom-up traversals operate on ghosted vectors.
Therefore ghost exchanges are required before and after each local traversal.
Secondly, the traversals are restricted to subtrees containing the owned octants.
The list of owned octants is bucketed top-down, in conjunction with the bucketing of nodal points.
A child subtree is traversed recursively only if one or more owned octants are bucketed to it. Note that because the traversal path is restricted by a list of existing octants,
the traversal-based operation gracefully handles incomplete octrees without special treatment.

 \begin{figure}[t!]
 \centering
  \begin{tikzpicture}[scale=0.2,every node/.style={scale=0.6} ]
	\tikzstyle{edge from parent}=[black,->,shorten <=1pt,>=stealth',semithick,draw]
	\tikzstyle{level 1}=[sibling distance=8mm]
	\tikzstyle{level 2}=[sibling distance=6mm]
	\tikzstyle{level 3}=[sibling distance=4mm]
		
		\begin{scope}[shift={(0,0)}]
		\draw[step=10] (0,0) grid +(10,10);
		\def \r{0.12}
		\foreach \x in {0,2.5,5}{
			\foreach \y in {0,2.5,5}{
				\draw[red,fill=red] (\x,\y) circle (\r);
			}
		}
		\foreach \x in {5,7.5,10}{
			\foreach \y in {5,7.5,10}{
				\draw[red,fill=red] (\x,\y) circle (\r);
			}
		}
		\foreach \x in {0,1.25,2.5,3.75}{
			\foreach \y in {6.25,7.5,8.75,10}{
				\draw[blue,fill=blue] (\x,\y) circle (\r);
			}
		}	
		\foreach \x in {6.25,7.5,8.75,10}{
			\foreach \y in {0,1.25,2.5,3.75}{
				\draw[blue,fill=blue] (\x,\y) circle (\r);
			}
		}
		\end{scope}
		
		\begin{scope}[shift={(15,0)}]
		\draw[step=5] (0,0) grid +(10,10);
		\def \r{0.1}
		\foreach \x in {0.4,2.5,4.6,5.4}{
			\foreach \y in {0.4,2.5,4.6,5.4}{
				\draw[red,fill=red] (\x,\y) circle (\r);
			}
		}
		\foreach \x in {4.6,5.4,7.5,9.6}{
			\foreach \y in {4.6,5.4,7.5,9.6}{
				\draw[red,fill=red] (\x,\y) circle (\r);
			}
		}
		\foreach \x in {0,1.25,2.5,3.75}{
			\foreach \y in {6.25,7.5,8.75,10}{
				\draw[blue,fill=blue] (\x,\y) circle (\r);
			}
		}	
		\foreach \x in {6.25,7.5,8.75,10}{
			\foreach \y in {0,1.25,2.5,3.75}{
				\draw[blue,fill=blue] (\x,\y) circle (\r);
			}
		}
		\end{scope}
		
		\begin{scope}[shift={(30,0)}]
		\draw[step=5] (0,0) grid +(10,10);
	    \draw[step=2.5] (5,0) grid +(5,5);
		\draw[step=2.5] (0,5) grid +(5,5);
		\def \r{0.1}
		
		\foreach \x in {0.4,2.5,4.6,5.4}{
			\foreach \y in {0.4,2.5,4.6,5.4}{
				\draw[red,fill=red] (\x,\y) circle (\r);
			}
		}
		\foreach \x in {4.6,5.4,7.5,9.6}{
			\foreach \y in {4.6,5.4,7.5,9.6}{
				\draw[red,fill=red] (\x,\y) circle (\r);
			}
		}
		
		\foreach \x in {0.4,1.25,2.1,2.9,3.35,4.15}{
			\foreach \y in {6.25,7.1,7.9,8.75,9.6}{
				\draw[blue,fill=blue] (\x,\y) circle (\r);
			}
		}	
		\foreach \x in {6.25,7.1,7.9,8.75,9.6}{
			\foreach \y in {0.4,1.25,2.1,2.9,3.75}{
				\draw[blue,fill=blue] (\x,\y) circle (\r);
			}
		}
		
		\end{scope}
		\draw[->] (11,7.5) -- node[above] {top} node[below] {down} (14,7.5);
		\draw[->] (14,2.5) -- node[above] {bottom} node[below] {up} (11,2.5);	
		
		\draw[->] (26,7.5) -- node[above] {top} node[below] {down} (29,7.5);
		\draw[->] (29,2.5) -- node[above] {bottom} node[below] {up} (26,2.5);	
		
		\end{tikzpicture} 
		
		\begin{tikzpicture}[scale=1.2, level distance=8mm,emph/.style={edge from parent/.style={red,->,shorten <=1pt,>=stealth',very thick,draw}},norm/.style={edge from parent/.style={black,->,shorten <=1pt,>=stealth',semithick,draw}}]
    	\tikzstyle{edge from parent}=[black,->,shorten <=1pt,>=stealth',semithick,draw]
    	\tikzstyle{level 1}=[sibling distance=8mm]
    	\tikzstyle{level 2}=[sibling distance=6mm]
    	\tikzstyle{level 3}=[sibling distance=4mm]
    	
    	\node[fill=red!30,rounded corners] {root}
    	child[emph] { node[fill=blue!30,rounded corners] {nl}
    		child[norm] { node[fill=green!30,rounded corners] {a}}
    		child[emph] { node[fill=green!30,rounded corners] {b}
    		}
    		child[norm] { node[fill=green!30,rounded corners] {f}}
    		child[norm] { node[fill=green!30,rounded corners] {g}}
    	}
    	child { node[fill=green!30,rounded corners] {h}}
    	child { node[fill=green!30,rounded corners] {i}}
    	child { node[fill=blue!30,rounded corners] {nl}
    		child { node[fill=green!30,rounded corners] {j}}
    		child { node[fill=green!30,rounded corners] {k}}
    		child { node[fill=green!30,rounded corners] {l}}
    		child { node[fill=green!30,rounded corners] {m}}
    	};
    	
    	\draw[->] (-3,0) -- (-3,-2);		
    	\draw[snake=ticks,segment length=0.95cm] (-3,0) -- (-3,-2);
    	
    	\draw (-2.8,0) node {$0$}
    	(-2.8,-0.8) node {$1$}
    	(-2.8,-1.6) node {$2$};
    	\draw (-3.3,-1.6) node [rotate=90] {$level$};		
	
	    \end{tikzpicture}
		
	\caption{Illustration of top-down \& bottom-up tree traversals for a $2D$ tree with quadratic element order. The leftmost figure depicts the unique shared nodes (nodes are color-coded based on level), as we perform top-down traversal nodes shared across children of the parent get duplicated for each bucket recursively, once leaf node is reached it might be missing elemental local nodes, which can be interpolated from immediate parent (see the rightmost figure). After elemental local node computations, bottom-up traversal performed while merging the nodes duplicated in the top-down traversal.
	    \label{fig:traversal}
	}
\end{figure}
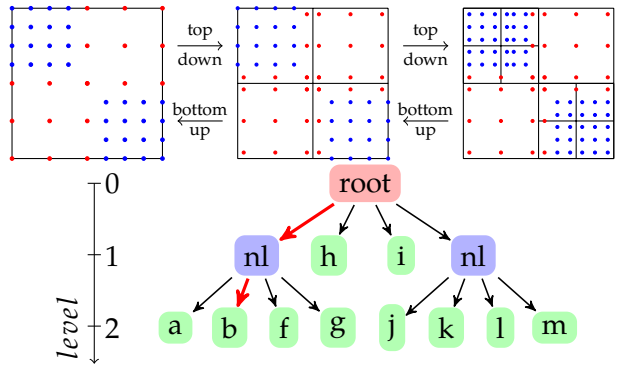

\paragraph{Traversal-based Matrix Assembly:} 
To implement matrix assembly, we have leveraged PETSc interface~\citep{petsc-web-page,petsc-efficient},
that only requires a sequence of entries
$(\text{id}_\text{row}, \text{id}_\text{col}, \text{val})$,
and can be configured to add entries with duplicate indices~\citep{petsc-user-ref}. Note that any other distributed sparse-matrix library can be supported in a similar fashion.

The remaining task is to associate the correct global node indices
with the rows and columns of every elemental matrix.
We use an octree traversal to accomplish this task.
Similar to the traversal-based vector assembly,
nodes are selectively copied from coarser to finer levels, recursively,
until reaching the leaf, wherein the elemental nodes are contiguous.
Note that integer node ids are copied instead of floating-point values from a grid vector.
At the leaf, an entry of the matrix is emitted
for every row and column of the elemental matrix,
using the global row and column indices instead of the elemental ones.
No bottom-up phase is required for assembly,
as PETSc handles the merging of multi-instanced entries.

\paragraph{Traversal-based Intergrid Transfers}

An important aspect of AMR is the ability to effectively and efficiently transfer information from the current mesh to the newly refined/coarsened mesh---the intergrid transfer. \dendroKT\ supports efficient traversal-based intergrid transfers, and regions can be locally refined or coarsened in different regions of the mesh. For efficiency, we limit the local refinement or coarsening to only be by one level\footnote{Additional levels of coarsening or refinement will need multiple AMR steps, and are supported.}. In order to transfer information from mesh $A$ (current) to mesh $B$ (new), we simultaneously traverse both meshes in a synchronized fashion, i.e., we keep track of the points/data within a bucket for both meshes. Since $A$ and $B$ are (locally) off by one level, when we reach the leaf level on either $A$ or $B$, three cases are possible,
\begin{itemize}
    \item The buckets of $A$ and $B$ are at the same level and are leaves:  we simply copy the finite-element values associated with this bucket (element),
    \item The bucket for $A$ is at the leaf level, but the bucket for $B$ is not: in this case, $B$ is locally refined and we need to interpolate values from the element of $A$ to its children.  The elemental
interpolation is done by evaluating the local shape functions at the locations of the child nodes. 
    \item The bucket for $B$ is at the leaf level, but the bucket for $A$ is not: in this case, $B$ is locally coarsened and we need to project values from the elements of $A$ to its parent. 
\end{itemize}

Once the leaf-level transfer is done, the upward traversal is the same as for  vector assembly.

\section{Numerical experiments}
\label{sec:num_exp}


\subsection{2D simulations: manufactured solutions}
\label{subsec:manfactured_soln_result}

We use the method of manufactured solutions to assess the convergence properties of our method.  We select an input ``solution'' which is solenoidal, and substitute it in the full set of governing equations. We then use the residual as a body force on the right-hand side of~\crefrange{eqn:nav_stokes_var_semi_disc}{eqn:phi_eqn_var_semi_disc}.
We choose the following ``solution'' with appropriate body forcing terms: 
\begin{equation}
\begin{split}
\vec{v} &= \left( \pi \sin^2(\pi x_1)\sin(2 \pi x_2)\sin(t), \, -\pi\sin(2\pi x_1)\sin^2(\pi x_2)\sin(t), \, 0 \right), \\
p &= \cos(\pi x_1)\sin(\pi x_2)\sin(t), \quad 
\phi = \mu = \cos(\pi x_1)\cos(\pi x_2)\sin(t).
\end{split}
\label{eq:manufac_exact}
\end{equation}
Our numerical experiments use the following non-dimensional parameters: $Re = 10$, $We = 1$, $Cn = 1.0$, $Pe = 3.0$, and $Fr = 1.0$. The density ratio is set to $\rho_{-}/\rho_{+} = 0.85$. 

For the first experiment we use a 2D uniform mesh with $512 \times 512$ bilinear elements (quads).  Panel~(a) of~\cref{fig:manufac_temporal_convergence} shows the temporal convergence of the $L^2$ errors (numerical solution compared with the manufactured solution) calculated at $t = \pi$ to allow for one complete time period. For panel (a) we show the convergence for the forcing term of $\frac{Cn}{We} \pd{}{x_j}\left({\pd{\phi}{x_i}\pd{\phi}{x_j}}\right)$. The figure shows the evolution of the error versus time-step on a log-log scale. The errors are decreasing with a slope of two for the phase-field function $\phi$, thereby demonstrating second-order convergence. For velocity the slope on the log-log scale is initially about 2.0, but then
tapers off for smaller time-steps; we expect this tapering off at smaller time-steps due to the fact that we used a fixed mesh and at some point the spatial errors dominate. As the scheme only uses pressure at previous timestep, the expected order for pressure here is 1 and in panel (a) we see the expected slope of 1 for pressure.  In panel (b) we repeat the temporal convergence study in panel (a) with a forcing of $\frac{1}{CnWe} \phi\pd{\mu}{x_i}$.  We observe that the velocity errors saturate at much larger timesteps compared to panel (a) and then they never decrease.  Recall \cref{rem:consv_forcing} where we show that the forcing used in panel (a) and panel (b) are not equivalent unless the basis functions are divergence which is not the case for our conforming Galerkin finite element method.  We use $\frac{Cn}{We} \pd{}{x_j}\left({\pd{\phi}{x_i}\pd{\phi}{x_j}}\right)$ forcing for all the other cases in this work as it shows correct 2nd order convergence for velocity. 

We next conduct a spatial convergence study. We fix the time step at $\delta t = 5\times10^{-4}$, and vary the spatial mesh resolution. Panel~(c) of~\cref{fig:manufac_temporal_convergence} shows the spatial convergence of $L^2$ errors (numerical solution compared with the manufactured solution) at $t = \pi$. 
We observe second order convergence for velocity, $\phi$ and pressure as expected for linear basis functions.

Panel~(d) of~\cref{fig:manufac_temporal_convergence} shows mass conservation for an intermediate resolution simulation with $\delta t=5\times10^{-4}$ and $256 \times 256$ elements. We plot mass drift:
\begin{equation}
\int_{\Omega} \phi\left(\vec{x},t\right) \, d\vec{x} 
- \int_{\Omega}\phi\left(\vec{x},t=0\right) \, d\vec{x},
\end{equation}
and expect this value to be close to zero as per the theoretical prediction of~\cref{prop:mass_conservation}. We observe excellent mass conservation with fluctuations of the order of $10^{-7}$, which is to be
expected in double precision arithmetic. We detail the command-line arguments along with tolerances for iterative solvers used for the Algebraic Multigrid Method from~\petsc~in~\ref{subsec:app_manufac_sol}.
 
\begin{figure}
	\centering
	\begin{tikzpicture}
	\begin{loglogaxis}[width=0.4\linewidth, scaled y ticks=true,xlabel={timestep},ylabel={$\norm{f - f_{exact}}_{L^2}$},legend entries={$v_1$,$v_2$,$p$,$\phi$, $\mu$, $slope = 1.0$, $slope = 2$},
	legend style={at={(0.5,-0.25)},anchor=north, nodes={scale=0.65, transform shape}}, 
	legend columns=2,
	title={\footnotesize(a)$h = 1/2^9$, lin-2-blocks($\rho_H/\rho_L = 1.25$)~ $\frac{Cn}{We} \pd{}{x_j}\left({\pd{\phi}{x_i}\pd{\phi}{x_j}}\right)$},
	ymin=1e-8,ymax=2e-0,
	xmin=2.5e-3,xmax=1e-0,
	cycle list/Set1,
	cycle multiindex* list={
		mark list*\nextlist
		Set1\nextlist
	},
	]
	\addplot table [x={timestep},y={L2U},col sep=comma] {Figures/convergence_mms/linearNS_projection_nonLinearCH_customTheta_kt/temporal_theta_linear2blocks_level9_dphidphi_unequalDensity.csv};
	\addplot table [x={timestep},y={L2V},col sep=comma] {Figures/convergence_mms/linearNS_projection_nonLinearCH_customTheta_kt/temporal_theta_linear2blocks_level9_dphidphi_unequalDensity.csv};
	\addplot table [x={timestep},y={L2Press},col sep=comma] {Figures/convergence_mms/linearNS_projection_nonLinearCH_customTheta_kt/temporal_theta_linear2blocks_level9_dphidphi_unequalDensity.csv};
	\addplot +[mark=triangle*] table [x={timestep},y={L2Phi},col sep=comma] {Figures/convergence_mms/linearNS_projection_nonLinearCH_customTheta_kt/temporal_theta_linear2blocks_level9_dphidphi_unequalDensity.csv};
	\addplot +[mark=diamond*] table [x={timestep},y={L2Mu},col sep=comma] {Figures/convergence_mms/linearNS_projection_nonLinearCH_customTheta_kt/temporal_theta_linear2blocks_level9_dphidphi_unequalDensity.csv};
	\addplot +[mark=none, red, dashed] [domain=0.001:1]{1*x^1.0};
	\addplot +[mark=none, blue, dashed] [domain=0.001:1]{0.001*x^2};
	\addplot +[mark=none, blue, dashed] [domain=0.001:1]{0.075*x^2};
	\addplot +[mark=none, blue, dashed] [domain=0.001:1]{2.5*x^2};
	\end{loglogaxis}
	\end{tikzpicture}
	\begin{tikzpicture}
	\begin{loglogaxis}[width=0.4\linewidth, scaled y ticks=true,xlabel={timestep},ylabel={$\norm{f - f_{exact}}_{L^2}$},legend entries={$v_1$,$v_2$,$p$,$\phi$, $\mu$, $slope = 1.0$, $slope = 2$},
	legend style={at={(0.5,-0.25)},anchor=north, nodes={scale=0.65, transform shape}}, 
	legend columns=2,
	title={\footnotesize(b)$h = 1/2^9$, lin-2-blocks($\rho_H/\rho_L = 1.25$)~ $\frac{1}{CnWe} \phi\pd{\mu}{x_j}$},
	ymin=1e-8,ymax=2e-0,
	xmin=1e-2,xmax=1e-0,
	cycle list/Set1,
	cycle multiindex* list={
		mark list*\nextlist
		Set1\nextlist
	},
	]
	\addplot table [x={timestep},y={L2U},col sep=comma] {Figures/convergence_mms/linearNS_projection_nonLinearCH_customTheta_kt/temporal_theta_linear2blocks_level9_phidmu_unequalDensity.csv};
	\addplot table [x={timestep},y={L2V},col sep=comma] {Figures/convergence_mms/linearNS_projection_nonLinearCH_customTheta_kt/temporal_theta_linear2blocks_level9_phidmu_unequalDensity.csv};
	\addplot table [x={timestep},y={L2Press},col sep=comma] {Figures/convergence_mms/linearNS_projection_nonLinearCH_customTheta_kt/temporal_theta_linear2blocks_level9_phidmu_unequalDensity.csv};
	\addplot +[mark=triangle*] table [x={timestep},y={L2Phi},col sep=comma] {Figures/convergence_mms/linearNS_projection_nonLinearCH_customTheta_kt/temporal_theta_linear2blocks_level9_phidmu_unequalDensity.csv};
	\addplot +[mark=diamond*] table [x={timestep},y={L2Mu},col sep=comma] {Figures/convergence_mms/linearNS_projection_nonLinearCH_customTheta_kt/temporal_theta_linear2blocks_level9_phidmu_unequalDensity.csv};
	\addplot +[mark=none, red, dashed] [domain=0.001:1]{1*x^1.0};
	\addplot +[mark=none, blue, dashed] [domain=0.001:1]{0.001*x^2};
	\addplot +[mark=none, blue, dashed] [domain=0.001:1]{0.075*x^2};
	\addplot +[mark=none, blue, dashed] [domain=0.001:1]{2.5*x^2};
	\end{loglogaxis}
	\end{tikzpicture}
	
	\begin{tikzpicture}
	\begin{loglogaxis}[width=0.4\linewidth, scaled y ticks=true,xlabel={Element size, $h$ (-)},ylabel={$\norm{f - f_{exact}}_{L^2(\Omega)}$},
	legend entries={$v_1$,$v_2$,$p$,$\phi$, $\mu$, $slope = 2$},
	legend style={at={(0.5,-0.25)},anchor=north, nodes={scale=0.65, transform shape}},legend columns=-1,
	xtick = {0.00390625, 0.0078125, 0.015625, 0.03125, 0.0625},
	xticklabel={
		\pgfkeys{/pgf/fpu=true}
		\pgfmathparse{exp(\tick)}%
		\pgfmathprintnumber[fixed relative, precision=2]{\pgfmathresult}
		\pgfkeys{/pgf/fpu=false}
	},
	scaled x ticks=true, 
	legend columns=2,
	title={\footnotesize(c) $\Delta t = 0.0005$, lin-2-blocks($\rho_H/\rho_L = 1.25$)~ $\frac{Cn}{We} \pd{}{x_j}\left({\pd{\phi}{x_i}\pd{\phi}{x_j}}\right)$},
	ymin=1e-8,ymax=2e-0,
	xmin=3.5e-3,xmax=7e-2,
	cycle list/Set1,
	cycle multiindex* list={
		mark list*\nextlist
		Set1\nextlist
	},
	]
	\addplot table [x={h},y={L2U},col sep=comma] {Figures/convergence_mms/linearNS_projection_nonLinearCH_customTheta_kt/spatial_theta_linear2blocks_ts0dot0005_dphidphi_unequalDensity.csv};
	\addplot table [x={h},y={L2V},col sep=comma] {Figures/convergence_mms/linearNS_projection_nonLinearCH_customTheta_kt/spatial_theta_linear2blocks_ts0dot0005_dphidphi_unequalDensity.csv};
	\addplot table [x={h},y={L2Press},col sep=comma] {Figures/convergence_mms/linearNS_projection_nonLinearCH_customTheta_kt/spatial_theta_linear2blocks_ts0dot0005_dphidphi_unequalDensity.csv};
	\addplot +[mark=triangle*] table [x={h},y={L2Phi},col sep=comma] {Figures/convergence_mms/linearNS_projection_nonLinearCH_customTheta_kt/spatial_theta_linear2blocks_ts0dot0005_dphidphi_unequalDensity.csv};
	\addplot +[mark=diamond*] table [x={h},y={L2Mu},col sep=comma] {Figures/convergence_mms/linearNS_projection_nonLinearCH_customTheta_kt/spatial_theta_linear2blocks_ts0dot0005_dphidphi_unequalDensity.csv};
	\addplot +[mark=none, blue, dashed] [domain=0.001:1]{0.001*x^2};
	\addplot +[mark=none, blue, dashed] [domain=0.001:1]{0.075*x^2};
	\addplot +[mark=none, blue, dashed] [domain=0.001:1]{2.5*x^2};
	\end{loglogaxis}
	\end{tikzpicture}
	\begin{tikzpicture}
	\begin{axis}[width=0.45\linewidth,scaled y ticks=true, 
	xlabel={Time (-)},ylabel={$\int_{\Omega} \phi (x_i)\mathrm{d}x_i - \int_{\Omega} \phi_{0} (x_i)\mathrm{d}x_i$},legend style={nodes={scale=0.65, transform shape}}, 
	ymin=-5e-7,ymax=5e-7,
	xmin=0, xmax=3.2, 
	, title={(d) Mass conservation}]
	\addplot[line width=0.35mm, color=blue]table [x={time},y={TotalPhiMinusInitial},col sep=comma] {Figures/convergence_mms/linearNS_projection_nonLinearCH_customTheta_kt/Energy_data_dt0d0005_level8.csv};
	\end{axis}
	\end{tikzpicture}	
	
	\caption{\textit{Manufactured Solution Examples-chns-block-iteration-kt:} (a) Temporal convergence of the numerical scheme for the case of manufactured solutions for conservative forcing; (b) Temporal convergence of the numerical scheme for the case of manufactured solutions for non-conservative forcing; (c) Spatial convergence of the numerical scheme for the case of manufactured solutions for conservative forcing; (d) the mass conservation for the case of manufactured solutions using $256 \times 256$ elements with time step of $5\times10^{-4}$. Here $f$ denotes variables of interest which are velocities ($v_1$ and $v_2$), pressure $p$, phase field $\phi$, and chemical potential $\mu$. } 
	\label{fig:manufac_temporal_convergence}
\end{figure}
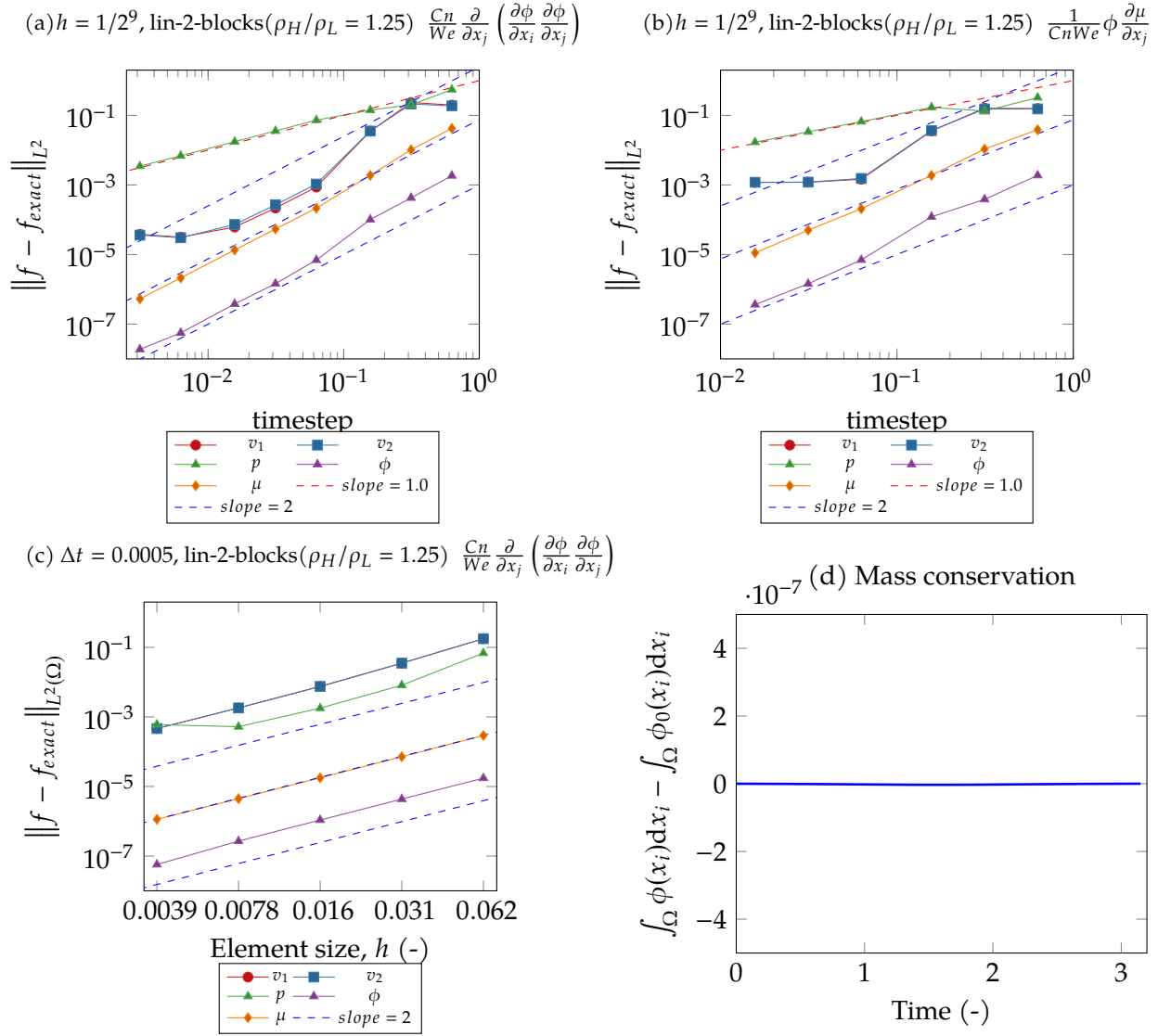

\subsection{2D simulations: single rising bubble}
\label{subsec:single_rising_drop_2D}

To validate the framework, we consider benchmark cases for a single rising bubble in a quiescent water channel \citep{ Hysing2009, Aland2012, Yuan2017}.  
We set the Froude number ($Fr = u^2/(gD)$) to 1, which fixes the non-dimensional velocity scale to $u = \sqrt{gD}$, where $g$ is the gravitational acceleration, and $D$ is the diameter of the bubble.  This scaling gives a Reynolds number of $\rho_c g^{1/2}D^{3/2}/\mu_c$, where $\rho_c$ and $\mu_c$ are the specific density and specific viscosity of the continuous fluid, respectively. 
The Archimedes number,  $Ar = \rho_c g^{1/2}D^{3/2}/\mu_c$, 
scales the diffusion term in the momentum equation. 
The Weber number here becomes $We = \rho_c g D^2/\sigma$.  We use the density of the continuous fluid to non-dimensionalize: $\rho_{+} = 1$.  The density and viscosity ratios are $\rho_{+}/\rho_{-}$ and $\nu_{+}/\nu_{-}$, respectively. We present results for two standard benchmark cases.  

\Cref{tab:physParam_bubble_rise_2D_benchmarks} shows the parameters and the corresponding non-dimensional numbers for the two cases simulated in this work.  The bubble is centered at $(1,1)$, and since our scaling length scale is the bubble diameter, the bubble diameter for our simulations is 1.  The domain is 
$[0,2]\times[0,4]$.  
Following the benchmark studies in the literature, we choose the top and bottom wall to have no slip boundary conditions and the side walls to have boundary conditions: $v_1=0$ ($x$-velocity) and $\frac{\partial v_2}{\partial x}=0$ ($y$-velocity). 
We detail the command-line arguments along with tolerances for iterative solvers used for the Algebraic Multigrid Method from~\petsc~in~\ref{subsec:app_bubble_rise}. 
We use a time step of $\num{2.5e-3}$ for test case 1 and $\num{1e-3}$ for test case 2. It is important to note that this is a substantially higher time step compared to fully linear block schemes presented in~\citep{Shen2010a,Shen2010b,Shen2015,Chen2016,Zhu2019} and on par with fully implicit schemes such as~\citep{Guo2017,Khanwale2020,Khanwale2021} even though the velocity prediction, pressure Poisson, and velocity update steps are linear.  This is the advantage of using a non-linear time-discretization for Cahn-Hilliard (CH).  Our scheme is not limited by interfacial relaxation timescales. Interestingly, in practice, the non-linear iteration converges in a single Newton iteration, thus becoming computationally equivalent to a linear scheme. Finally, we notice that this scheme allows for a larger range of timesteps.  

\begin{table}[H]
	\centering\normalsize\setlength\tabcolsep{5pt}
	\begin{tabular}{@{}|c|c|c|c|c|c|c|c|c|c|c|c|@{}}
		\toprule
		Test Case  & $\rho_{c}$  & $\rho_{b}$  & $\mu_{c}$  & $\mu_{b}$ & $\rho_{+}/\rho_{-}$ & $\nu_{+}/\nu_{-}$  & $g$ & $\sigma$ & $Ar$ & $We$ & $Fr$ \\
		\midrule
		\midrule
		{$1$}  & {1000}  & {100}  & {10}  &{1.0} & {10}     &{10} & {0.98}    & {24.5}  &  {35}     & {10} & {1.0}    \\
		{$2$}  & {1000}  & {1.0}  & {10}  &{0.1} & {1000}     &{100} & {0.98}    & {1.96}  &  {35}     & {125} & {1.0}    \\
		\bottomrule
	\end{tabular}
	\caption{Physical parameters and corresponding non-dimensional numbers for the 2D single rising drop  benchmarks considered
	in \cref{subsec:single_rising_drop_2D}.}
	\label{tab:physParam_bubble_rise_2D_benchmarks}                            
\end{table}

\subsubsection{Test case 1}
\label{subsubsec:single_rising_drop_2D_t1}
This test case considers the effect of higher surface tension, and consequently less deformation of the bubble as it rises. 
We compare the bubble shape in~\cref{fig:test_case1} with benchmark quantities presented in three previous studies~\citep{ Hysing2009, Aland2012, Yuan2017}.  We take $Cn=\num{1e-2}$ for this case.  Panel~(a) of~\cref{fig:test_case1} shows a shape comparison against benchmark studies in the literature for the case with Adaptive Mesh Refinement (AMR) vs Uniform Mesh. We see an excellent agreement in the shape of the bubble.  Panel~(b) of~\cref{fig:test_case1} shows a comparison of centroid locations with respect to time against benchmark studies in the literature; again, we see an excellent agreement for both AMR and Uniform Mesh. We can see from the magnified inset besides of~panel (a) of \cref{fig:test_case1} that both the cases with AMR and uniform meshes show excellent agreement with benchmark studies.  

We next check whether the numerical method follows the theoretical energy stability~\footnote{The energy function in~\cref{eqn:energy_functional} should decrease as a function of time}.  
We present the evolution of the energy functional defined in \cref{eqn:energy_functional} for test case 1. 
Panel~(c) of~\cref{fig:test_case1}~shows that the energy is decreasing in accordance with the energy stability condition for both the cases of AMR and uniform meshes. The energy profile using the AMR mesh is nearly identical to the uniform mesh, suggesting that adaptive meshes can capture the energy dissipation correctly. 

Finally, we check the mass conservation. Panel~(d) shows the total mass of the system minus the initial mass. For the uniform mesh the change in the total mass is of the order of $10^{-8}$, even after 1600 time steps.  As expected, we see more drift for the AMR mesh compared to the uniform mesh.  This is because the interpolation strategy used in adaptive mesh refinement to interpolate fields between adapted meshes is \textit{not mass conserving}. Mass conserving interpolations for conforming Galerkin methods is an open question and out of the scope of the current work. Excellent mass conservation with the uniform mesh show that the numerical method delivers excellent mass conservation for long time horizons.  

It is useful to compare the computational cost of using an AMR mesh versus using a uniform fine mesh. The uniform mesh has $\sim 2$ million elements.
The AMR mesh on the other hand has a mesh count of about $54,000$ elements.
It is important to note that for this test case the $We$ number is small and the shape of the bubble does not change much therefore, one does not require very small $Cn$ to resolve the interfacial dynamics.  Thus, in this case the resolution for interface is not very high and we do not anticipate a large efficiency gain.

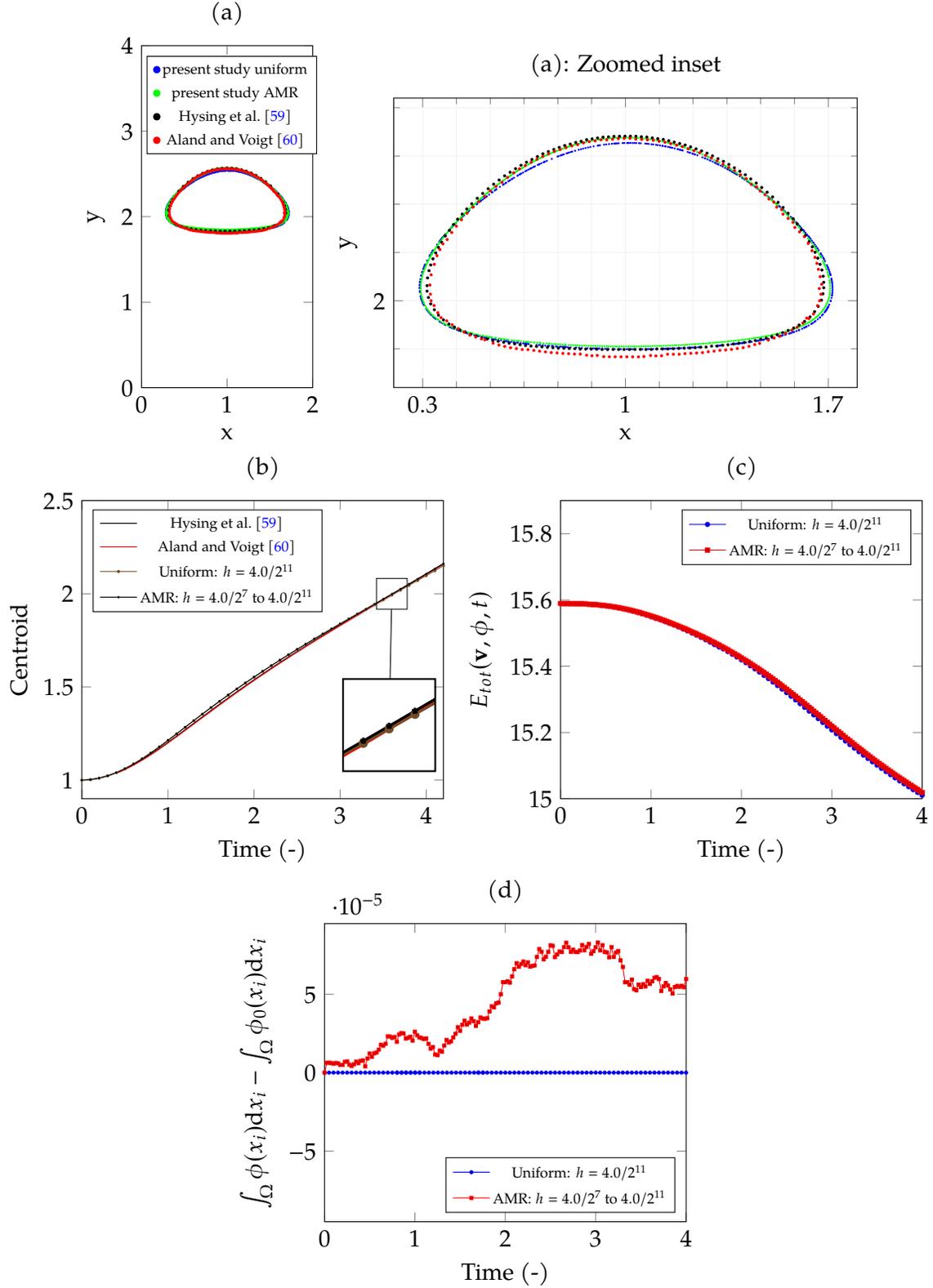
\begin{figure}[H]
	\centering
	\tikzexternaldisable
	\begin{tikzpicture}
	\begin{axis}[width=0.5\linewidth,scaled y ticks=true,xlabel={$\mathrm{x}$},ylabel={$\mathrm{y}$},legend style={nodes={scale=0.65, transform shape}}, ymin=0.0, ymax=4.0, ytick distance=1.0,  xtick={0.0, 1.0, 2}, title={(a)},
	legend style={nodes={scale=0.95, transform shape}, row sep=2.5pt},
	legend entries={
		present study uniform,
		present study AMR,
		\citet{Hysing2009}, \citet{Aland2012}},
	legend pos= north west,
	legend image post style={scale=1.0},
	unit vector ratio*=1 1 1,
	xmin=0, xmax=2, 
	legend image post style={scale=3.0},
	]
	\addplot [only marks,mark size = 0.5pt,color=blue, each nth point=5, filter discard warning=false, unbounded coords=discard] table [x={x},y={y},col sep=comma] {Figures/Bubble_rise_benchmarks/Re35We10/bubbleShapeRe35We10.csv};
	\addplot [only marks,mark size = 0.5pt,color=green, each nth point=5, filter discard warning=false, unbounded coords=discard] table [x={x},y={y},col sep=comma] {Figures/Bubble_rise_benchmarks/Re35We10/bubbleShapeRe35We10AMR.csv};
	\addplot [only marks,mark size = 0.5pt,color=black,each nth point=3, filter discard warning=false, unbounded coords=discard] table [x={x},y={y},col sep=comma] {Figures/Bubble_rise_benchmarks/Re35We10/Hysing_Re35We10.csv};
	\addplot [only marks,mark size = 0.5pt,color=red,each nth point=1, filter discard warning=false, unbounded coords=discard] table [x={x},y={y},col sep=comma] {Figures/Bubble_rise_benchmarks/Re35We10/Aland_Voight_Re35We10_shape.csv};
	\end{axis}
	\end{tikzpicture}
	\tikzexternalenable
	\begin{tikzpicture}
	\begin{axis}[width=0.55\linewidth,scaled y ticks=true,xlabel={$\mathrm{x}$},ylabel={$\mathrm{y}$},legend style={nodes={scale=0.65, transform shape}}, ymin=1.70, ymax=2.7, ytick distance=1.0,  xtick={0.3, 1.0, 1.7}, title={(a): Zoomed inset},
	legend style={nodes={scale=0.95, transform shape}, row sep=2.5pt},
	legend pos= north west,
	legend image post style={scale=1.0},
	unit vector ratio*=1 1 1,
	xmin=0.2, xmax=1.8, 
	legend image post style={scale=3.0},
	grid=both,
	grid style={line width=.1pt, draw=gray!10},
	minor tick num=5
	]
	\addplot [only marks,mark size = 0.25pt,color=blue, each nth point=4, filter discard warning=false, unbounded coords=discard] table [x={x},y={y},col sep=comma] {Figures/Bubble_rise_benchmarks/Re35We10/bubbleShapeRe35We10.csv};
	\addplot [only marks,mark size = 0.25pt,color=green, each nth point=4, filter discard warning=false, unbounded coords=discard] table [x={x},y={y},col sep=comma] {Figures/Bubble_rise_benchmarks/Re35We10/bubbleShapeRe35We10AMR.csv};
	\addplot [only marks,mark size = 0.5pt,color=black,each nth point=1, filter discard warning=false, unbounded coords=discard] table [x={x},y={y},col sep=comma] {Figures/Bubble_rise_benchmarks/Re35We10/Hysing_Re35We10.csv};
	\addplot [only marks,mark size = 0.5pt,color=red,each nth point=1, filter discard warning=false, unbounded coords=discard] table [x={x},y={y},col sep=comma] {Figures/Bubble_rise_benchmarks/Re35We10/Aland_Voight_Re35We10_shape.csv};
	\end{axis}
	\end{tikzpicture}
	
	\tikzexternaldisable
	\begin{tikzpicture}[spy using outlines={rectangle, magnification=3, size=1.5cm, connect spies}]
	\begin{axis}[width=0.45\linewidth,scaled y ticks=true,xlabel={Time (-)},ylabel={Centroid},legend style={nodes={scale=0.65, transform shape}}, xmin=0, xmax=4.2, ymin=0.9, ymax=2.5, ytick distance=0.5,  xtick={0.0, 1.0, 2, 3, 4}, title={(b)},
	legend style={nodes={scale=0.95, transform shape}, row sep=2.5pt},
	legend entries={\citet{Hysing2009}, 
		\citet{Aland2012}, 
		Uniform: $h = 4.0/2^{11}$,
		AMR: $h = 4.0/2^{7}$ to $4.0/2^{11}$ 
	},
	legend pos= north west,
	legend image post style={scale=1.0},
	]
	\addplot [line width=0.15mm, color=black] table [x={time},y={centroid},col sep=comma] {Figures/Bubble_rise_benchmarks/Re35We10/Hysing_centroid_Re35We10.csv};
	\addplot [line width=0.15mm, color=red] table [x={time},y={centroid},col sep=comma] {Figures/Bubble_rise_benchmarks/Re35We10/Aland_Voight_centroid_Re35We10.csv};
	\addplot+[mark size = 0.5pt]table [x={time},y={centroid},col sep=comma, each nth point=1, filter discard warning=false, unbounded coords=discard] {Figures/Bubble_rise_benchmarks/Re35We10/centerOfMass_uniformlevel11.csv};	
	\addplot+[mark size = 0.5pt]table [x={time},y={centroid},col sep=comma, each nth point=1, filter discard warning=false, unbounded coords=discard] {Figures/Bubble_rise_benchmarks/Re35We10/centerOfMassRe35We10AMR.csv};
	\coordinate (a) at (axis cs:3.6,2.0);
	\end{axis}
	\spy [black] on (a) in node  at (5,1.2);
	\end{tikzpicture}
	\hskip 5pt
	\begin{tikzpicture}
	\begin{axis}[width=0.45\linewidth,scaled y ticks=true,xlabel={Time (-)},ylabel={$E_{tot}(\vec{v},\phi,t)$},legend style={nodes={scale=0.65, transform shape}}, xmin=0, xmax=4, xtick={0,1,2,3,4},  title={(c)},
	legend style={nodes={scale=0.95, transform shape}, row sep=2.5pt},
	legend entries={
		Uniform: $h = 4.0/2^{11}$,
		AMR: $h = 4.0/2^{7}$ to $4.0/2^{11}$
	},
	legend pos= north east,
	legend image post style={scale=1.0},
	ymin=15,ymax=15.9
	]
	\addplot +[mark size = 1pt, each nth point=20, filter discard warning=false, unbounded coords=discard] table [x={time},y={TotalEnergy},col sep=comma] {Figures/Bubble_rise_benchmarks/Re35We10/Energy_dataRe35We10Uniform.csv};
	\addplot +[mark size = 1pt, each nth point=10, filter discard warning=false, unbounded coords=discard] table [x={time},y={TotalEnergy},col sep=comma] {Figures/Bubble_rise_benchmarks/Re35We10/Energy_dataRe35We10AMR.csv};
	\end{axis}
	\end{tikzpicture}

	\begin{tikzpicture}
	\begin{axis}[width=0.45\linewidth,scaled y ticks=true,xlabel={Time (-)},ylabel={$\int_{\Omega} \phi (x_i)\mathrm{d}x_i - \int_{\Omega} \phi_{0} (x_i)\mathrm{d}x_i$},legend style={nodes={scale=0.65, transform shape}}, xmin=0, xmax=4, xtick={0,1,2,3,4}, title={(d)},
	legend style={nodes={scale=0.95, transform shape}, row sep=2.5pt},
	legend entries={
		Uniform: $h = 4.0/2^{11}$,
		AMR: $h = 4.0/2^{7}$ to $4.0/2^{11}$
	},
	legend pos= south east,
	legend image post style={scale=1.0},
	ymin=-9.5e-5,ymax=9.5e-5]
	\addplot+[mark size = 0.75pt, each nth point=20, filter discard warning=false, unbounded coords=discard]table [x={time},y={TotalPhiMinusInit},col sep=comma] {Figures/Bubble_rise_benchmarks/Re35We10/Energy_dataRe35We10Uniform.csv};
	\addplot+[mark size = 0.75pt, each nth point=10, filter discard warning=false, unbounded coords=discard]table [x={time},y={TotalPhiMinusInit},col sep=comma] {Figures/Bubble_rise_benchmarks/Re35We10/Energy_dataRe35We10AMR.csv};
	\end{axis}
	\end{tikzpicture}
	\caption{ \textit{2D Single Rising Drop Test Case 1}
	(\cref{subsubsec:single_rising_drop_2D_t1}). Shown in the panels are (a) comparisons of the computed bubble shape against results
	from the literature at non-dimensional time $T = 4.2$; (b) comparisons of the rise of the bubble centroid against results from the literature; (c) decay of the energy functional illustrating energy stability; and (d) total mass conservation (integral of total $\phi$).}
	\label{fig:test_case1}
\end{figure}

\subsubsection{Test case 2}
\label{subsubsec:single_rising_drop_2D_t2}
This test case considers a lower surface tension resulting in high deformations of the bubble as it rises. 
As before, we compare the bubble shape in~\cref{fig:test_case2} with benchmark quantities presented in~\citep{ Hysing2009, Aland2012, Yuan2017}.  Panel~(a) shows the shape comparison with benchmark studies in the literature. We see an excellent agreement in the shape of the bubble. All simulations (our results and benchmarks) exhibit a skirted bubble shape. We see an excellent match in the overall bubble shape with minor differences in the dynamics of its tails between the AMR and uniform meshes. Specifically, we see that the tails of the bubble in our case pinch-off to form satellite bubbles\footnote{Such instabilities require a low $Cn$ number, as only a  thin interface can capture the dynamics of the thin tails of the bubble.}.  The slight difference between the shape and location of the satellite drops between the AMR and uniform meshes is expected as there is some difference in the convection of the satellite drops due to lower resolution in the bulk.  We performed this simulation with a $Cn=0.005$ for both the AMR and uniform meshes.  We can see in panel~(a) of~\cref{fig:test_case2} that our simulation captures this filament pinch-off in the tails.  The works of~\citet{ Aland2012, Yuan2017} did not observe these thin tails and pinch-offs, while~\citet{ Hysing2009} described pinch-off of the tails and satellite bubbles.  

Panel~(b) of~\cref{fig:test_case2} compares the centroid location evolution with time. Again we see an excellent agreement with all three previous benchmark studies.  We can see from the magnified inset in this panel that for both AMR and uniform meshes the plot approaches the benchmark studies and we see an almost exact overlap between the benchmark. Next, we report the evolution of the energy functional defined in~\cref{eqn:energy_functional} for test case 2.  Panel~(c) of~\cref{fig:test_case2}~shows the decay of the total energy functional in accordance with the energy stability condition for both AMR and uniform meshes. Finally, panel~(d) of~\cref{fig:test_case2} shows the total mass of the system in comparison with the total initial mass of the system. We can see that for all spatial resolutions, the change in the total mass against the initial total mass is of the order of \num{1e-8}, which illustrates that the numerical method satisfies mass conservation over long time horizons.  As expected, for the AMR case just like in the test case 1 we see higher drift from mass conservation with AMR. As stated earlier, we attribute this to a non-mass conserving coarsening interpolation.

It is worth noticing the advantage of deploying AMR in this case. The uniform mesh has $8.4$ million elements and takes 6 hours on 20 TACC~\Frontera~ node (120 node hours).  The AMR mesh, on the other hand, has a maximum mesh count of about $0.1$ million elements at peak refinement and takes 4 hours on 2 TACC~\Frontera~ nodes on (8 node hours).  We get a much higher speed up for this test case compared to test case 1. High $We$ results a lot in the deformation of bubble shape and therefore, requires very small $Cn$ to resolve the interfacial dynamics.  Therefore, the minimum resolution required in this case is significantly higher compared to the test case 1. 

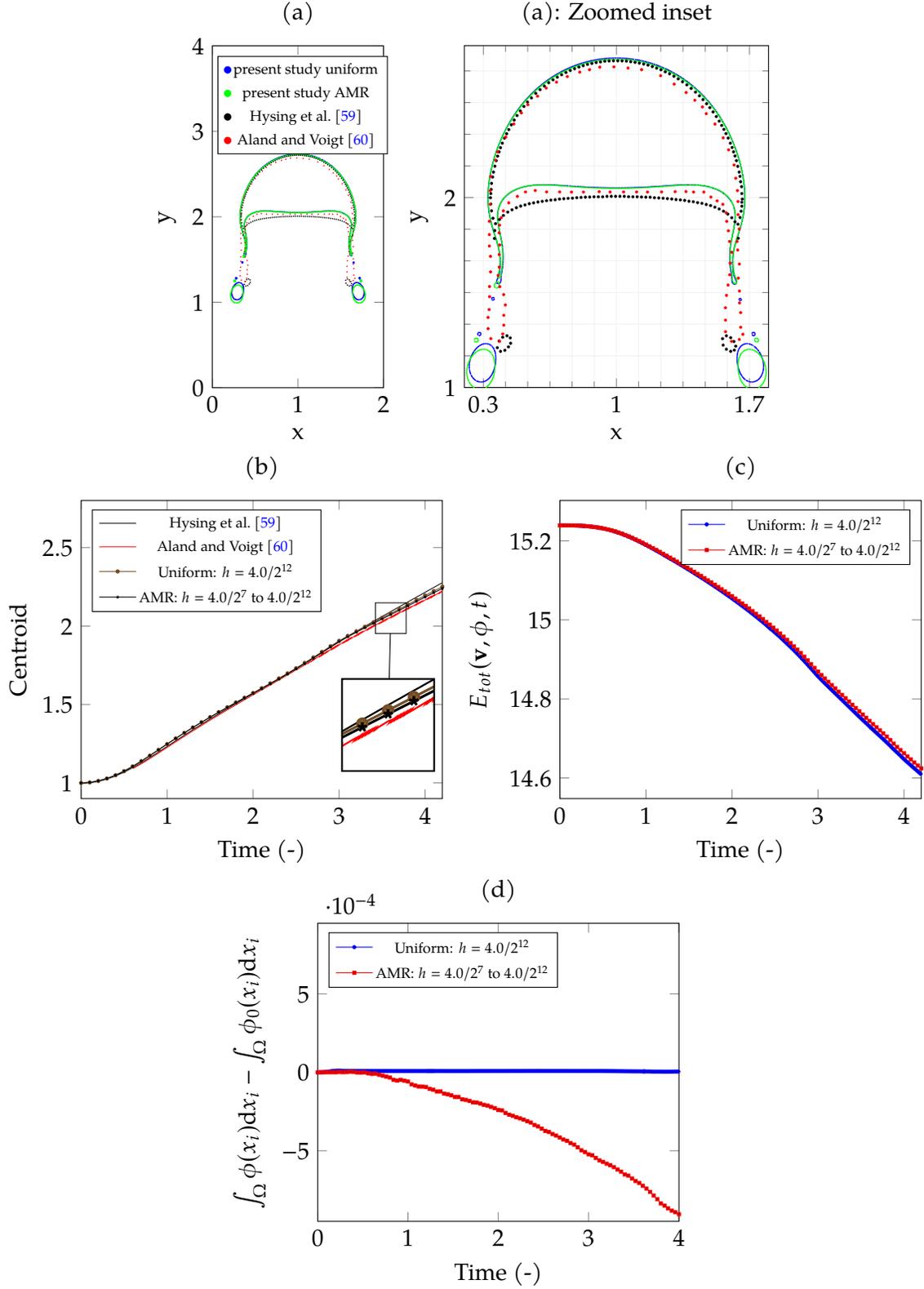
\begin{figure}[H]
	\centering
	\tikzexternaldisable
	\begin{tikzpicture}
	\begin{axis}[width=0.5\linewidth,scaled y ticks=true,xlabel={$\mathrm{x}$},ylabel={$\mathrm{y}$},legend style={nodes={scale=0.65, transform shape}}, ymin=0, ymax=4, ytick distance=1.0,  xtick={0.0, 1.0, 2}, title={(a)},
	legend style={nodes={scale=0.95, transform shape}, row sep=2.5pt},
	legend entries={
		present study uniform,
		present study AMR, 
		\citet{Hysing2009}, \citet{Aland2012}, 
	},
	legend pos= north west,
	legend image post style={scale=5.0},
	unit vector ratio*=1 1 1,
	xmin=0, xmax=2, 
	legend image post style={scale=3.0}
	]
	\addplot [only marks,mark size = 0.1pt,color=blue, each nth point=10, filter discard warning=false, unbounded coords=discard] table [x={x},y={y},col sep=comma] {Figures/Bubble_rise_benchmarks/Re35We125/bubbleShapeRe35We125.csv};
	\addplot [only marks,mark size = 0.1pt,color=green, each nth point=10, filter discard warning=false, unbounded coords=discard] table [x={x},y={y},col sep=comma] {Figures/Bubble_rise_benchmarks/Re35We125/bubbleShapeRe35We125AMR.csv};
	\addplot [only marks,mark size = 0.1pt,color=black,each nth point=1, filter discard warning=false, unbounded coords=discard] table [x={x},y={y},col sep=comma] {Figures/Bubble_rise_benchmarks/Re35We125/Hysing_Re35_We125_bubble_shape.csv};
	\addplot [only marks,mark size = 0.1pt,color=red,each nth point=1, filter discard warning=false, unbounded coords=discard] table [x={x},y={y},col sep=comma] {Figures/Bubble_rise_benchmarks/Re35We125/Aland_Voight_Re35_We125_bubble_shape.csv};
	\end{axis}
	\end{tikzpicture}
	\tikzexternalenable
	\begin{tikzpicture}
	\begin{axis}[width=0.5\linewidth,scaled y ticks=true,xlabel={$\mathrm{x}$},ylabel={$\mathrm{y}$},legend style={nodes={scale=0.65, transform shape}}, ymin=1.0, ymax=2.8, ytick distance=1.0,  xtick={0.3, 1.0, 1.7}, title={(a): Zoomed inset},
	legend style={nodes={scale=0.95, transform shape}, row sep=2.5pt},
	legend pos= north west,
	legend image post style={scale=1.0},
	unit vector ratio*=1 1 1,
	xmin=0.2, xmax=1.8, 
	legend image post style={scale=3.0},
	grid=both,
	grid style={line width=.1pt, draw=gray!10},
	minor tick num=5
	]
	\addplot [only marks,mark size = 0.1pt,color=blue, each nth point=6, filter discard warning=false, unbounded coords=discard] table [x={x},y={y},col sep=comma] {Figures/Bubble_rise_benchmarks/Re35We125/bubbleShapeRe35We125.csv};
	\addplot [only marks,mark size = 0.1pt,color=green, each nth point=6, filter discard warning=false, unbounded coords=discard] table [x={x},y={y},col sep=comma] {Figures/Bubble_rise_benchmarks/Re35We125/bubbleShapeRe35We125AMR.csv};
	\addplot [only marks,mark size = 0.5pt,color=black,each nth point=1, filter discard warning=false, unbounded coords=discard] table [x={x},y={y},col sep=comma] {Figures/Bubble_rise_benchmarks/Re35We125/Hysing_Re35_We125_bubble_shape.csv};
	\addplot [only marks,mark size = 0.5pt,color=red,each nth point=1, filter discard warning=false, unbounded coords=discard] table [x={x},y={y},col sep=comma] {Figures/Bubble_rise_benchmarks/Re35We125/Aland_Voight_Re35_We125_bubble_shape.csv};
	\end{axis}
	\end{tikzpicture}
	
	\tikzexternaldisable
	\begin{tikzpicture}[spy using outlines={rectangle, magnification=3, size=1.5cm, connect spies}]
	\begin{axis}[width=0.45\linewidth,scaled y ticks=true,xlabel={Time (-)},ylabel={Centroid},legend style={nodes={scale=0.65, transform shape}}, xmin=0, xmax=4.2, ymin=0.9, ymax=2.8, ytick distance=0.5,  xtick={0.0, 1.0, 2, 3, 4}, title={(b)},
	legend style={nodes={scale=0.95, transform shape}, row sep=2.5pt},
	legend entries={
		\citet{Hysing2009}, 
		\citet{Aland2012}, 
		Uniform: $h = 4.0/2^{12}$,
		AMR: $h = 4.0/2^{7}$ to $4.0/2^{12}$
	},
	legend pos= north west,
	legend image post style={scale=1.0}
	]
	\addplot [line width=0.1mm, color=black] table [x={time},y={centroid},col sep=comma] {Figures/Bubble_rise_benchmarks/Re35We125/Hysing_centroid_Re35We125.csv};
	\addplot [line width=0.1mm, color=red] table [x={time},y={centroid},col sep=comma] {Figures/Bubble_rise_benchmarks/Re35We125/aland_voight_centroid_Re35We125.csv};
	\addplot+[mark size = 0.75pt]table [x={time},y={centroid},col sep=comma, each nth point=1, filter discard warning=false, unbounded coords=discard] {Figures/Bubble_rise_benchmarks/Re35We125/centerOfMassRe35We125_uniformLevel12.csv};	
	\addplot+[mark size = 0.75pt]table [x={time},y={centroid},col sep=comma, each nth point=1, filter discard warning=false, unbounded coords=discard] {Figures/Bubble_rise_benchmarks/Re35We125/centerOfMassRe35We125AMR.csv};
	\coordinate (a) at (axis cs:3.6,2.05);
	\end{axis}
	\spy [black] on (a) in node  at (5,1.2);
	\end{tikzpicture}
	\hskip 5pt
	\begin{tikzpicture}
	\begin{axis}[width=0.45\linewidth,scaled y ticks=true,xlabel={Time (-)},ylabel={$E_{tot}(\vec{v},\phi,t)$},legend style={nodes={scale=0.65, transform shape}}, xmin=0, xmax=4.2, xtick={0,1,2,3,4},  title={(c)},
	legend style={nodes={scale=0.95, transform shape}, row sep=2.5pt},
	legend entries={
	Uniform: $h = 4.0/2^{12}$,
	AMR: $h = 4.0/2^{7}$ to $4.0/2^{12}$
	},
	legend pos= north east,
	legend image post style={scale=1.0}]
	\addplot +[mark size = 0.75pt, each nth point=20, filter discard warning=false, unbounded coords=discard] table [x={time},y={TotalEnergy},col sep=comma] {Figures/Bubble_rise_benchmarks/Re35We125/Energy_dataRe35We125Uniform.csv};
	\addplot +[mark size = 0.75pt, each nth point=40, filter discard warning=false, unbounded coords=discard] table [x={time},y={TotalEnergy},col sep=comma] {Figures/Bubble_rise_benchmarks/Re35We125/Energy_dataRe35We125AMR.csv};
	\end{axis}
	\end{tikzpicture}
	
	\begin{tikzpicture}
	\begin{axis}[width=0.45\linewidth,scaled y ticks=true,xlabel={Time (-)},ylabel={$\int_{\Omega} \phi (x_i)\mathrm{d}x_i - \int_{\Omega} \phi_{0} (x_i)\mathrm{d}x_i$},legend style={nodes={scale=0.65, transform shape}}, xmin=0, xmax=4.0, xtick={0,1,2,3,4}, title={(d)},
	legend style={nodes={scale=0.95, transform shape}, row sep=2.5pt},
	legend entries={
	Uniform: $h = 4.0/2^{12}$,
	AMR: $h = 4.0/2^{7}$ to $4.0/2^{12}$
	},
	legend pos= north west,
	legend image post style={scale=1.0},
	ymin=-9.5e-4,ymax=9.5e-4]
	\addplot+[mark size = 0.75pt, each nth point=20, filter discard warning=false, unbounded coords=discard]table [x={time},y={TotalPhiMinusInit},col sep=comma] {Figures/Bubble_rise_benchmarks/Re35We125/Energy_dataRe35We125Uniform.csv};
	\addplot+[mark size = 0.75pt, each nth point=40, filter discard warning=false, unbounded coords=discard]table [x={time},y={TotalPhiMinusInit},col sep=comma] {Figures/Bubble_rise_benchmarks/Re35We125/Energy_dataRe35We125AMR.csv};
	\end{axis}
	\end{tikzpicture}
	\caption{\textit{2D Single Rising Drop Test Case 2} 
	(\cref{subsubsec:single_rising_drop_2D_t2}). Shown in the panels are (a)~comparisons of the computed bubble shape against results from the literature at non-dimensional time $T = 4.2$; (b)~comparisons of the rise of the bubble centroid against results from the literature; (c)~decay of the energy functional illustrating energy stability; and (d)~total mass conservation (integral of total $\phi$).}
	\label{fig:test_case2}
\end{figure}
\subsection{2D simulations: Rayleigh-Taylor instability}
\label{subsec:rayleigh_taylor_2D}
We now demonstrate the performance of the numerical framework with large deformation in the interface and chaotic regimes (high Reynolds numbers) with a moderately high density ratio.  While the bubble rise case in the previous sub-section is an interplay between surface tension and buoyancy, buoyancy dominates the evolution of the Rayleigh-Taylor instability (RTI). Here the choice of non-dimensional numbers ensures that the surface tension effect is small (high Weber number). In contrast, other studies switch off the surface tension forcing terms in the momentum equations (see, e.g.,~\citep{ Xie2015, Tryggvason1990, Li1996, Guermond2000}).  We choose a finite but small surface tension because it ensure more stable filaments which are generated when the interfacial dynamics become more chaotic.  This represents a more difficult case to simulate compared to the zero surface tension counterpart as in the chaotic regime of the RTI the thin filaments and satellite drops generated need to be resolved with a very high resolution. Additionally, one has to take smaller timesteps to resolve pinch offs during the breakup of these filaments and satellite drops.  

The setup is as follows: the heavier fluid is on top of lighter fluid and the interface is perturbed. The heavier fluid on top penetrates the lighter fluid and buckles, which generates instabilities.  This interface motion is challenging to track due to large changes in its topology. Additionally, the long term RTI generally encompasses turbulent conditions that require resolving finer scales to capture the complete dynamics.  We non-dimensionalize the problem by selecting the width of the channel as the characteristic length scale and the density of the heavier fluid as the characteristic specific density.  Just as in the bubble rise case we use buoyancy-based scaling, setting the Froude number ($Fr = u^2/(gD)$) to 1, which fixes the non-dimensional velocity scale to be $u = \sqrt{gD}$, where $g$ is the gravitational acceleration, and $D$ is the width of the channel.  Using this velocity to calculate the Reynolds number, we get $Re = \rho_L g^{1/2}D^{3/2}/\mu_L$, where $\rho_L$ and $\mu_L$ are the specific density and specific viscosity of the light fluid, respectively. We set the Reynolds number to $Re = 3000$.  These choices lead to a Weber number of $We = \rho_c g D^2/\sigma$.  To compare our results with previous studies, we simulate with the same initial conditions as presented in~\citet{Xie2015, Khanwale2021}.  The $We$ number is selected to be 100, so that the effect of surface tension is small on the evolution of the interface but still finite.  

The Atwood number ($At$) is often used to parameterize the dependence on density ratio, with $At = \left(\rho_{+} - \rho_{-}\right)/\left(\rho_{+} + \rho_{-}\right)$.  For the density ratio of 0.1, the Atwood numbers is $At = 0.82$.
We chose specific density of the heavy fluid to non-dimensionalize, therefore $\rho_{+} = 1.0$, and $\rho_{-} = 0.33$ for $At = 0.5$, while $\rho_{-} = 0.1$ for $At = 0.82$.  $\nu_{+}/\nu_{-}$ the viscosity ratio is set to 1. We use a no-slip boundary condition for velocity on all the walls along with no flux boundary conditions for $\phi$ and $\mu$.  The no-flux boundary condition for $\phi$ and $\mu$ inherently produce a 90 degree wetting angle (neutral contact angle) for both the fluids.  

Weak surface tension reduces vortex roll-up in the simulations of immiscible systems, especially at lower $At$. \citet{ Waddell2001} experimentally showed different vortex roll-up amounts for miscible and immiscible systems. This difference is analogous to the difference between zero surface tension simulations and finite surface tension simulations.  This effect is irrelevant to compare front locations against the literature (short time horizons). Nevertheless, it is crucial to accurately track the long time dynamics (as smaller filaments are more stable in the non-zero surface tension case).  A finite surface tension would generate different long term mixing and breakup of the interface. 

We run numerical experiment for a long term RTI with $Cn = 0.00125$\footnote{Note that this is a very small $Cn$ number and requires a very fine mesh. This corresponds to the finest resolution selected in~\citep{Khanwale2021} for the same case.} with an adaptive mesh where the refinement varies from $8/2^{10}$ to $8/2^{14}$, for the Atwood number of $At = 0.82$.  We use a variable timestep for this simulation.  We use a timestep of \num{1.25e-4} till non-dimensional time $t = 0.6$ as we have linear behavior of the interface.  We then decrease the timestep to \num{5e-5} till $t = 1.4$ as the interface starts accelerating, and then we further reduce it to \num{2.5e-5} at $t = 1.4$ in anticipation of the shedding and breakup events near the interface. 
A carefully tuned algebraic multi-grid linear solver with successive over-relaxation is setup for the velocity, Cahn-Hilliard and pressure Poisson solvers.  
We detail the command-line arguments along with the tolerances used in \ref{subsec:app_rt2d}.  Note that the resolution here is finer than the resolution used in~\citep{Khanwale2021} where we run the same case with an uniform mesh of $800 \times 6400$ elements.  

\Cref{fig:rt2d} shows the snapshots of the interface shape along with the corresponding vorticity generated as it evolves in time for $At = 0.82$ and $Cn = 0.00125$. We observe the usual evolution of Rayleigh Taylor instability where the heavier fluid penetrates the light one, causing the lighter fluid to rise near the wall.  The penetrating plume of the heavier fluid sheds small filaments at a non-dimensional time of around $t^\prime = 1.9$.  The penetrating plume is symmetric at early times, with symmetry breaking occurring at longer times due to increasingly chaotic behavior. The instability further proceeds to a periodic flapping. At longer times, the instability transitions to a chaotic mixing stage.  Notice that there are strong vortex roll-ups generated near the thin filaments (e.g see vorticity in panels (f), (g), (h)) which influence the onset and progression of chaotic behavior in RTI.  As this is a finite surface tension case these filaments are more stable and require high resolution to resolve these fine scale vortical structures near the interface.  

We observe from~\cref{fig:rt2d} that the development of the instability (at longer times) depends on the resolution of shedding filaments. Therefore, the long-time dynamics depend on the interface thickness that the $Cn$ number controls.  \citet{Khanwale2021} presented analysis of how $Cn$ numbers change the long term mixing behavior of RTI.  They performed RTI simulations for $Cn = 0.005, 0.0025, 0.00125$ and showed that for $Cn = 0.00125$ the mixing in RTI is substantially different at long times.  In the present work we choose the finest $Cn = 0.00125$ which  requires a very high resolution mesh.  However, in the present work we use an adaptive mesh to capture such a fine $Cn$ interface representation.  First we compare the results of short term with literature in panel (a) of~\cref{fig:RT_2D_comparison} which shows the comparison of locations of the bottom and top front of the interface.  We see an excellent agreement for short times with the literature. Note that for short times the effect of the finite surface tension is not noticeable and the front locations match very well with the literature. This is because for shorter times the dynamics are dominated by large scale flow structures.  We also compare our interface fronts with \citet{Khanwale2021} where RTI simulations were performed with a uniform mesh with finite surface tension (same $We$ number as the present study) and we also see excellent match even for longer times.  Panel~(b) from~\cref{fig:RT_2D_comparison} shows the decay of the energy functional with respect to time demonstrating energy stability for $At = 0.82$.  Panel~(c) from~\cref{fig:RT_2D_comparison} shows the drift of mass which is expected to be as close to zero as possible.  However, we see that for long term behavior the mass drift increases, we attribute this behavior to the interpolation operations during coarsening of the adaptive mesh refinement, this is similar to behavior in panel (d) of \cref{fig:test_case1} and \cref{fig:test_case2}.  

To quantify vortex dynamics at long times in RTI we perform vortex identification using Q-criterion~\citep{Hunt1988}.  \Cref{fig:rt2d_q} shows turbulent instability in RTI.  Panel (a) shows the complex mixing structure of the interface in RTI in the chaotic regime, panel (b) of \cref{fig:rt2d_q} shows the vorticity in the system after a very long time ($t^\prime = t\sqrt{At} = 4.962$), panel (c) shows the Q-criterion at the same time.  To identify vortices we threshold the Q-criterion between $Q=1.9$ and $Q = 2$ which is shown in panel (d) of \cref{fig:rt2d_q}.  It can be clearly seen that a myriad of small scale structures generated near the filaments due to shear instability, and are resolved in our simulation. Note that as this is a 2D case, therefore there is no vortex stretching, instead we have a redistribution of vorticity through the non-linear convection.  It is well known \cite{Kraichnan1967b} that there is a strong inverse cascade in the case of 2D turbulence and small scale structures combine to form large scale vortical structures.  Therefore, it is even more important to resolve these near interface vortical structures which eventually feed in energy to the larger scales to correctly capture the large scale mixing in the system.  This fine scale resolution has a profound influence on the higher-order statistics of Rayleigh-Taylor instability.  We defer a detailed analysis of higher-order statistics to future work. 

To demonstrate the resolution of the adaptive mesh that we are using for this case we show a progressive zoomed insets of the mesh overlayed on top of $\phi$ in \cref{fig:rt2d_mesh}.  Notice that we use an adaptive mesh where the refinement varies from element size of $8/2^{10}$ to $8/2^{14}$, with higher refinement near the interface. This resulted in a maximum problem size of about 0.8 million nodes, which ran on TACC~\Frontera~ with 16 nodes for 40 hours.  The corresponding uniform mesh with a resolution of $8/2^{14}$ resulted in about 33.5 million dof which would be substantially more expensive to run.  


\begin{figure}[H]
	\centering	
	\begin{tabular}{p{0.18\textwidth}p{0.18\textwidth}p{0.18\textwidth}p{0.18\textwidth}p{0.18\textwidth}}
		\subfigure [$t^\prime = 0.0$] {
			\includegraphics[width=\linewidth]{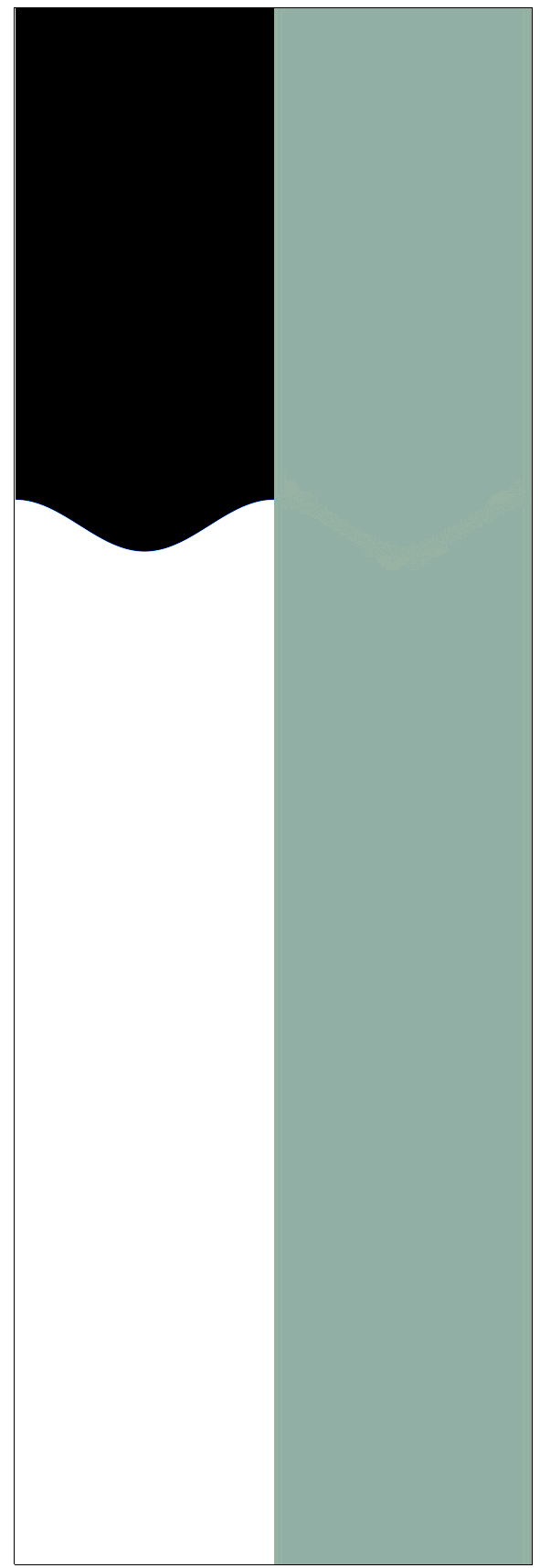}
			\label{subfig:rt_snap_1}
		} &
		\subfigure [$t^\prime = 1.385$] {
			\includegraphics[width=\linewidth]{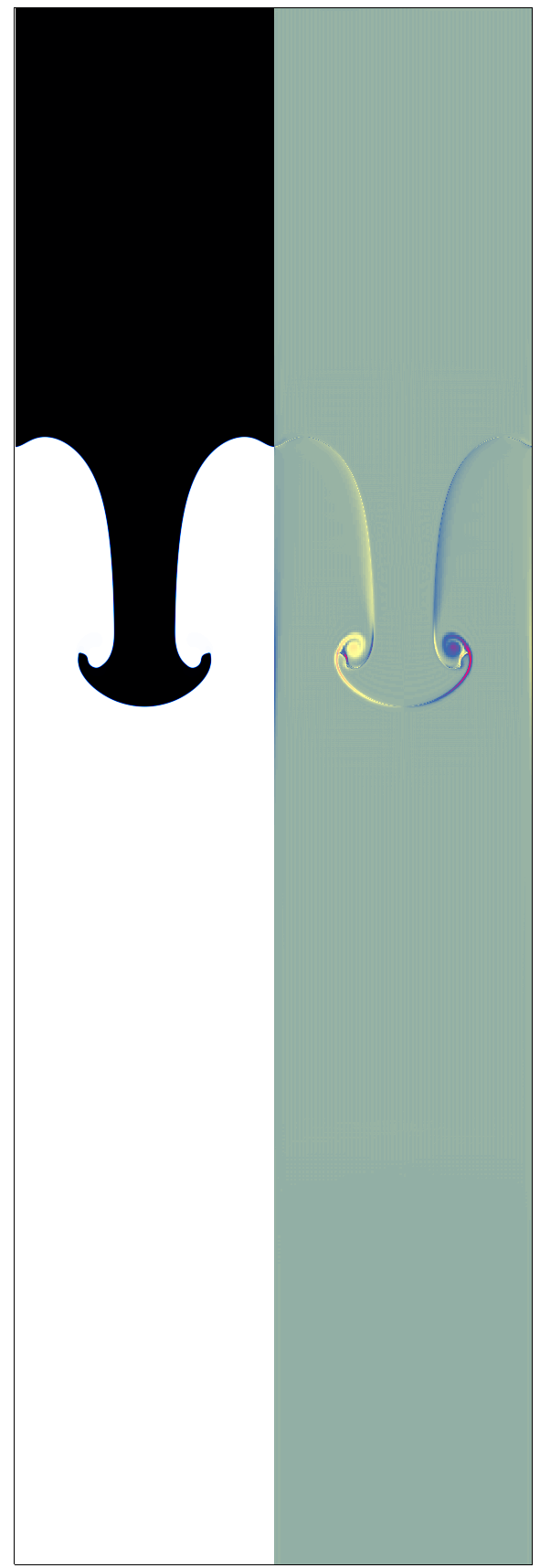}
			\label{subfig:rt_snap_2}
		} & 
		\subfigure [$t^\prime = 1.566$] {
			\includegraphics[width=\linewidth]{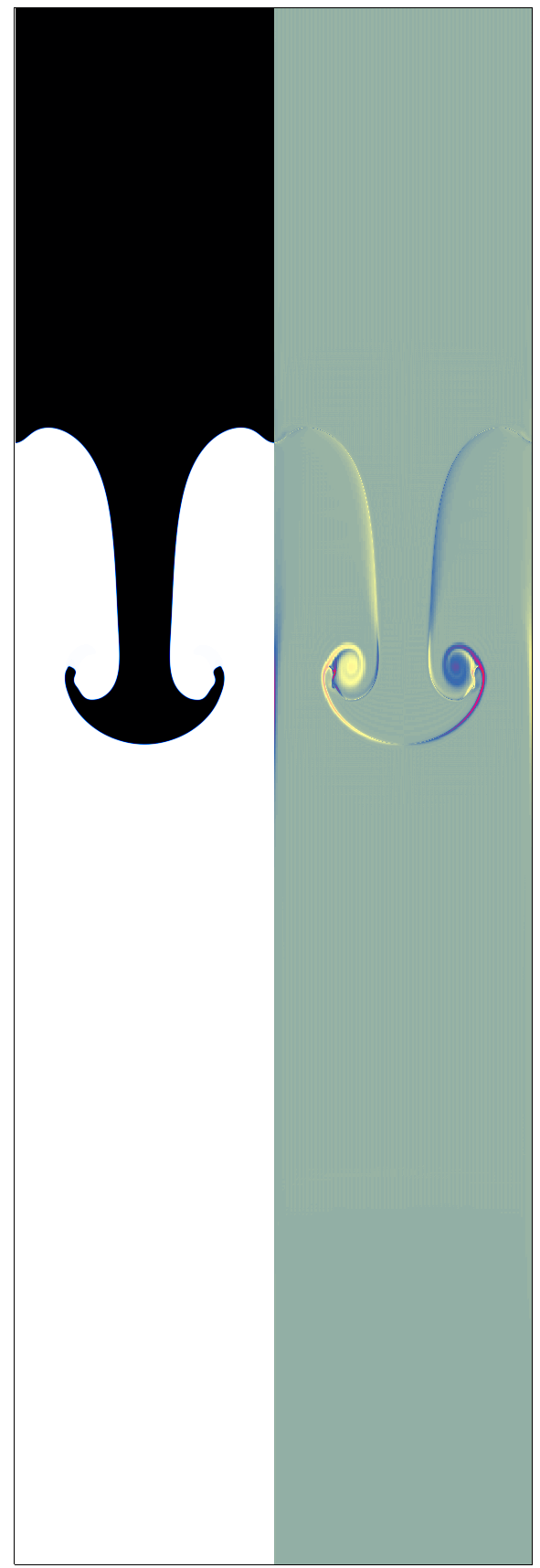}
			\label{subfig:rt_snap_3}
		} &
		
		\subfigure [$t^\prime = 1.847$] {
			\includegraphics[width=\linewidth]{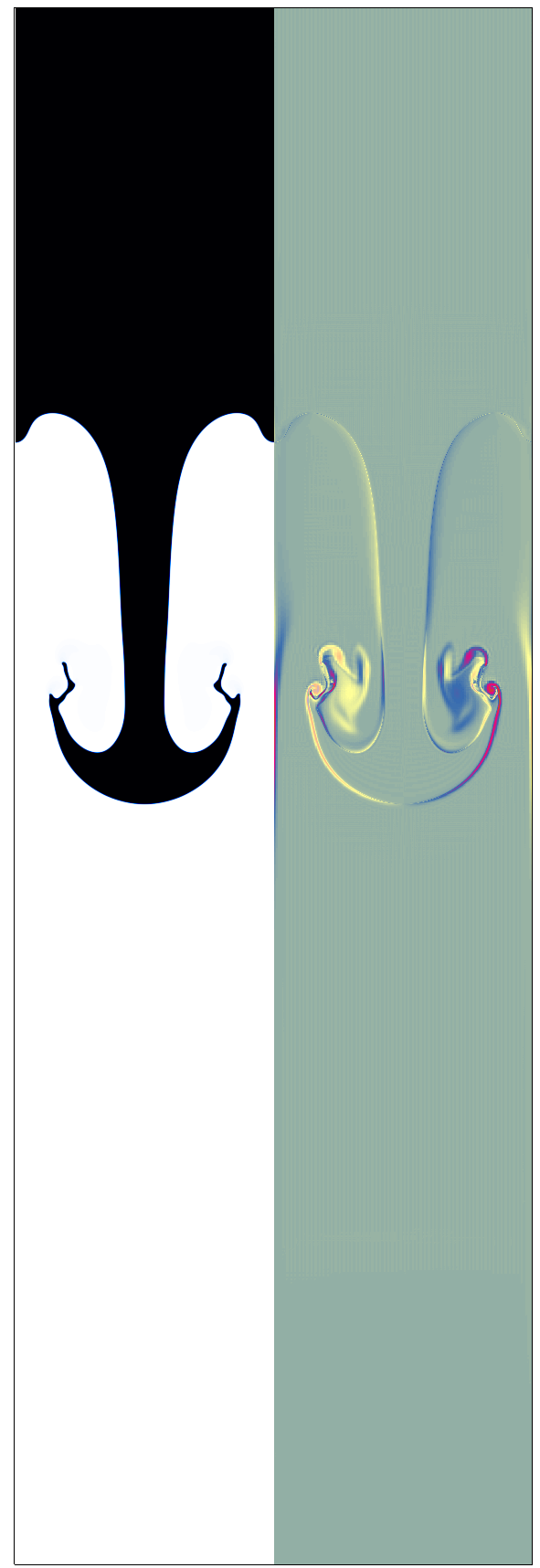}
			\label{subfig:rt_snap_4}
		} &
		\subfigure [$t^\prime = 2.100$] {
			\includegraphics[width=\linewidth]{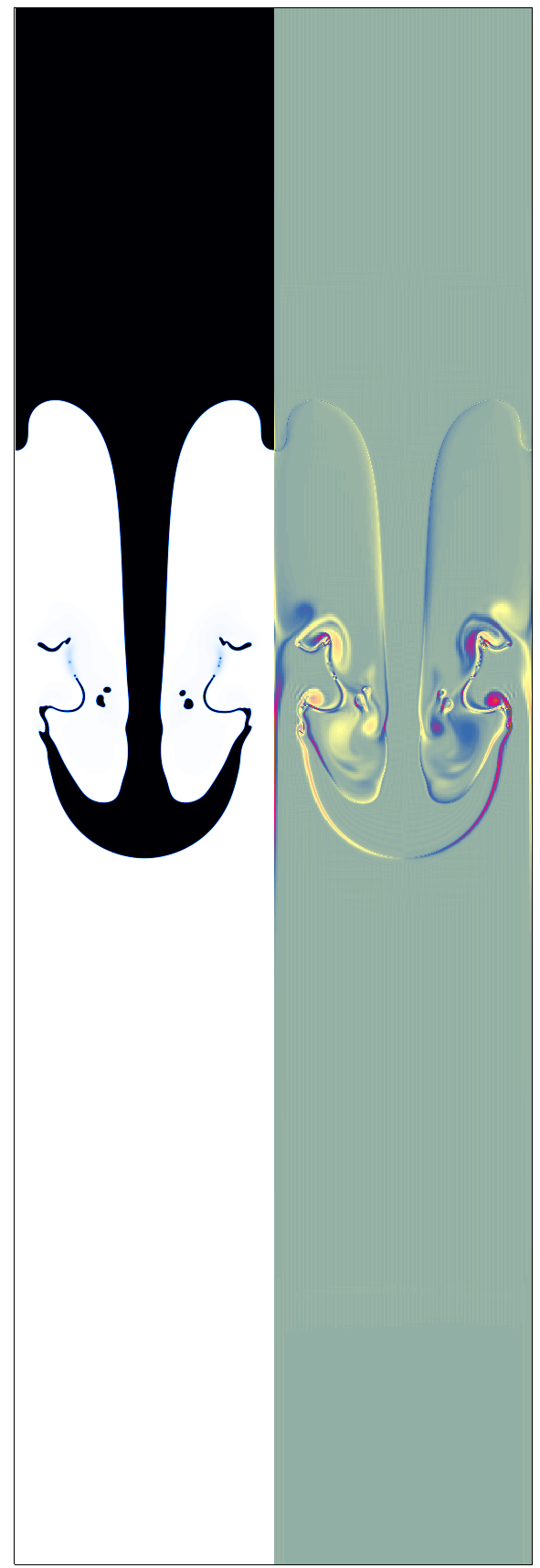}
			\label{subfig:rt_snap_5}
		} \\ 
		\subfigure [$t^\prime = 2.644$] {
			\includegraphics[width=\linewidth]{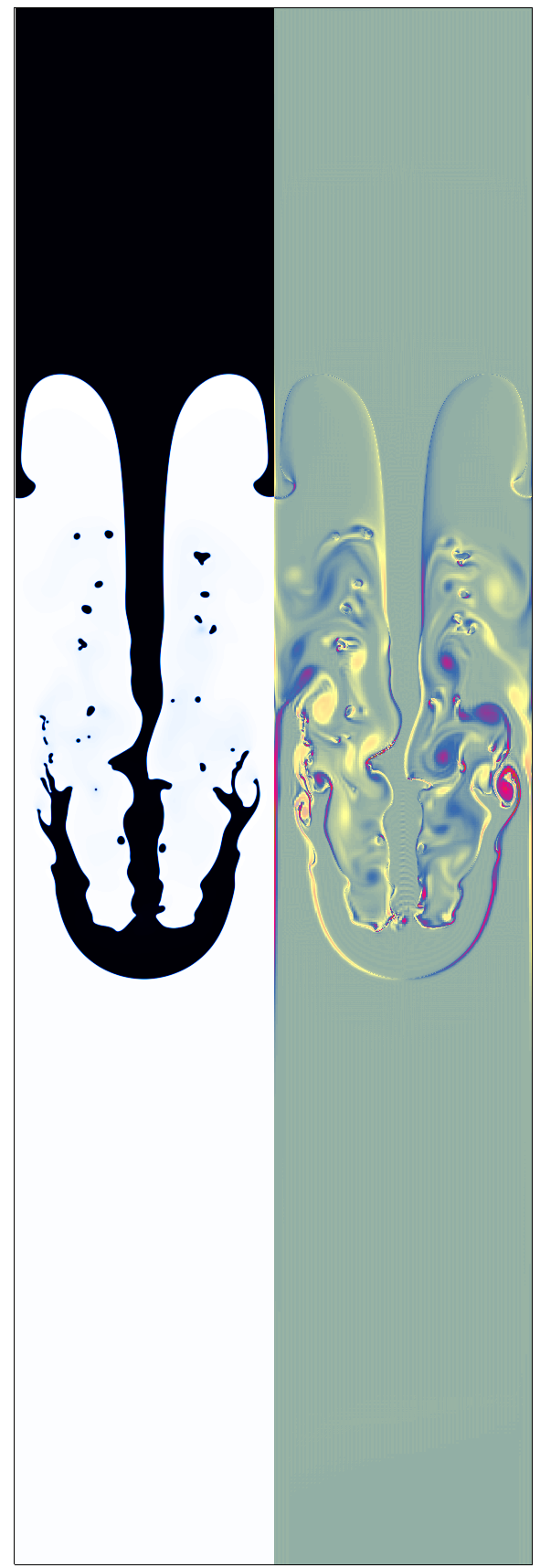}
			\label{subfig:rt_snap_6}
		} &
		
		\subfigure [$t^\prime = 3.006$] {
			\includegraphics[width=\linewidth]{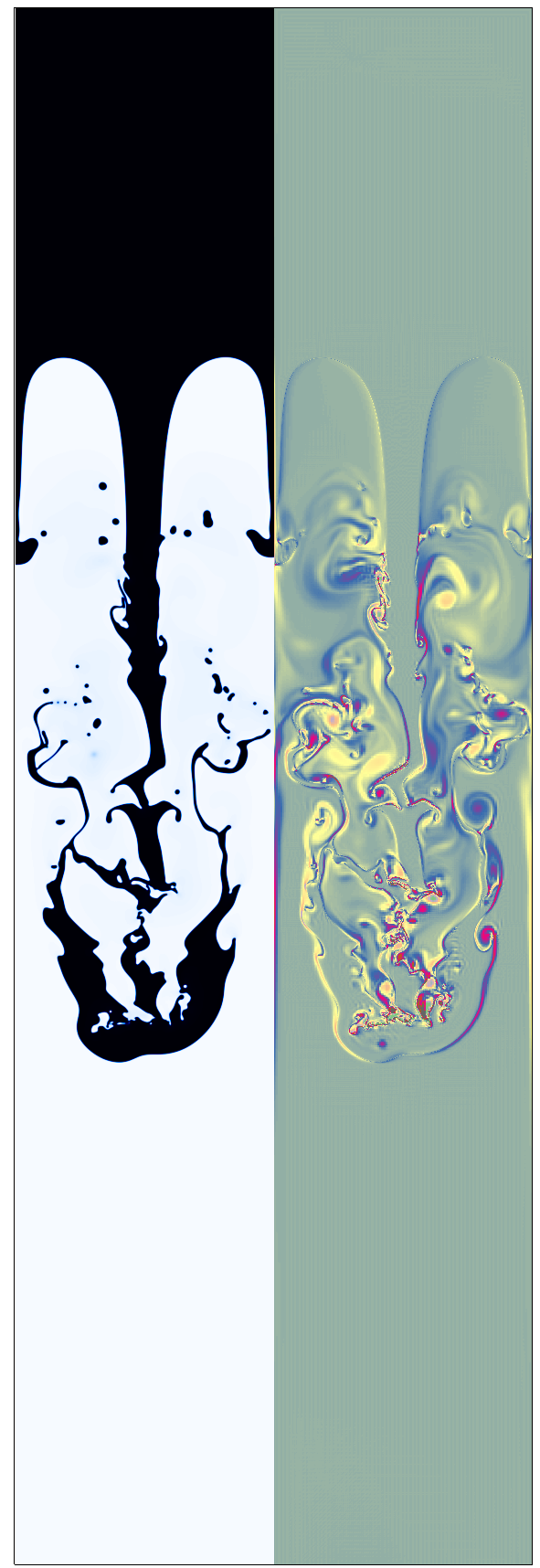}
			\label{subfig:rt_snap_7}
		} &
		\subfigure [$t^\prime = 3.549$] {
			\includegraphics[width=\linewidth]{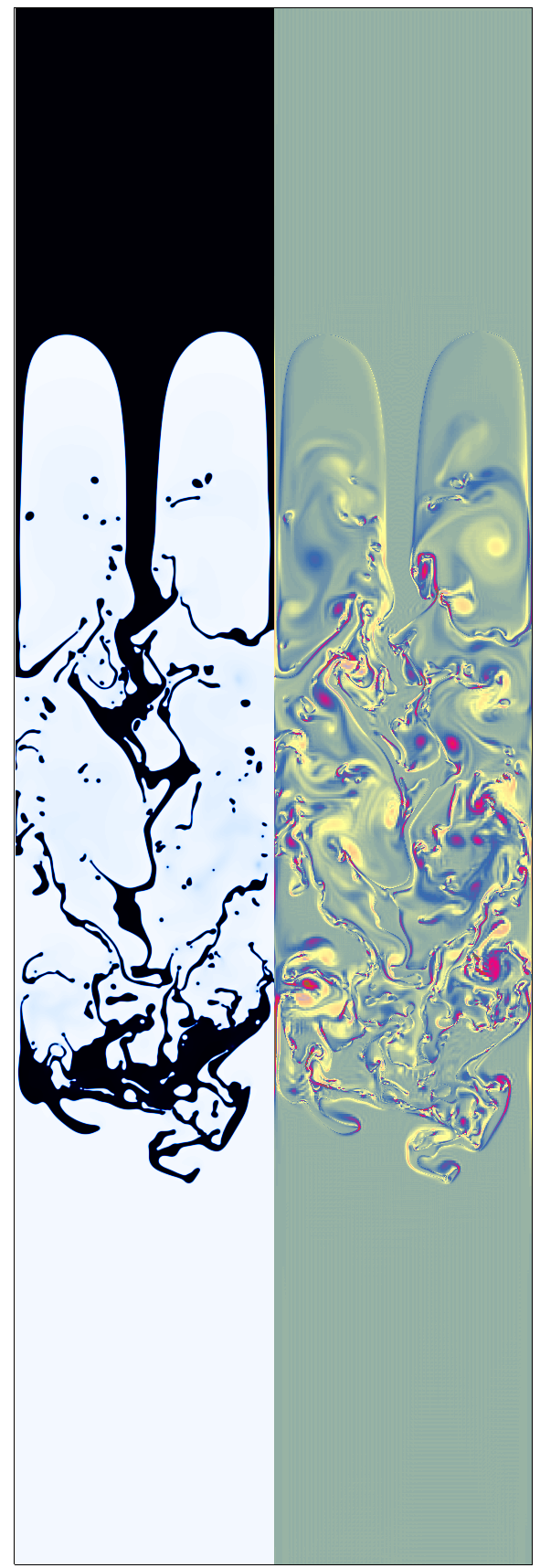}
			\label{subfig:rt_snap_8}
		} & 
		\subfigure [$t^\prime = 4.238$] {
			\includegraphics[width=\linewidth]{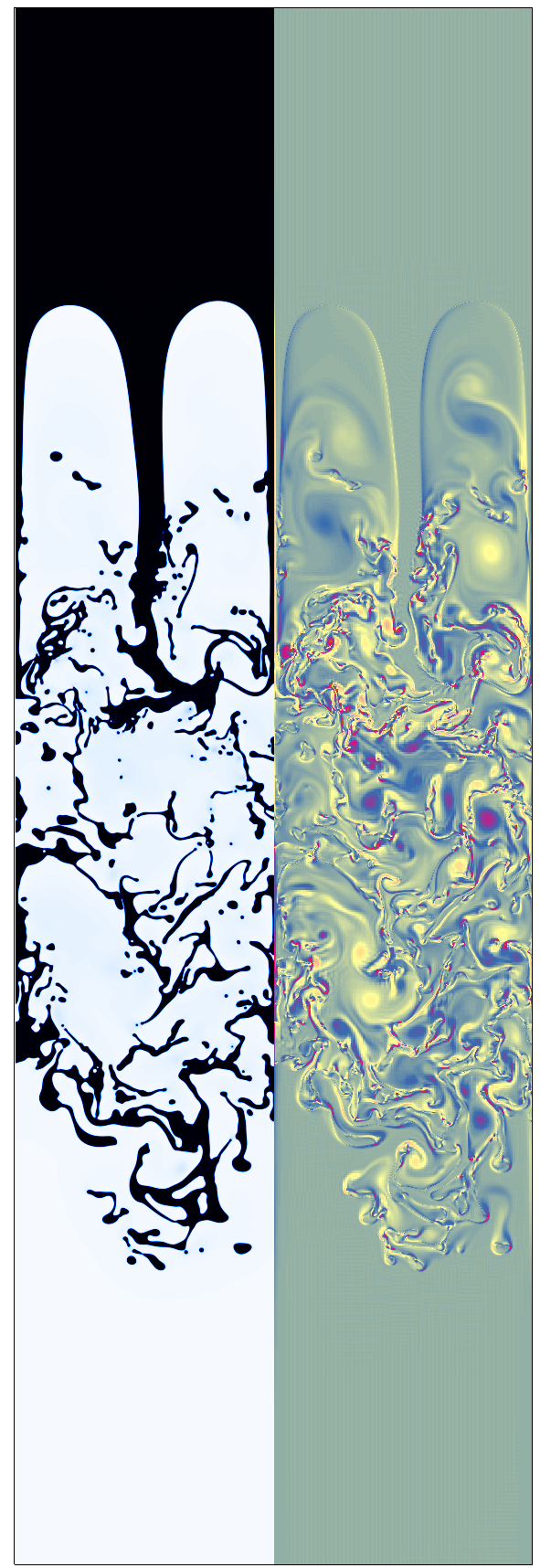}
			\label{subfig:rt_snap_9}
		} &
		\subfigure [$t^\prime = 4.962$] {
			\includegraphics[width=\linewidth]{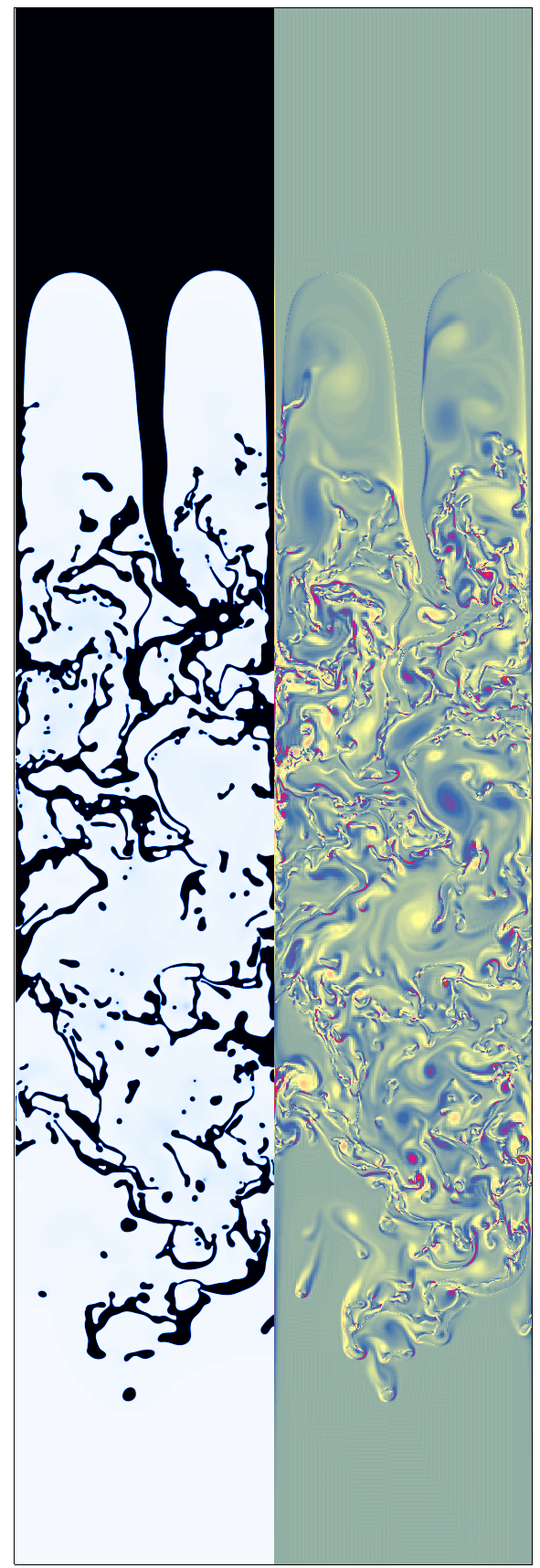}
			\label{subfig:rt_snap_10}
		}  
	\end{tabular}
	\caption{\textit{Rayleigh-Taylor instability in 2D} (\cref{subsec:rayleigh_taylor_2D}): Dynamics of the interface as a function of time for $At = 0.82$ (density ratio of 0.1).  In each panel the left plot illustrates the interface, and the right plot shows corresponding vorticity. Here normalised time $t' = t \sqrt{At}$}
	\label{fig:rt2d}
\end{figure}


\begin{figure}[H]
	\centering
	\tikzexternaldisable
	\begin{tikzpicture}[]
	\begin{axis}[width=0.5\linewidth,scaled y ticks=true,xlabel={Normalised time $t' = t \sqrt{At}$(-)},ylabel={Position(-)},legend style={nodes={scale=0.65, transform shape}}, xmin=0, xmax=3, ymin=-3.5, ymax=2.0, ytick distance=1.0,  xtick={0.0, 0.5, 1.0, 1.5, 2, 2.5, 3.0}, title={(a)},
	legend style={nodes={scale=0.95, transform shape}, row sep=2.0pt},
	legend entries={
		\citet{Xie2015}~($At = 0.82$), 
		\citet{Khanwale2021}~($At = 0.82$~$Cn = 0.00125$~Uniform Mesh),
		\citet{Khanwale2021}~($At = 0.82$~$Cn = 0.00125$~Uniform Mesh),
		present study bottom front ($At = 0.82$~$Cn = 0.00125$~AMR), 
		present study top front ($At = 0.82$~$Cn = 0.00125$)~AMR},
	legend pos= outer north east,
	legend image post style={scale=1.0},
	legend style={font=\footnotesize}
	]
	\addplot [only marks,mark size = 1.5pt, mark=triangle*, color=black, each nth point=3, filter discard warning=false, unbounded coords=discard] table [x={time},y={position},col sep=comma] {Figures/RT_2D/xie_combined_At0dot82.csv};
	\addplot [only marks,mark size = 1.0pt, mark=square, color=black, each nth point=1, filter discard warning=false, unbounded coords=discard]table [x={time},y={bottomFront},col sep=comma] {Figures/RT_2D/extentsRTinstability_At0dot82_Cn0dot00125.csv};
	\addplot [only marks,mark size = 1.0pt, mark=square, color=black, each nth point=1, filter discard warning=false, unbounded coords=discard]table [x={time},y={topFront},col sep=comma] {Figures/RT_2D/extentsRTinstability_At0dot82_Cn0dot00125.csv};
	\addplot [line width=0.25mm, color = blue, solid]table [x={scaledtime},y={bottomFront},col sep=comma, each nth point=1, filter discard warning=false, unbounded coords=discard] {Figures/RT_2D/extentsRTinstability_At0dot82_Cn0dot00125_AMR.csv};
	\addplot [line width=0.25mm, color = blue, solid]table [x={scaledtime},y={topFront},col sep=comma, each nth point=1, filter discard warning=false, unbounded coords=discard] {Figures/RT_2D/extentsRTinstability_At0dot82_Cn0dot00125_AMR.csv};
	\end{axis}
	\end{tikzpicture}
	
	\tikzexternalenable
	\includegraphics{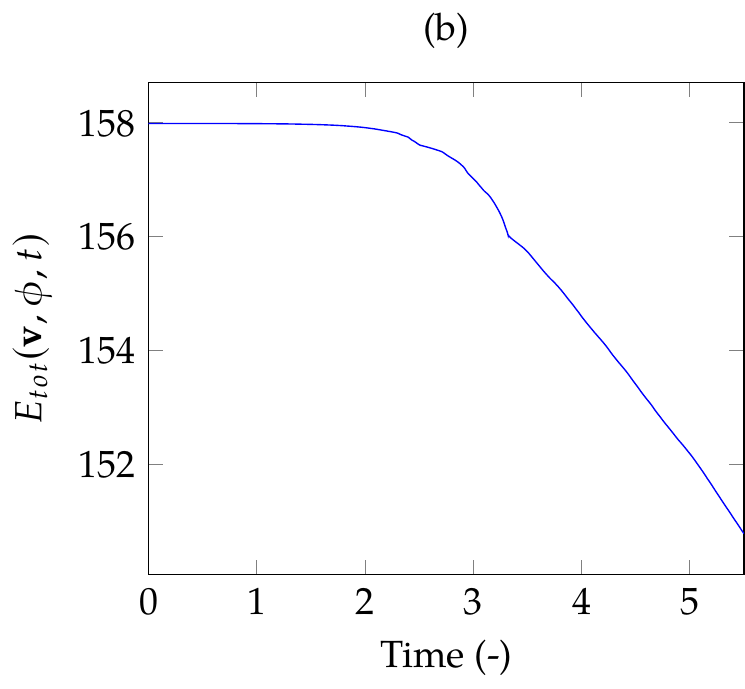}
	\hskip 5pt
	\includegraphics{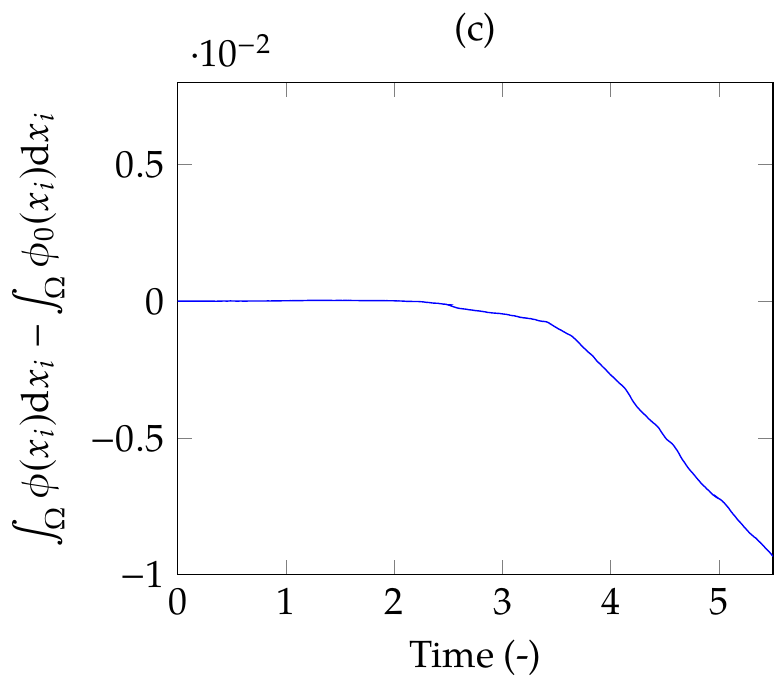}
	\caption{\textit{Rayleigh-Taylor instability 2D} (\cref{subsec:rayleigh_taylor_2D}): (a) Comparison of positions of top and bottom front of the interface with literature; (b) decay of the energy functional illustrating Energy Stability for $At = 0.82$; (c) total mass conservation (integral of total $\phi$ - initial mass) for $At = 0.82$. Note the excellent mass conservation across $\sim 260,000$ time steps. As stated in the text, most of this deviation is attributed to the non-mass conserving interpolation schemes used to map solutions between two consecutive meshes.}
	\label{fig:RT_2D_comparison}
\end{figure}

\begin{figure}[H]
	\centering	
	\includegraphics[width=\linewidth]{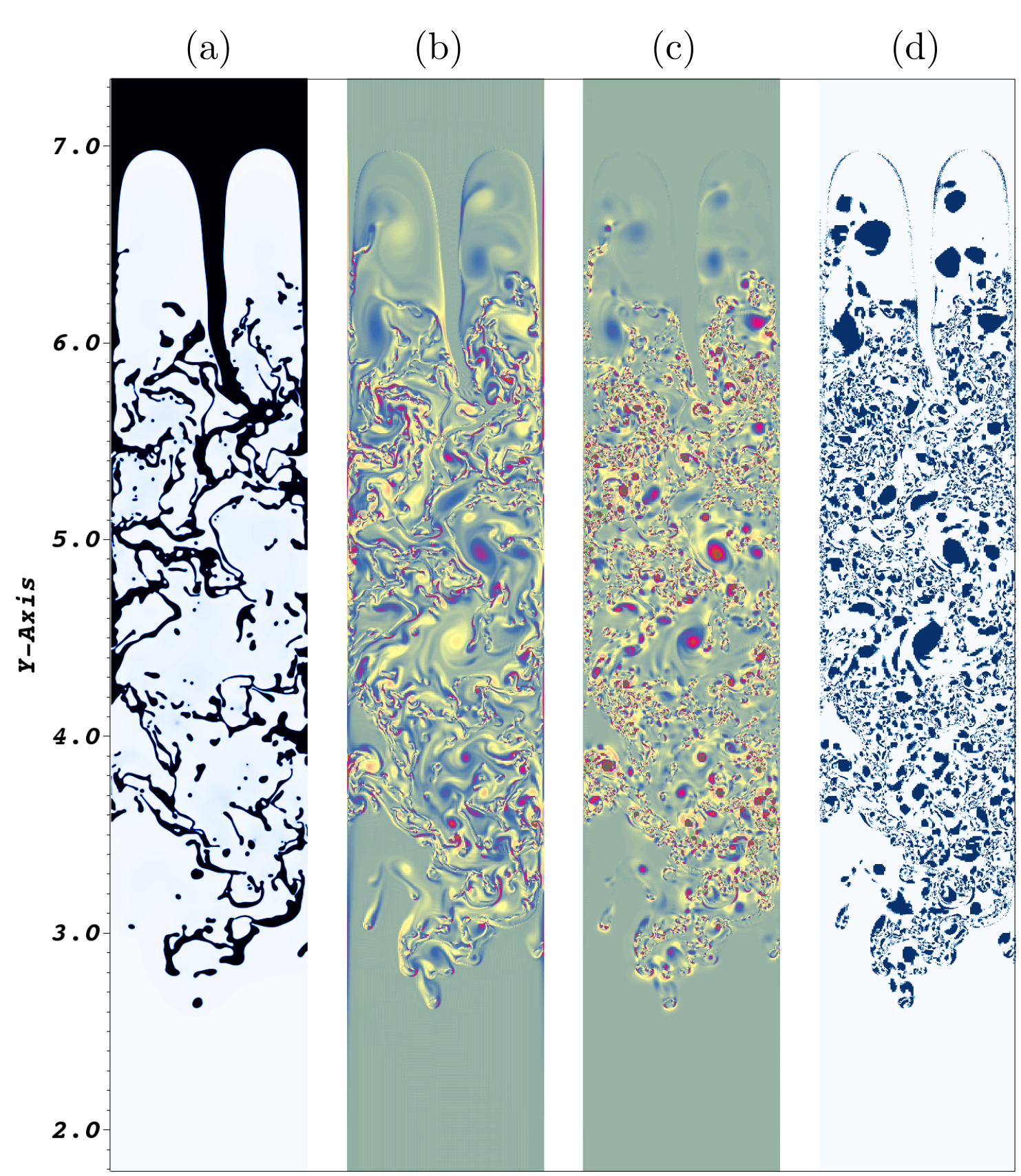}
	\caption{\textit{Rayleigh-Taylor instability in 2D} (\cref{subsec:rayleigh_taylor_2D}): turbulent dynamics in Rayleigh Taylor instability for $At = 0.82$ (density ratio of 0.1) at $t^\prime = t\sqrt{At} = 4.962$.  (a) Shows the interface; (b) shows vorticity (c) Q-criterion~\citep{Hunt1988} (d) Q-criterion threshold between $Q = 1.9$ and $Q = 2$ }
	\label{fig:rt2d_q}
\end{figure}

\begin{figure}[H]
	\centering	
	\includegraphics[width=\linewidth]{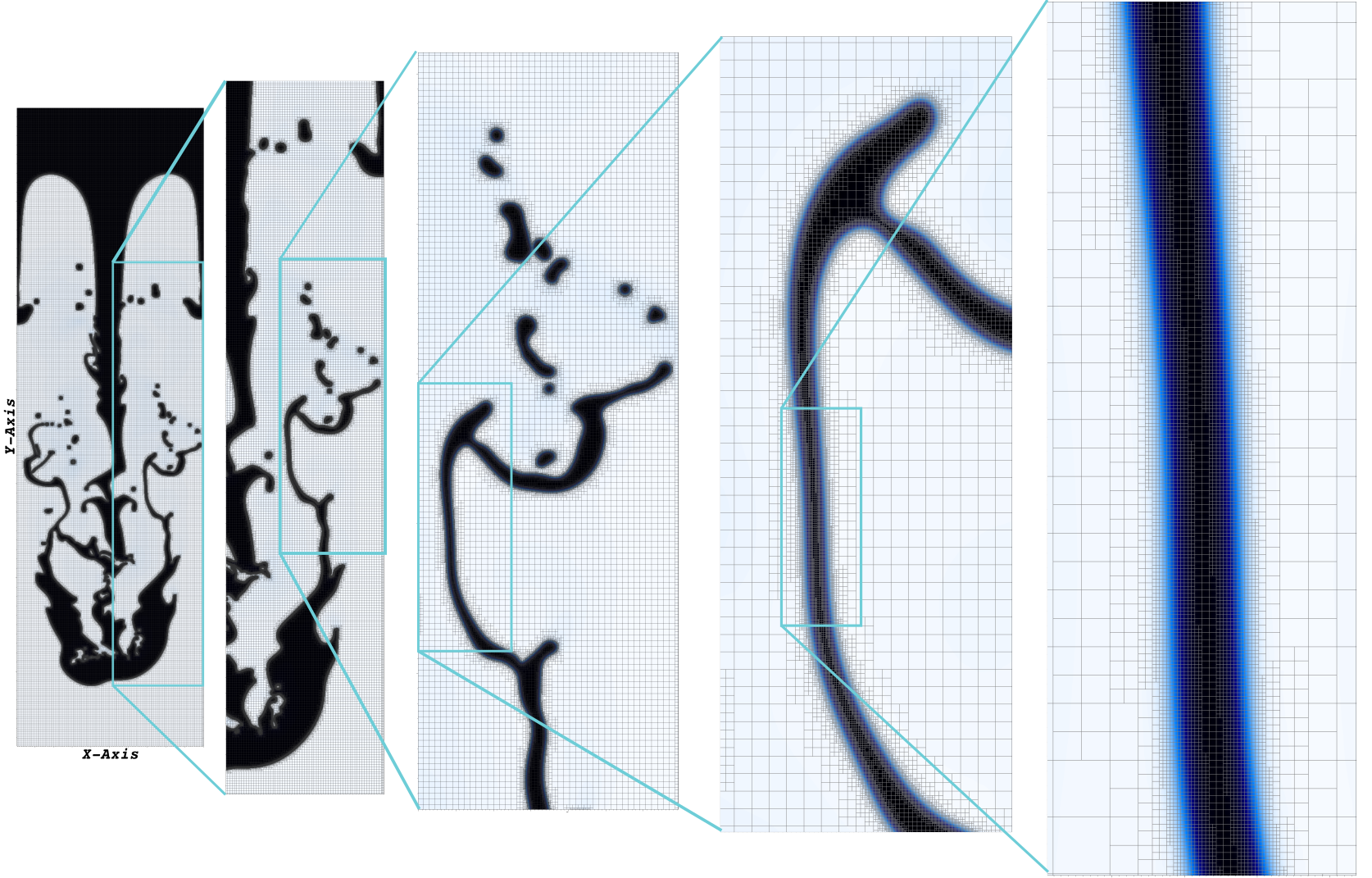}
	\caption{\textit{Rayleigh-Taylor instability in 2D} (\cref{subsec:rayleigh_taylor_2D}): Progressive zoomed in mesh for Rayleigh Taylor instability for $At = 0.82$ (density ratio of 0.1) at $t^\prime = t\sqrt{At} = 3.006$. These plots are zoomed insets of the domain near the interfacial instabilities.}
	\label{fig:rt2d_mesh}
\end{figure}

\subsection{3D simulations: Rayleigh-Taylor instability}
\label{subsec:rayleigh_taylor_3d}
We now deploy our framework in 3D and simulate the Rayleigh-Taylor instability in 3D using adaptive octree meshes.  The domain is set to be cuboid with dimensions $1 \times 8 \times 1$ in $\mathrm{x_1}$, $\mathrm{x_2}$, $\mathrm{x_3}$ directions respectively.  For the 3D simulations we choose 
the following initial condition for $\phi$ to describe the interface:
\begin{align}
\phi(\vec{x}) &= \tanh\left(\sqrt{2} \left[ \frac{x_2 - l - g\left(\vec{x}\right)}{Cn}\right]\right),\\ 
g(\vec{x}) &= 0.05 \left[\cos\left(2 \pi x_1 \right) + \cos\left(2 \pi x_3 \right)\right].
\label{eq:initialConditionRT}
\end{align}
Here $l$ is the location in the vertical direction for the interface, which in this case is chosen to be $\mathrm{x_2} = 6$. 
\Cref{subfig:rt3d_int_At15_snap_1} shows the initial condition. We use a $Cn=0.0075$ and $At = 0.15$.  For this lower $At$ number simulation, the effect of non-zero surface tension is important.  The non-dimensionalization follows the same logic as the 2D cases, with the Reynolds number set to $Re = 1000$, the Weber number ($We = \rho_c g D^2/\sigma$) set to $1000$, and viscosity ratio, $\nu_{+}/\nu_{-}$, set to 1. For an Atwood number $At = 0.15$ the density ratio is $\rho_{+}/\rho_{-} = 1.0/0.74$.  Similar to the 2D cases, the boundary conditions are no-slip for velocity and no-flux for $\phi$ and $\mu$ on all the walls.  This is different than the typical boundary condition of free slip on the side walls, which is used in the literature~\citep{Tryggvason1990,Liang2016,Jain2020}.  Another important thing to note is that generally zero surface tension is used, however in the current study as well as in \citet{Liang2016} the surface tension is small but finite.  We choose the no slip velocity on all walls to test the energy stability of the method, which requires the no-slip boundary conditions (see \citep{Shen2015,Khanwale2020,Khanwale2021} for the setup of the proof).    

We choose a time-step size of $\dt = 0.001$.  We refine near the interface to a level corresponding to element length of $8/2^{12}$, ensuring about six elements for resolving the diffuse interface, while the refinement away from the interface is $8/2^7$, we also refine near the walls with a level of $8/2^9$.  
In \ref{subsec:app_rt2d} we provide a detailed description of the preconditioners and linear solvers along with the command-line arguments that are used. 

\Cref{fig:rt3d_mesh_At15} shows the evolution of the interface along with the solution-adapted mesh.  We color the mesh with the order parameter value (blue for the heavy fluid and white for the light fluid) to show the  evolution of the system.  It is seen that as the interface evolves it deforms and expands, causing the mesh density to gradually increase.  This gradual growth helps with the efficiency of the simulation, since a uniform mesh for this case would be computationally expensive.  At the point with the largest interfacial area, the simulation has around 52 million nodes, corresponding to 152 million degrees of freedom for the largest matrix assembly (of velocity prediction).  The efficient and scalable implementation of the approach allows us to run this large scale simulation.  We run the simulation till $t = 6.8$ on SDSC Expanse with 32 nodes (4096 processes).  The simulation is subsequently run on TACC~\Frontera~with 80 Cascade Lake nodes (4480 processes).   We use this case to perform scalability analysis which is presented later in the scaling section (\cref{sec:scaling}).  

\Cref{fig:rt3d_interface_At15} shows that the initial sinusoidal perturbation develops into penetrating plume of heavy fluid pushing down while the lighter fluid buckles and forms bubbles near the wall. The simulation maintains symmetry.  As different parts of the interface move in opposite directions, Kelvin-Helmholtz instabilities cause the plumes to roll up, which in turn causes causes mushroom-like structures to develop (see~\cref{subfig:rt3d_int_At15_snap_4}).  However, the Kelvin-Helmholtz instability is arrested a bit due to finite surface tension curbing breakup as seen in \cref{subfig:rt3d_int_At15_snap_6}. This behavior is also presented in \citet{Liang2016} who also used a CHNS model deployed using a Lattice Boltzman method. However, for the case of zero surface tension \citet{Jain2020} show breakup at the same time $t  = 7.8$.  Another feature that is unique to our simulations compared to the simulations in the literature is the development of the bubbles inside the domain instead of at the walls. This is due to the no-slip conditions of velocity on the side walls instead of free-slip.       
Although these two types of spikes (upwards/downwards) begin to develop in a checkerboard pattern that preserves symmetry, their dynamics are different due to the velocity differential that the two fronts face.  

The downward spikes undergo further deformation and we see the emergence of four protrusions from the mushroom structure (see~\cref{subfig:rt3d_int_At15_snap_6}) caused by the shear generated between the fluids. \citet{Liang2016} and~\citet{Jain2020}  also report four secondary filaments (but with breakup) in their simulations for the same $At$ number, although their simulations were for zero surface tension (i.e., dynamics similar to miscible systems).  

On the other hand, the mushroom structures from the upward spikes develop into long and thin circular films.  The upward spikes develop circular films adjacent to the wall "bubbles" (i.e., structures near the wall that the heavier fluid generates as it is displaced by the lighter one) interact with these bubbles to merge and form larger structures (see~\cref{subfig:rt3d_int_At15_snap_6}). While the central plumes have little-to-no interaction with the wall, the bubbles continue to rise and ultimately collide with the top wall. 

We compare the front location of the downward spike with \citet{Liang2016} in \cref{fig:RT_3d_comp}. We see a slight deviation from \citet{Liang2016} which we attribute to the no-slip boundary conditions. Panel~(a) from~\cref{fig:RT_3d_energy} shows the decay of the energy functional with respect to time demonstrating energy stability on the fully discrete 3D AMR meshes.  Panel~(b) from~\cref{fig:RT_3d_energy} shows the drift of mass which is expected to be as close to zero as possible.  However, we see that for long term behavior the mass drift increases, we attribute this behavior to the interpolation operations during coarsening of the adaptive mesh refinement.


\begin{figure}[]
	\centering	
	\begin{tabular}{p{0.18\textwidth}p{0.18\textwidth}p{0.18\textwidth}}
		\subfigure [$t = 0.0$] {
			\includegraphics[width=\linewidth]{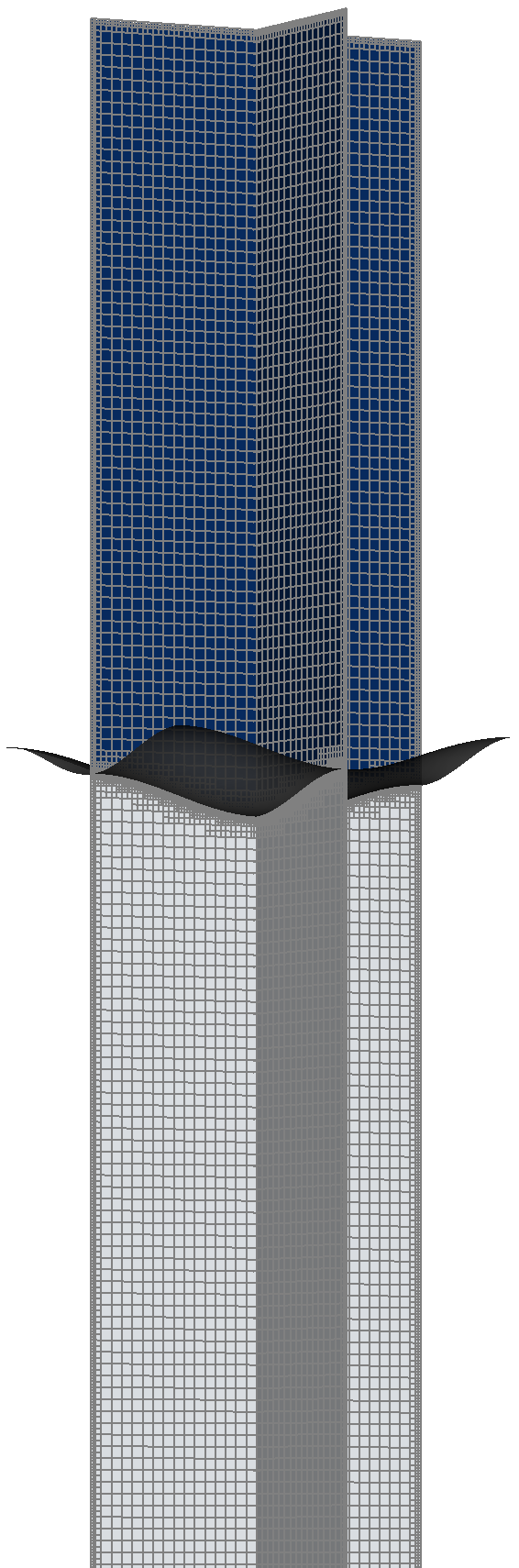}
			\label{subfig:rt3d_mesh_At15_snap_1}
		} &
		\subfigure [$t = 2$] {
			\includegraphics[width=\linewidth]{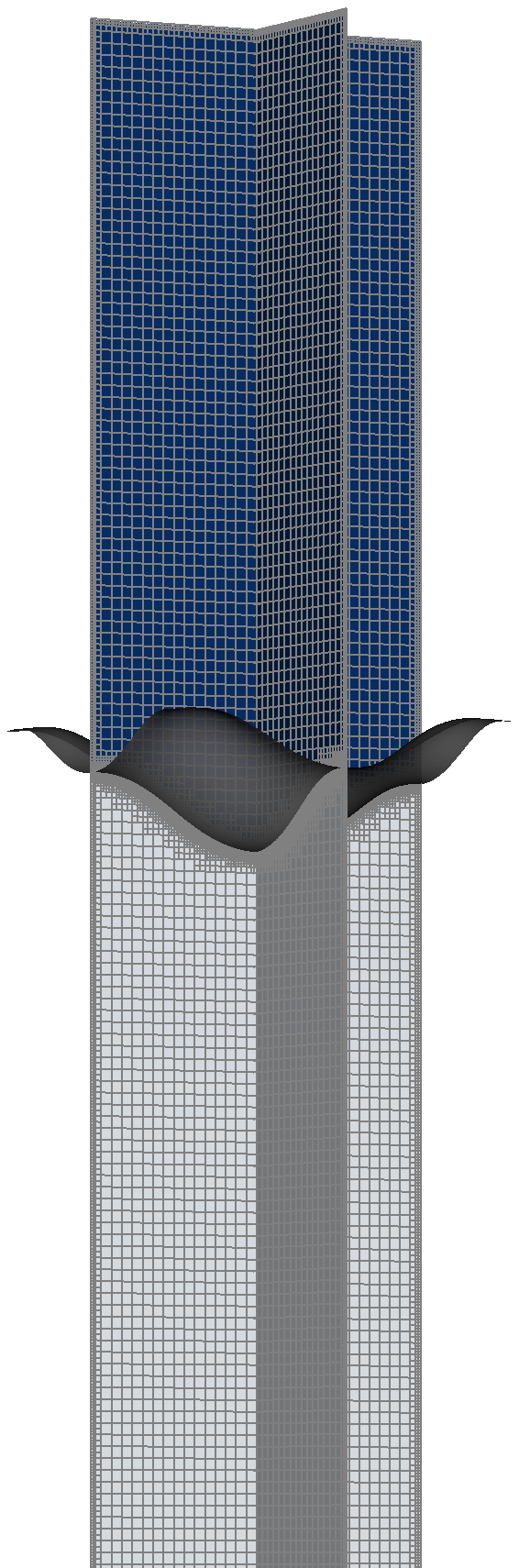}
			\label{subfig:rt3d_mesh_At15_snap_2}
		} & 
		\subfigure [$t = 3.92$] {
			\includegraphics[width=\linewidth]{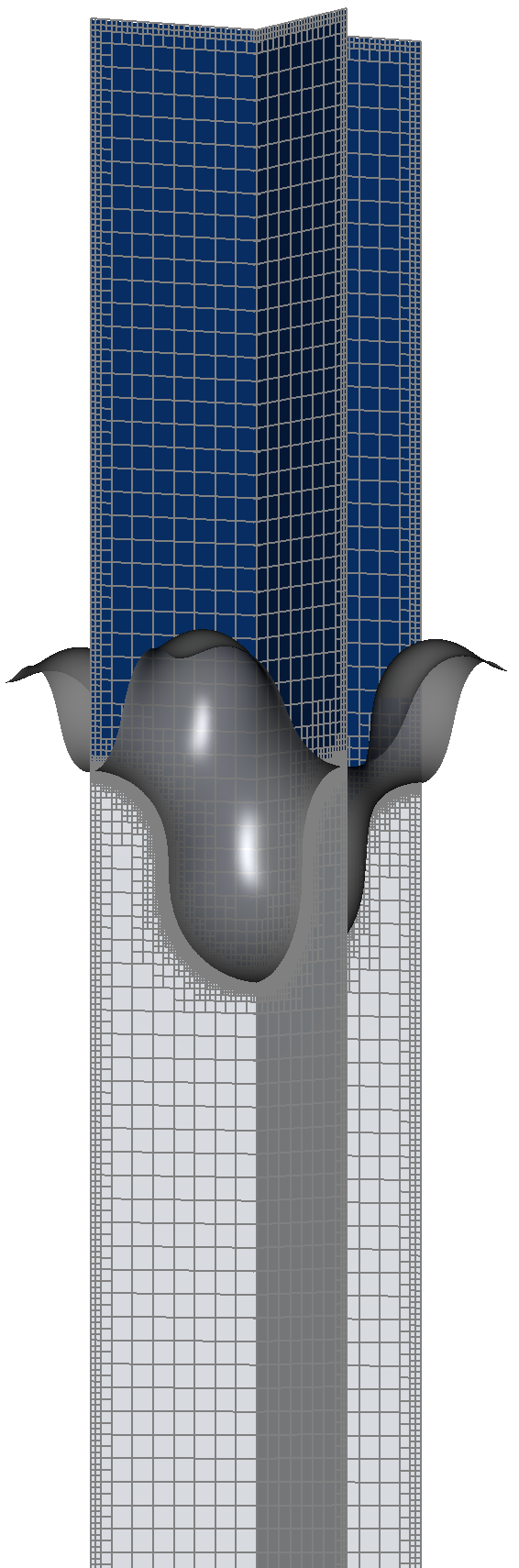}
			\label{subfig:rt3d_mesh_At15_snap_3}
		} \\
		
		\subfigure [$t = 5.84$] {
			\includegraphics[width=\linewidth]{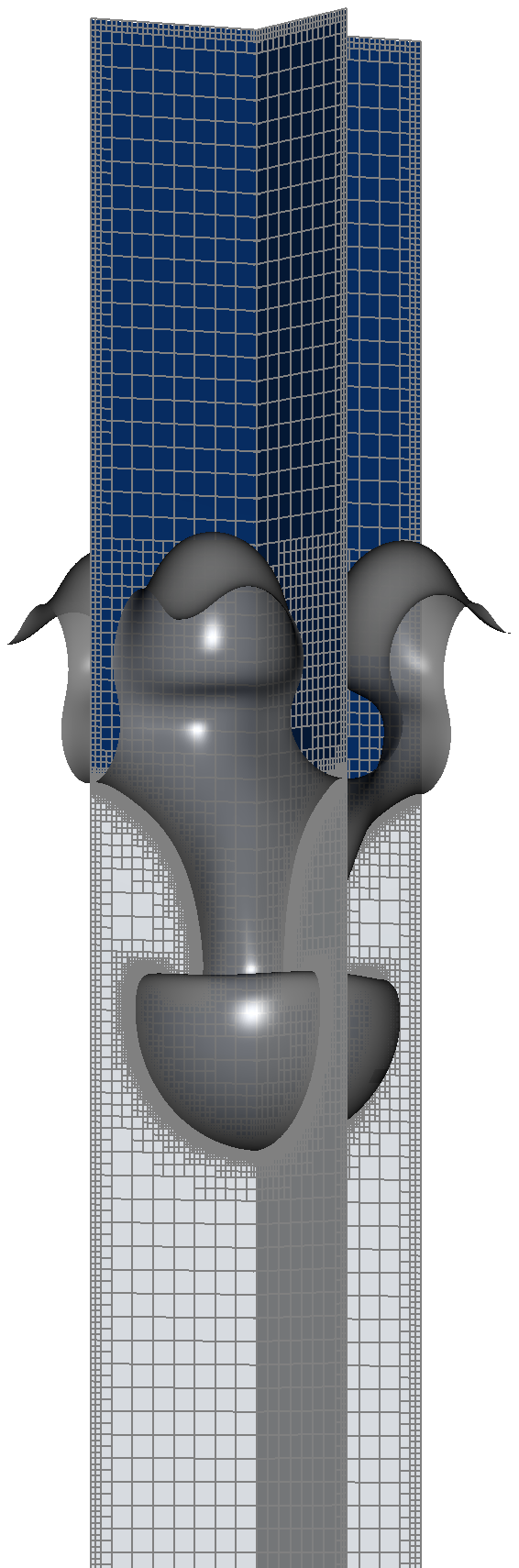}
			\label{subfig:rt3d_mesh_At15_snap_4}
		} &
		\subfigure [$t = 6.8$] {
			\includegraphics[width=\linewidth]{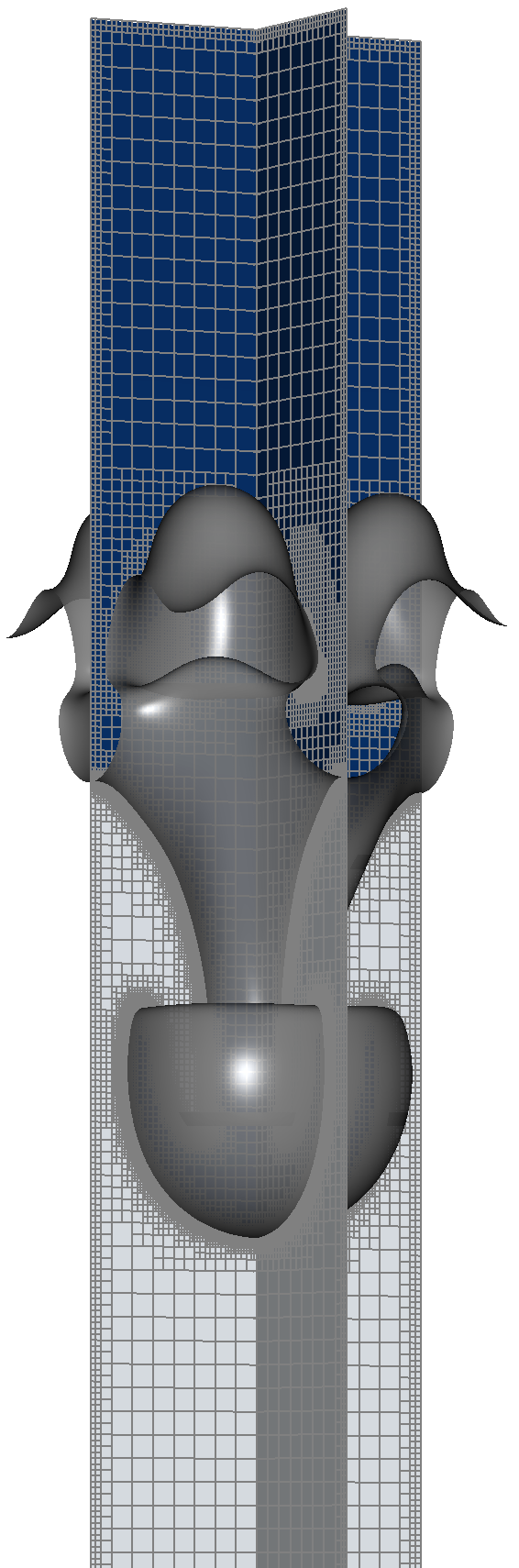}
			\label{subfig:rt3d_mesh_At15_snap_5}
		} & 
		\subfigure [$t = 7.84$] {
			\includegraphics[width=\linewidth]{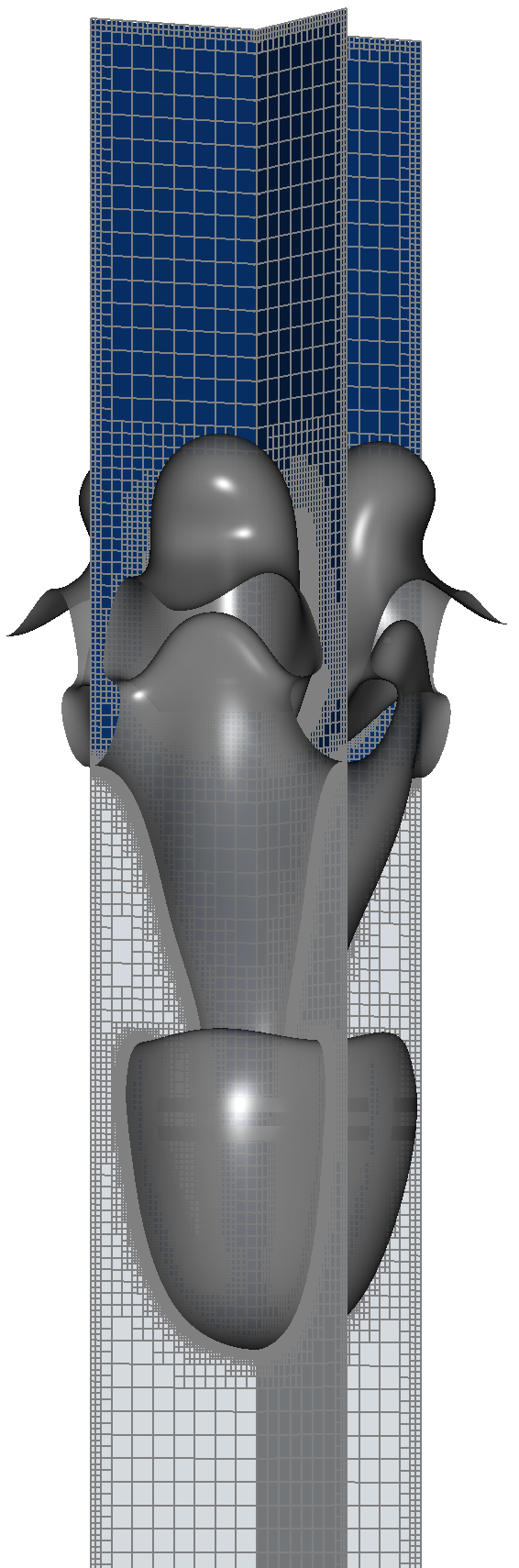}
			\label{subfig:rt3d_mesh_At15_snap_6}
		} 
%
	\end{tabular}
	\caption{\textit{Rayleigh-Taylor instability in 3D} (\cref{subsec:rayleigh_taylor_3d}): Snapshots of the mesh at various time-points in the simulation for Rayleigh-Taylor instability for $At = 0.15$.  The figures show the mesh on two slices to illustrate the refinement around the interface of two fluids represented by the gray iso-surface of $\phi = 0$. The phase field $\phi$ values color the mesh, where blue represents heavy fluid and white represents light fluid.  Here $t $(-) is the non-dimensional time.}
	\label{fig:rt3d_mesh_At15}
\end{figure}

\begin{figure}[]
	\centering	
	\begin{tabular}{p{0.25\textwidth}p{0.25\textwidth}p{0.25\textwidth}}
		\subfigure [$t = 0.0$] {
			\includegraphics[width=\linewidth]{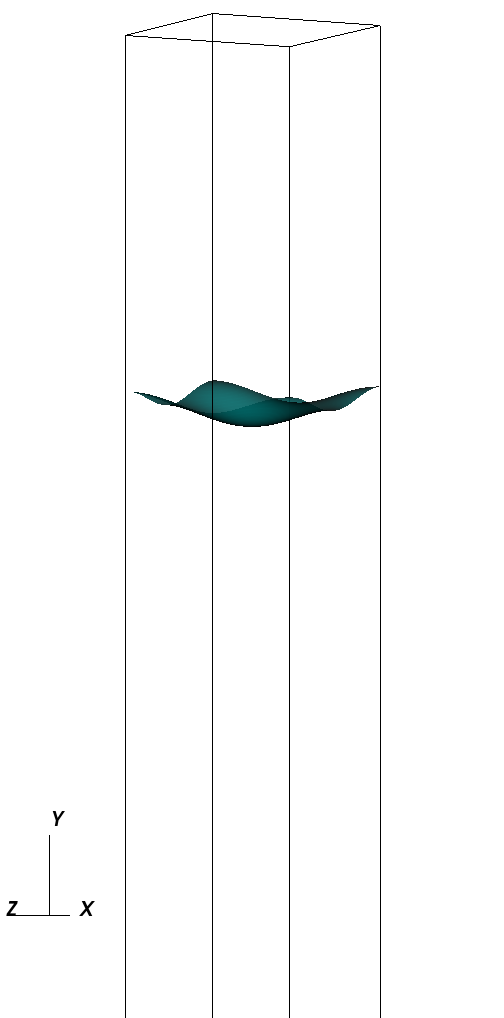}
			\label{subfig:rt3d_int_At15_snap_1}
		} &
		\subfigure [$t = 2$] {
			\includegraphics[width=\linewidth]{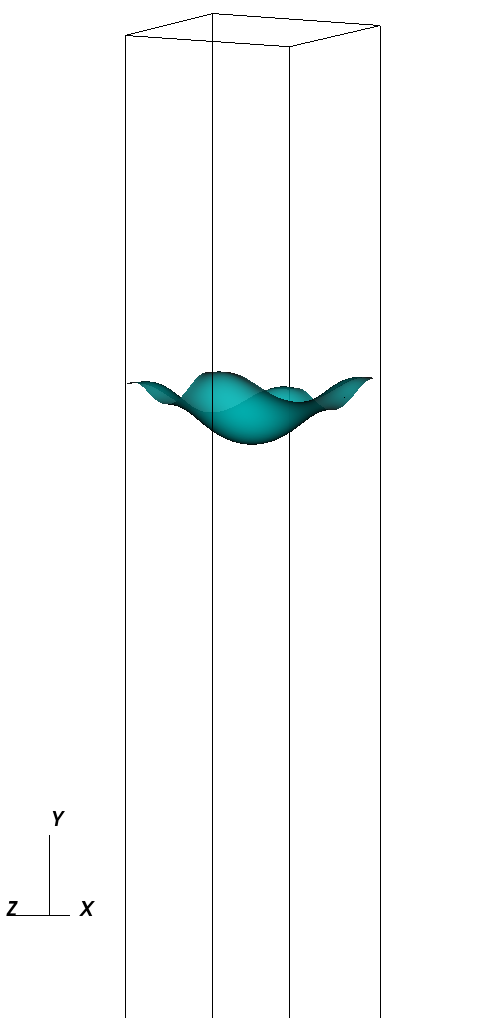}
			\label{subfig:rt3d_int_At15_snap_2}
		} & 
		\subfigure [$t = 3.84$] {
			\includegraphics[width=\linewidth]{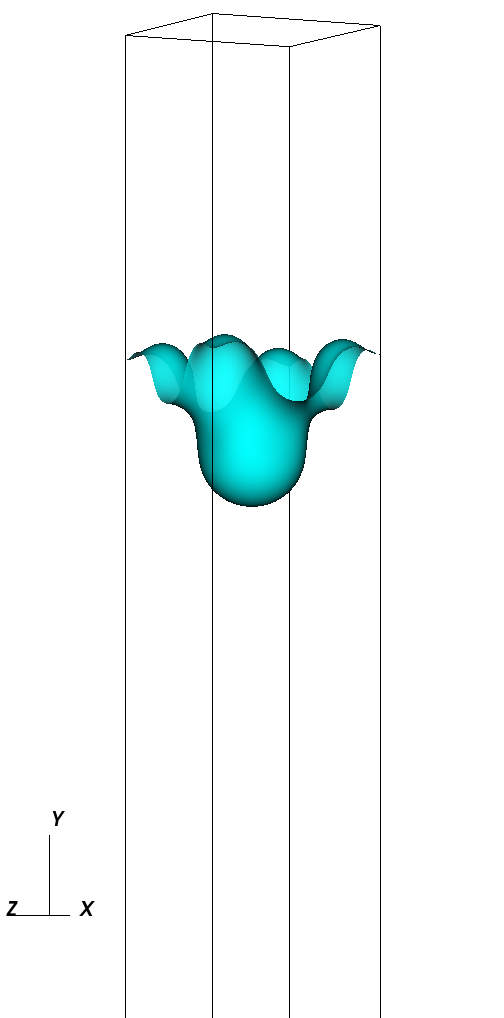}
			\label{subfig:rt3d_int_At15_snap_3}
		} \\
		
		\subfigure [$t = 5.76$] {
			\includegraphics[width=\linewidth]{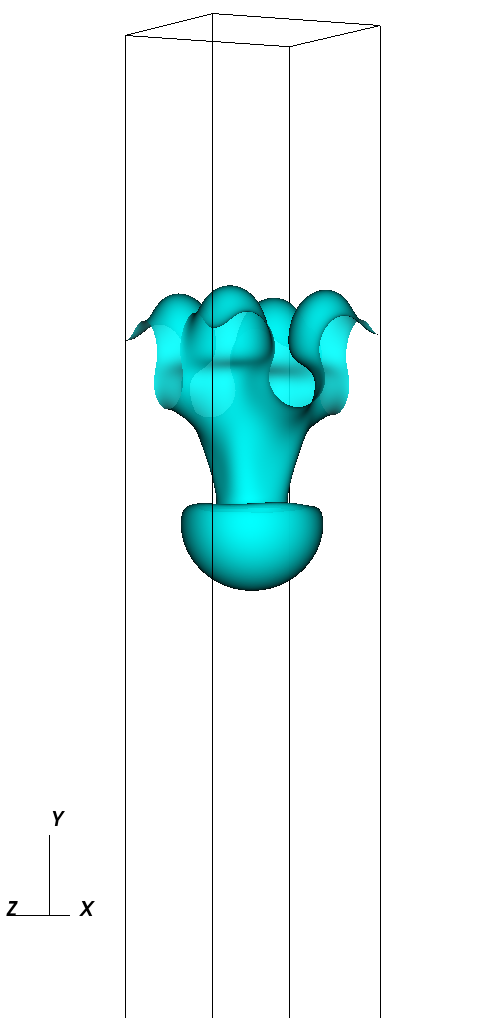}
			\label{subfig:rt3d_int_At15_snap_4}
		} &
		\subfigure [$t = 6.88$] {
			\includegraphics[width=\linewidth]{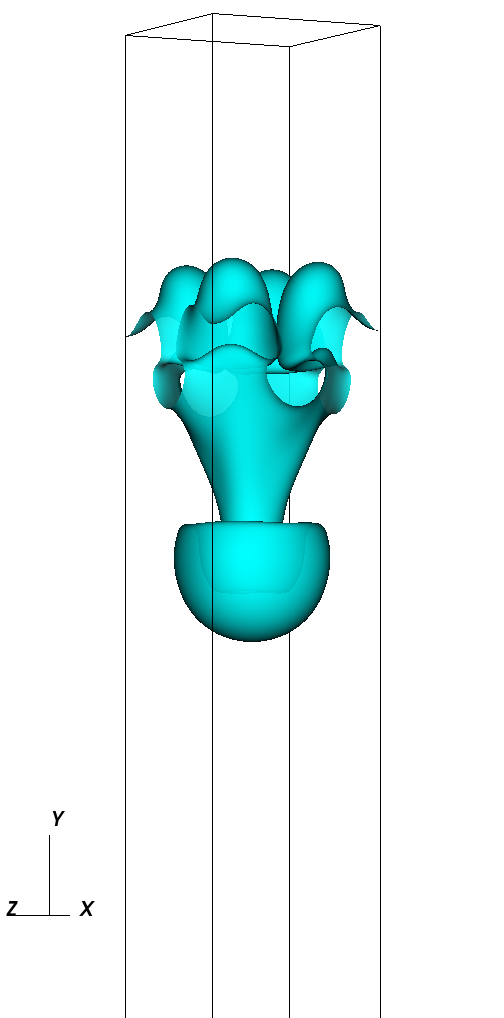}
			\label{subfig:rt3d_int_At15_snap_5}
		} & 
		\subfigure [$t = 7.84$] {
			\includegraphics[width=\linewidth]{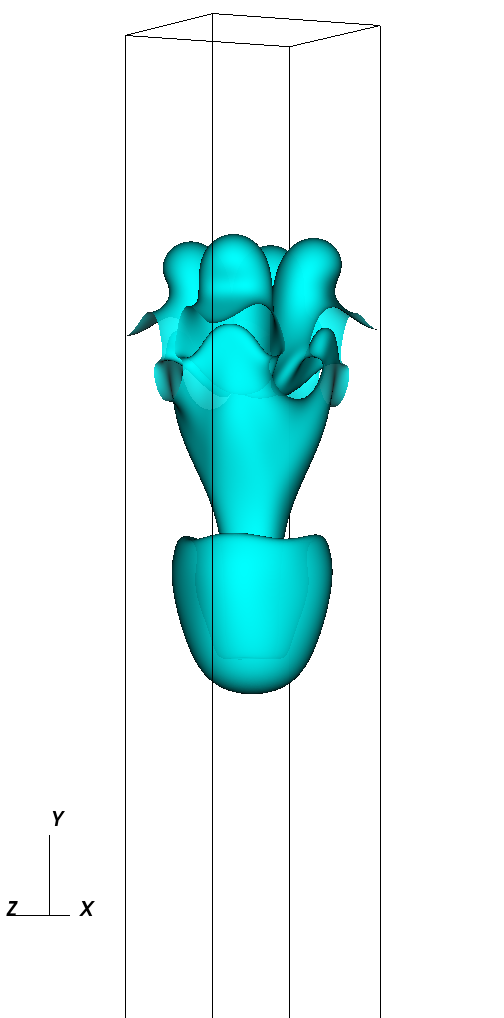}
			\label{subfig:rt3d_int_At15_snap_6}
		} 
		
	\end{tabular}
	\caption{\textit{Rayleigh-Taylor instability in 3D} (\cref{subsec:rayleigh_taylor_3d}): Zoomed in snapshots of the zero isosurface of $\phi$ which represents the interface at various time-points in the simulation for Rayleigh-Taylor instability for $At = 0.15$.  Here $t $(-) is the non-dimensional time.}
	\label{fig:rt3d_interface_At15}
\end{figure}

\tikzexternaldisable
\begin{figure}[H]
	\centering
	\begin{tikzpicture}
	\begin{axis}[width=0.45\linewidth,scaled y ticks=true,xlabel={Time (-)},ylabel={$H_s$},legend style={nodes={scale=0.65, transform shape}}, 
	xmin=0, xmax=7.8,
	ymin=-0.2,ymax=2,  
	cycle list/Set1,
	cycle multiindex* list={
		mark list*\nextlist
		Set1\nextlist
	},
	legend style={nodes={scale=0.95, transform shape}, row sep=2.5pt},
	legend entries={
		\citet{Liang2016},
		present study
	},
	legend pos= north west,
	legend image post style={scale=1.0}
	]
	\addplot table [x={time},y={spike},col sep=comma]{Figures/RT_3D/rt3d_Liang_PRE_Spike_position.csv};
	\addplot +[mark = none, filter discard warning=false, unbounded coords=discard, line width=0.35mm] table [x={time},y={spike},col sep=comma] {Figures/RT_3D/boundFiles.csv};
	\end{axis}
	\end{tikzpicture}
	\hskip 5pt
	\caption{Normalized spike amplitude 
		for the case of $Re = 1000$ and $We = 1000$ in the simulation of the 3D Rayleigh-Taylor instability (\cref{subsec:rayleigh_taylor_3d}).
	}
	\label{fig:RT_3d_comp}
\end{figure}
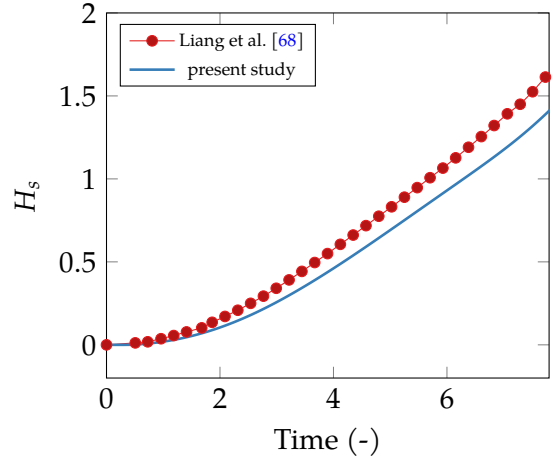

\tikzexternalenable
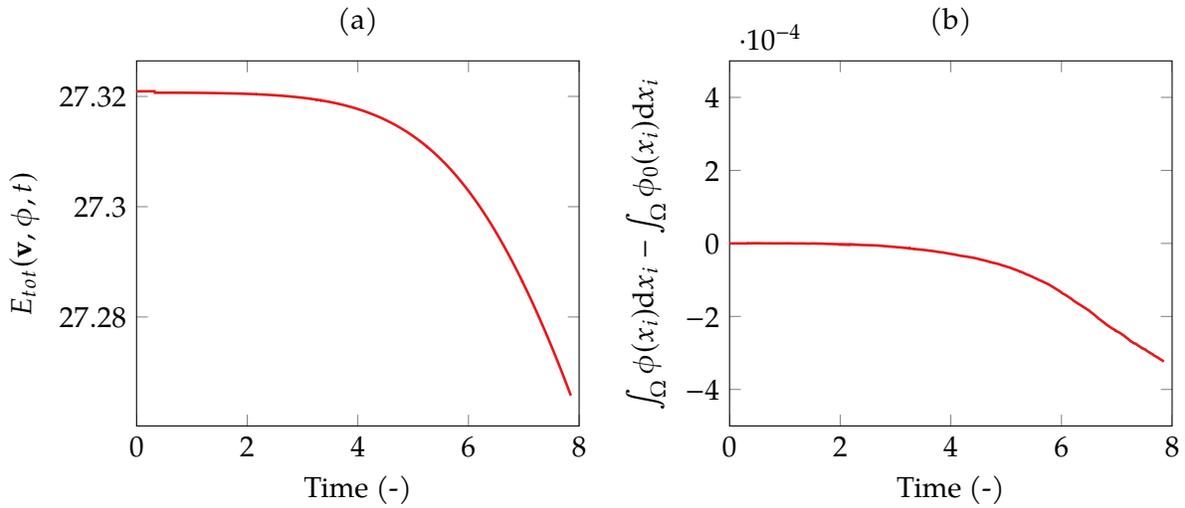
\begin{figure}[H]
	\centering
	\begin{tikzpicture}
	\begin{axis}[width=0.45\linewidth,scaled y ticks=true,xlabel={Time (-)},ylabel={$E_{tot}(\vec{v},\phi,t)$},legend style={nodes={scale=0.65, transform shape}}, xmin=0, xmax=8, 
	,  title={(a)},
		cycle list/Set1,
	cycle multiindex* list={
		mark list*\nextlist
		Set1\nextlist
	}
	]
	\addplot +[mark = none, each nth point=20, filter discard warning=false, unbounded coords=discard, line width=0.35mm] table [x={time},y={TotalEnergy},col sep=comma] {Figures/RT_3D/Energy_data.csv};
	\end{axis}
 	\end{tikzpicture}
	\hskip 5pt
	\begin{tikzpicture}
	\begin{axis}[width=0.45\linewidth,scaled y ticks=true,xlabel={Time (-)},ylabel={$\int_{\Omega} \phi (x_i)\mathrm{d}x_i - \int_{\Omega} \phi_{0} (x_i)\mathrm{d}x_i$},legend style={nodes={scale=0.65, transform shape}}, 
	ymin=-5e-4,ymax=5e-4, 
	xmin=0, xmax=8, 
	title={(b)},
		cycle list/Set1,
	cycle multiindex* list={
		mark list*\nextlist
		Set1\nextlist
	},
	]
	\addplot +[mark = none, each nth point=20, filter discard warning=false, unbounded coords=discard, line width=0.35mm] table [x={time},y={TotalPhiMinusInit},col sep=comma] {Figures/RT_3D/Energy_data.csv};
	\end{axis}
	\end{tikzpicture}
	\caption{ (a) Decay of the energy functional; (b) Mass conservation for the case of $Re = 3000$ and $We = 1000$
		in the simulation of the 3D Rayleigh-Taylor instability (\cref{subsec:rayleigh_taylor_3d}) 
	}
	\label{fig:RT_3d_energy}
\end{figure}
\tikzexternaldisable

\section{Scaling}
\label{sec:scaling}
In this section, we show preliminary scaling results of our solver. In order to conduct the scaling we choose the 3D Rayleigh Taylor instability problem (\cref{subsec:rayleigh_taylor_3d}).  The domain is a cube with dimensions $8 \times 1 \times 1$. We chose two different meshes to conduct the scaling study with varying level of refinements. The overall mesh is characterized by three different regions of refinement: bulk/base level (refinement level in the bulk), wall level (refinement level at the wall), and interface refinement level (refinement level at the interface, indicated by $|\phi| \leq 0.95$). A refinement level of $k$, would mean a mesh resolution of $8/2^{k}$. ~\Cref{tab:refineLevel} shows the different refinement levels. As the simulation progress, the region close to the interface, $|\phi| \leq 0.95$ is refined, while regions away from the interface are coarsened. We report the scaling behavior conducted over 5 time-steps. 
\begin{table}[]
\centering
\begin{tabular}{|c|c|c|c|c|}
\hline
\multirow{2}{*}{} & \multicolumn{3}{c|}{\textbf{Refinement level}} & \multirow{2}{*}{\textbf{\begin{tabular}[c]{@{}c@{}}Mesh\\ size\end{tabular}}} \\ \cline{2-4}
 & \textbf{Bulk} & \textbf{Wall} & \textbf{Interface} &  \\ \hline
Mesh \textsc{M1} & 6 & 7 & 9 & 5 M \\ \hline
Mesh \textsc{M2} & 8 & 10 & 13 & 30 M \\ \hline
\end{tabular}
\caption{The level of refinement for two different meshes used to conduct the scaling studies.}
\label{tab:refineLevel}
\end{table}

\Cref{fig:strong_scaling_breakup} shows the strong scaling  behavior of the individual solvers. The Cahn-Hillard (CH), the velocity prediction, and the pressure Poisson (PP) updates were solved using~\petsc's~Algebraic Multigrid (AMG), whereas the velocity update  was solved using conjugate gradient with Additive Schwarz preconditioner (see~\cref{sec:app_linear_solve}). We chose AMG for the first three updates on account of its $\mathcal{O}(N)$ complexity. Overall we see a good scaling behavior until the AMG setup cost begins to dominate.  Previous studies have demonstrated the inability of AMG to scale to a very large number of processors~\citep{saurabh2021industrial,sundar2012parallel} due to the associated communication and setup costs at such scales. This becomes particularly expensive in the case of adaptive meshes, as the AMG setup needs to be executed each time re-meshing is performed.  We see that the velocity prediction suffers the most from AMG as it has the largest matrix size among the equations solved with AMG. We note that ~\petsc's~native AMG is degree-of-freedom (dof) agnostic, which makes the AMG cost grow with increasing dof. Geometric Multigrid (GMG) tends to eliminate such cost ~\cite{sundar2012parallel}, and is particularly beneficial for hierarchical data structures like octrees.   

Panels (a), (b), and (c) of~\cref{fig:strong_scaling_breakup} show the strong scaling behavior of the Cahn-Hilliard solve (\cref{defn:weak_VMS_disc_CH}), velocity prediction step (\cref{defn:weak_VMS_disc}), and pressure Poisson step (\cref{defn:weak_VMS_disc_vel_update}), respectively.  In panels (a), (b), and (c) of~\cref{fig:strong_scaling_breakup} we see good scaling behavior for the first three points and then it starts to taper off as the communication and setup cost of~\petsc's~native AMG increases.  This behavior is illustrated in the histogram of the breakdown of cost for each solver in~\cref{fig:componentAnalysis}: we can see that the preconditioner (PC) setup cost increases with increasing processors for the Cahn-Hilliard solve, velocity prediction, and pressure Poisson; all of these updates are using AMG.  On the other hand in panel (d) of~\cref{fig:strong_scaling_breakup} we can see that velocity update solve scales well, even beyond the first three points as it is using a simple conjugate gradient solver with an Additive Schwarz preconditioner, which has relatively low communication and setup costs. This point is highlighted in~\cref{fig:componentAnalysis}, which shows a histogram of the cost breakdown for the pressure Poisson solve with varying processor number. 

\Cref{fig:strong_scaling_total} shows the scaling behavior of total time to solution as a function of the number of processors. We see a good scaling behavior until the AMG setup costs begin to dominate; this effect is  shown in ~\cref{fig:strong-RT3D}, in which  the cost for each major component of the solution is tabulated as a function of the number of processors. A constant parallel cost would indicate ideal strong scaling. Overall, we see a constant growing cost with increase in processor with the AMG cost starting to dominate for $\geq 3K$ processors. 

\tikzexternaldisable
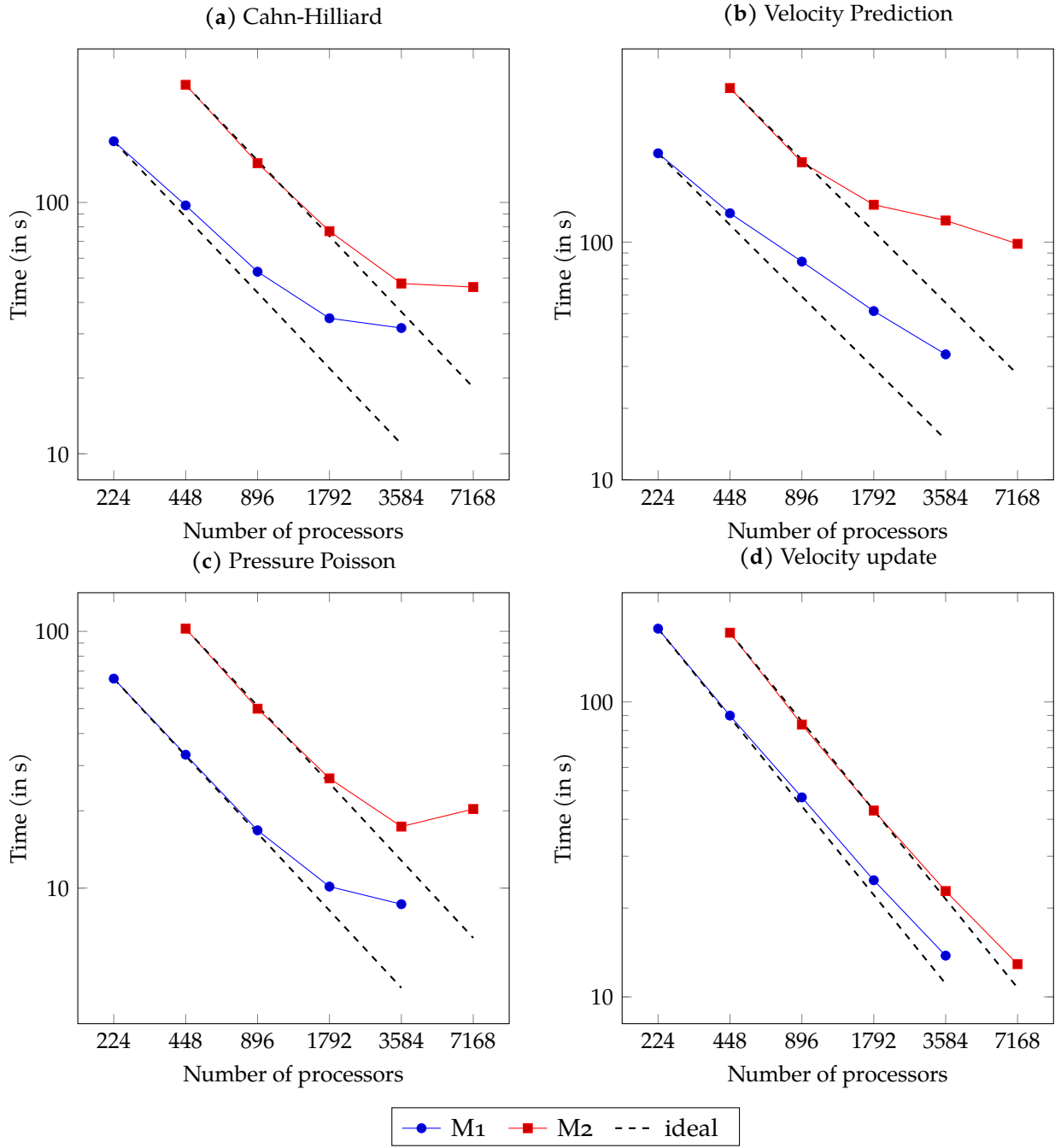
\begin{figure}[H]
	\centering
	\begin{tikzpicture}
	\begin{groupplot}[
	group style={group size=2 by 2,ylabels at=edge left,vertical sep=50pt, horizontal sep=50pt},
	ylabel style={text height=0.02\textwidth,inner ysep=-2pt},
	xlabel= Number of processors, 
	height=0.5*\linewidth,width=0.5\linewidth,/tikz/font=\small,
	]
	\nextgroupplot[xmode=log,log basis x={2},
	ymode=log,
	xtick={224, 448, 896, 1792, 3584,7168},
	xticklabels={$224$,$448$,$896$, $1792$, $3584$, $7168$},
	ytick={10,100},
	yticklabels={$10$,$100$},
	title={\textbf{(a)} Cahn-Hilliard},
	ylabel= Time (in s),
	]%
	\addplot
	table[x expr={\thisrow{proc}))},y expr={\thisrow{Block1-ch} + \thisrow{Block2-ch}},col sep=space]{Figures/ScalingData/Scaling/BlockTimingNewMesh1.txt};
	\addplot
	table[x expr={\thisrow{proc}))},y expr={\thisrow{Block1-ch} + \thisrow{Block2-ch}},col sep=space]{Figures/ScalingData/Scaling/BlockTimingNewMesh2.txt};

	\addplot [no markers, color=black, thick, dashed ] coordinates {
		(224,  175.2748)
		(3584, 10.9375)
	};
	\coordinate (top) at (rel axis cs:0,1);
	\addplot [no markers, color=black, thick, dashed ] coordinates {
		(448,  293.86258)
		(7168, 18.36641125)
	};
	\coordinate (top) at (rel axis cs:0,1);
	\nextgroupplot[xmode=log,log basis x={2},
	ymode=log,
xtick={224, 448, 896, 1792, 3584,7168},
	xticklabels={$224$,$448$,$896$, $1792$, $3584$, $7168$},
	ymin = 10,
	ytick={10,100},
	yticklabels={$10$,$100$},
	title={\textbf{(b)} Velocity Prediction},
	ylabel= Time (in s),
	]%
	\addplot
	table[x expr={\thisrow{proc}))},y expr={\thisrow{Block1-ns} + \thisrow{Block2-ns}},col sep=space]{Figures/ScalingData/Scaling/BlockTimingNewMesh1.txt};
	\addplot
	table[x expr={\thisrow{proc}))},y expr={\thisrow{Block1-ns} + \thisrow{Block2-ns}},col sep=space]{Figures/ScalingData/Scaling/BlockTimingNewMesh2.txt};
	\addplot [no markers, color=black, thick, dashed ] coordinates {
	    (224,  236.296)
	    (3584, 14.7685)
	};
	\coordinate (bot) at (rel axis cs:1,0);
	\addplot [no markers, color=black, thick, dashed ] coordinates {
	    (448,  444.3416)
	    (7168, 27.77135)
	};
	\coordinate (bot) at (rel axis cs:1,0);
	\nextgroupplot[xmode=log,log basis x={2},
	ymode=log,
	xtick={224, 448, 896, 1792, 3584,7168},
	xticklabels={$224$,$448$,$896$, $1792$, $3584$, $7168$},
	ytick={10,100},
	yticklabels={$10$,$100$},
	title={\textbf{(c)} Pressure Poisson},
	ylabel= Time (in s),
	]%
	\addplot
	table[x expr={\thisrow{proc}))},y expr={\thisrow{Block1-pp} + \thisrow{Block2-pp}},col sep=space]{Figures/ScalingData/Scaling/BlockTimingNewMesh1.txt};
	\addplot
	table[x expr={\thisrow{proc}))},y expr={\thisrow{Block1-pp} + \thisrow{Block2-pp}},col sep=space]{Figures/ScalingData/Scaling/BlockTimingNewMesh2.txt};
	\addplot [no markers, color=black, thick, dashed ] coordinates {
		(224,  65.4459)
		(3584, 4.0903)
	};
	\addplot [no markers, color=black, thick, dashed ] coordinates {
		(448,  102.433)
		(7168, 6.40)
	};
	\nextgroupplot[xmode=log,log basis x={2},
	ymode=log,
	xtick={224, 448, 896, 1792, 3584,7168},
	xticklabels={$224$,$448$,$896$, $1792$, $3584$, $7168$},
	ytick={10,100},
	yticklabels={$10$,$100$},
	title={\textbf{(d)} Velocity update},
	ylabel= Time (in s),
	]%
	\addplot
	table[x expr={\thisrow{proc}))},y expr={\thisrow{Block1-vu} + \thisrow{Block2-vu}},col sep=space]{Figures/ScalingData/Scaling/BlockTimingNewMesh1.txt}; \label{plot:M1}
	\addplot
	table[x expr={\thisrow{proc}))},y expr={\thisrow{Block1-vu} + \thisrow{Block2-vu}},col sep=space]{Figures/ScalingData/Scaling/BlockTimingNewMesh2.txt}; \label{plot:M2}
	\addplot [no markers, color=black, thick, dashed ] coordinates {
		(224,  177.2801)
		(3584, 11.08)
	};\label{plot:ideal}
	\addplot [no markers, color=black, thick, dashed ] coordinates {
		(448,  171.75146)
		(7168, 10.73446625)
	};
	\end{groupplot}
	\path (top|-current bounding box.south)--
        coordinate(legendpos)
        (bot|-current bounding box.south);
  \matrix[
      matrix of nodes,
      anchor=north,
      draw,
      inner sep=0.2em,
    ]at([yshift=-1ex]legendpos)
    { \ref{plot:M1}& \textsc{M1} &[5pt]
      \ref{plot:M2}& \textsc{M2} &[5pt]
      \ref{plot:ideal}& ideal &[5pt]\\
      };

	\end{tikzpicture}
	\caption{\textit{Strong scaling:} Shown in the panels are the strong scaling behaviors on TACC~\Frontera \,
	for the (a) Cahn-Hilliard (CH),
	 (b) velocity prediction (VP), (c) pressure Poisson (PP), and (d) velocity updates (VU). 
	}
	\label{fig:strong_scaling_breakup}
\end{figure}

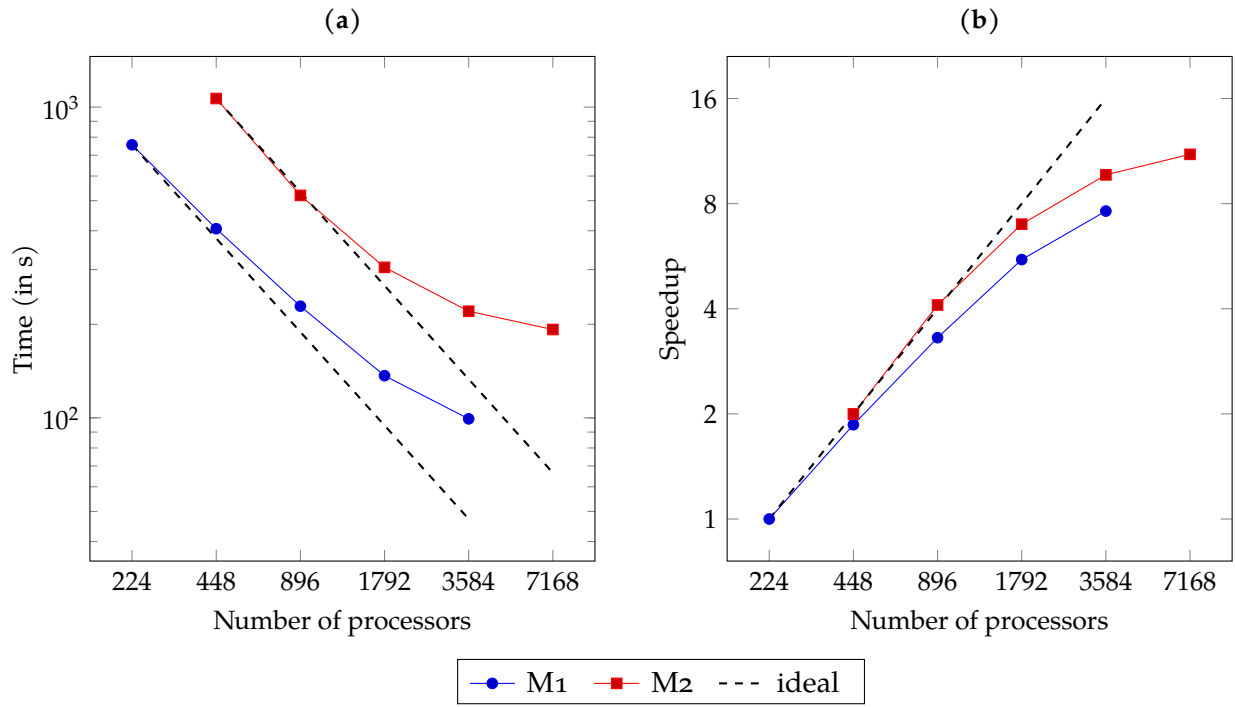
\begin{figure}[H]
	\centering
	\begin{tikzpicture}
	\begin{groupplot}[
	group style={group size=2 by 1,ylabels at=edge left,vertical sep=50pt, horizontal sep=50pt},
	ylabel style={text height=0.02\textwidth,inner ysep=-2pt},
	xlabel= Number of processors, 
	height=0.5*\linewidth,width=0.5\linewidth,/tikz/font=\small,
	]
	\nextgroupplot[xmode=log,log basis x={2},
	ymode=log,
	xtick={224, 448, 896, 1792, 3584,7168},
	xticklabels={$224$,$448$,$896$, $1792$, $3584$, $7168$},
	title={\textbf{(a)}},
	ylabel= Time (in s),
	]%
	\addplot table[x expr={\thisrow{proc}))},y expr={\thisrow{Block1-ch} + \thisrow{Block2-ch}  + \thisrow{Block1-ns} + \thisrow{Block2-ns} + \thisrow{Block1-pp} + \thisrow{Block2-pp} + \thisrow{Block1-vu} + \thisrow{Block2-vu}
	+ \thisrow{Remesh} + \thisrow{EquationUpdate}},col sep=space]{Figures/ScalingData/Scaling/BlockTimingNewMesh1.txt};
		\addplot table[x expr={\thisrow{proc}))},y expr={\thisrow{Block1-ch} + \thisrow{Block2-ch}  + \thisrow{Block1-ns} + \thisrow{Block2-ns} + \thisrow{Block1-pp} + \thisrow{Block2-pp} + \thisrow{Block1-vu} + \thisrow{Block2-vu}
	+ \thisrow{Remesh} + \thisrow{EquationUpdate}},col sep=space]{Figures/ScalingData/Scaling/BlockTimingNewMesh2.txt};
	\addplot [no markers, thick, dashed ] coordinates {
		(224,  756.194104)
		(3584, 47.2621315)
	};
	\coordinate (top) at (rel axis cs:0,1);
	\addplot [no markers, thick, dashed ] coordinates {
		(448,  1065.71688415755)
		(7168, 66.56)
	};

	\nextgroupplot[xmode=log,log basis x={2},
	ymode=log, log basis y={2},
	xtick={224, 448, 896, 1792, 3584,7168},
	xticklabels={$224$,$448$,$896$, $1792$, $3584$, $7168$},
	ytick={1,2,4,8,16},
	yticklabels={$1$,$2$,$4$,$8$,$16$},
	title={\textbf{(b)}},
	ylabel= Speedup,
	]%
	\addplot
	table[x expr={\thisrow{proc}))},
	y expr={756.194104/(\thisrow{Block1-ch} + \thisrow{Block2-ch}  + \thisrow{Block1-ns} + \thisrow{Block2-ns} + \thisrow{Block1-pp} + \thisrow{Block2-pp} + \thisrow{Block1-vu} + \thisrow{Block2-vu} + \thisrow{Remesh} + \thisrow{EquationUpdate})},col sep=space]{Figures/ScalingData/Scaling/BlockTimingNewMesh1.txt};\label{plot:solveM1}
		\addplot
	table[x expr={\thisrow{proc}))},
	y expr={2*1065.71688415755/(\thisrow{Block1-ch} + \thisrow{Block2-ch}  + \thisrow{Block1-ns} + \thisrow{Block2-ns} + \thisrow{Block1-pp} + \thisrow{Block2-pp} + \thisrow{Block1-vu} + \thisrow{Block2-vu} + \thisrow{Remesh} + \thisrow{EquationUpdate})},col sep=space]{Figures/ScalingData/Scaling/BlockTimingNewMesh2.txt};\label{plot:solveM2}
	\addplot [no markers, color=black, thick, dashed, fill ] coordinates {
		(224, 1)
		(3584, 16)
	};\label{plot:solveideal}
		\coordinate (bot) at (rel axis cs:1,0);
	\end{groupplot}
	%
	%
		\path (top|-current bounding box.south)--
        coordinate(legendpos)
        (bot|-current bounding box.south);
  \matrix[
      matrix of nodes,
      anchor=north,
      draw,
      inner sep=0.2em,
    ]at([yshift=-1ex]legendpos)
    { \ref{plot:solveM1}& \textsc{M1} &[5pt]
      \ref{plot:solveM2}& \textsc{M2} &[5pt]
      \ref{plot:solveideal}& ideal &[5pt]\\
      };
	\end{tikzpicture}
	\caption{\textit{Strong scaling:} shown in the panels are (a) the total time to solution as a function of 
	the number of processors, and (b) the relative speed up as a function of 
	the number of processors. These results were computed on the TACC~\Frontera \, supercomputer. 
	}
	\label{fig:strong_scaling_total}
\end{figure}

\begin{figure}[tbh]
  \begin{tikzpicture}
   \pgfplotsset{ybar stacked,xtick=data,xtick style={draw=none}}
    \begin{groupplot}[group style={group size= 1 by 2,vertical sep=50pt},height=0.35*\linewidth,width=\linewidth,/tikz/font=\small]
     \nextgroupplot[title = {Mesh \textsc{M1}},
     xmode=log,  log basis x={2},	grid=major,
 	 	xtick={224,448,896,1792,3584},
 	 	scaled y ticks = true,
  		scaled ticks=base 10:-3,
  			xticklabels={$224$,$448$,$896$,$1792$,$3584$},
  					ylabel = {Parallel cost (cores.s) $\rightarrow$},/tikz/font=\small]
  	\addplot [fill=div_d1] [bar shift=0cm] table[x={proc} , y expr={(\thisrow{Block1-ch}+\thisrow{Block2-ch})*\thisrow{nodes}/4.0}]{Figures/ScalingData/Scaling/BlockTimingNewMesh1.txt};
  	\addplot [fill=div_d2] [bar shift=0cm]  table[x={proc}, y expr={(\thisrow{Block1-ns} + \thisrow{Block2-ns})*\thisrow{nodes}/4}]{Figures/ScalingData/Scaling/BlockTimingNewMesh1.txt};
  	\addplot [fill=div_d3] [bar shift=0cm]  table[x={proc}, y expr={(\thisrow{Block1-pp}  +\thisrow{Block2-pp})*\thisrow{nodes}/4}]{Figures/ScalingData/Scaling/BlockTimingNewMesh1.txt};
  	\addplot [fill=div_d4] [bar shift=0cm]  table[x={proc}, y expr={(\thisrow{Block1-vu} + \thisrow{Block2-vu})*\thisrow{nodes}/4.0}]{Figures/ScalingData/Scaling/BlockTimingNewMesh1.txt};
  	\addplot [fill=div_d5] [bar shift=0cm] table[x={proc}, y expr={(\thisrow{Remesh}*\thisrow{nodes})/4.0}]{Figures/ScalingData/Scaling/BlockTimingNewMesh1.txt};
  	\coordinate (top) at (rel axis cs:0,1);
  \nextgroupplot[title = {Mesh \textsc{M2}},
   xmode=log,  log basis x={2},	grid=major,
     xtick={448,896,1792,3584,7168},
     scaled y ticks = true,
  		scaled ticks=base 10:-3,
  				xticklabels={$448$,$896$,$1792$,$3584$,$7168$},
					xlabel = {Number of processors $\rightarrow$},
  					ylabel = {Parallel cost (cores.s) $\rightarrow$},/tikz/font=\small]
  	\addplot [fill=div_d1] [bar shift=0cm] table[x={proc} , y expr={(\thisrow{Block1-ch}+\thisrow{Block2-ch})*\thisrow{nodes}/8.0}]{Figures/ScalingData/Scaling/BlockTimingNewMesh2.txt};\label{scalePlot:ch}
  	\addplot [fill=div_d2] [bar shift=0cm]  table[x={proc}, y expr={(\thisrow{Block1-ns} + \thisrow{Block2-ns})*\thisrow{nodes}/8}]{Figures/ScalingData/Scaling/BlockTimingNewMesh2.txt};\label{scalePlot:ns}
  	\addplot [fill=div_d3] [bar shift=0cm]  table[x={proc}, y expr={(\thisrow{Block1-pp}  +\thisrow{Block2-pp})*\thisrow{nodes}/8}]{Figures/ScalingData/Scaling/BlockTimingNewMesh2.txt};\label{scalePlot:pp}
  	\addplot [fill=div_d4] [bar shift=0cm]  table[x={proc}, y expr={(\thisrow{Block1-vu} + \thisrow{Block2-vu})*\thisrow{nodes}/8.0}]{Figures/ScalingData/Scaling/BlockTimingNewMesh2.txt};\label{scalePlot:vu}
  	\addplot [fill=div_d5] [bar shift=0cm] table[x={proc}, y expr={(\thisrow{Remesh}*\thisrow{nodes})/8.0}]{Figures/ScalingData/Scaling/BlockTimingNewMesh2.txt};\label{scalePlot:remesh}
  	 \coordinate (bot) at (rel axis cs:1,0);
	    \end{groupplot}
	    	\path (top|-current bounding box.north)--
      		coordinate(legendpos)
      		(bot|-current bounding box.north);
	\matrix[
    	matrix of nodes,
    	anchor=south,
    	draw,
    	inner sep=0.2em,
    	draw
  	]at([yshift=1ex]legendpos)
	{
    	\ref{scalePlot:ch}& CH-solve &[5pt]
    	\ref{scalePlot:ns}& VP-solve &[5pt]
    	\ref{scalePlot:pp}& PP-solve &[5pt]
    	\ref{scalePlot:vu}& VU-solve  &[5pt]
    	\ref{scalePlot:remesh}& Remesh &[5pt]\\
	};
  \end{tikzpicture}
  \caption{\textit{Strong scaling parallel cost:} (Run-time) $\times$ (number of cores) evaluated on the 3D Rayleigh taylor problem on the \Frontera~ supercomputer. A flat line would indicate ideal strong scaling. Here in the legend: CH-Solve corresponds to Cahn-Hilliard solve; VP-Solve corresponds to velocity prediction solve; PP-Solve corresponds to pressure Poisson Solve; VU-solve corresponds to velocity update solve.}
  \label{fig:strong-RT3D}

\end{figure}
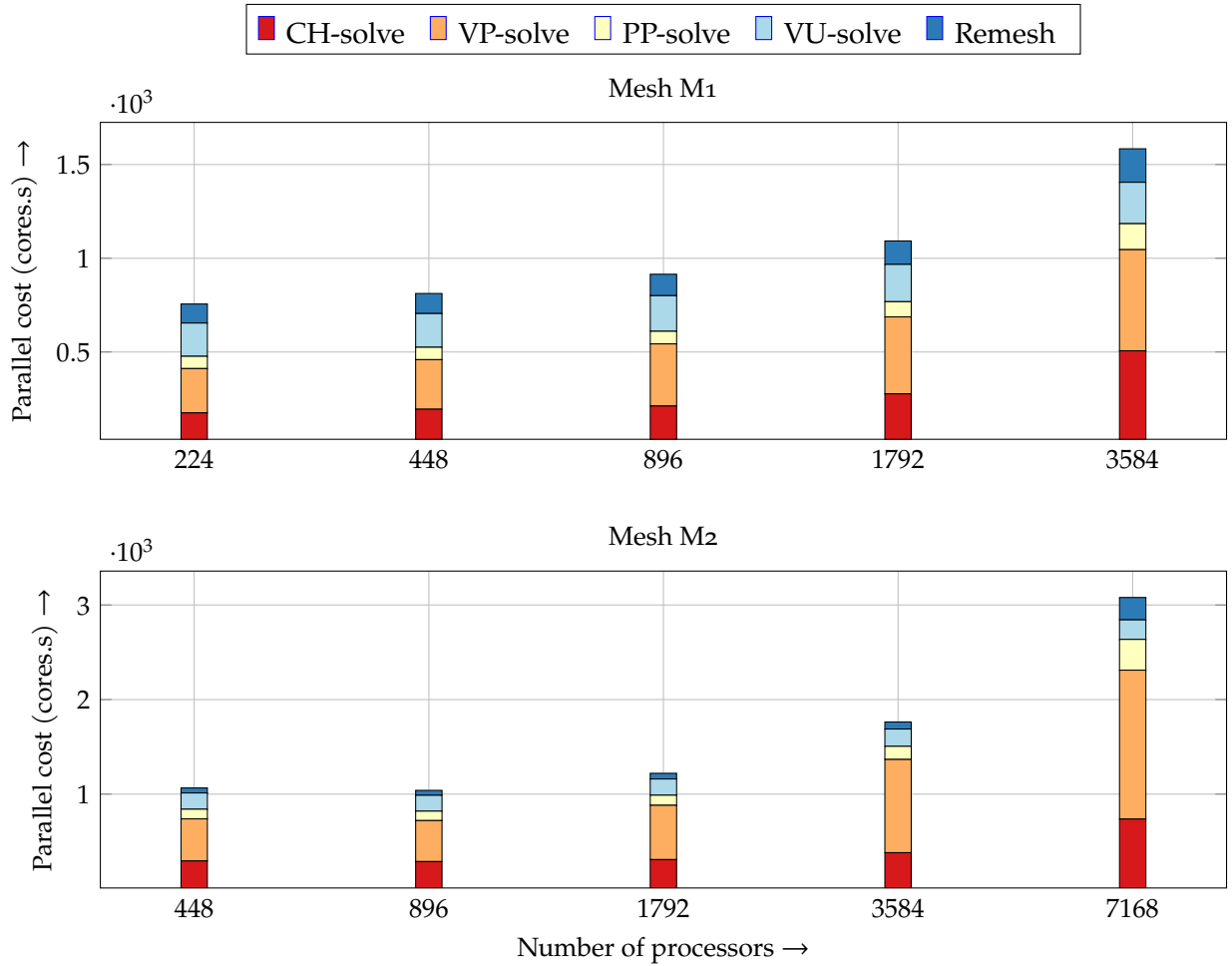

\Cref{fig:componentAnalysis} shows the time taken by individual components for mesh ~\textsc{M1} used in the scaling study. It can be clearly seen that each solver has different bottlenecks that would require different optimization strategies to overcome. For instance, the preconditioner (PC) setup and matrix assembly is dominant for the Cahn-Hilliard and velocity prediction solves, whereas vector assembly begins to equally dominate for the pressure Poisson (PP) solver. The matrix assembly becomes a bottleneck during the velocity update stage. With increasing number of processors, we can note that the percentage of time taken by AMG setup begins to dominate. This is a characteristic of AMG which has motivated the developers to look into GMG ~\cite{sundar2012parallel}, especially at exascale. This is left as an avenue of future work. A more sophisticated roofline analysis will help to suggest more advanced optimization strategies like vectorization or cache blocking for each individual steps. However, we can clearly see a major advantage of using the current projection based approach. Unlike the previous analysis carried out in ~\citet{Khanwale2021}, where PC setup was the most dominant part and can take up to 80\% of the total compute time, we see that we no longer rely on expensive ASM/LU~\footnote{Additive Schwarz preconditioners with LU were used locally for each block.} preconditioners to ensure the convergence of the solver. 
 However, further optimization would require us to abandon the vendor optimized library, especially for improve
 performance during the matrix assembly and remeshing stages. 

\tikzexternaldisable
\begin{figure}
\begin{tikzpicture}
    \pgfplotsset{ybar stacked,xtick=data,xtick style={draw=none}}
    \begin{groupplot}[group style={group size= 2 by 2,vertical sep=60pt}]
    \nextgroupplot[title=CH solve,bar shift=0pt,xmode=log,  log basis x={2},xmin=2,xmax=100,ymin =0, ymax=100,
  					xtick={4,8,16,32,64},
  					xticklabels={$224$,$448$,$896$,$1792$,$3584$},
  					ylabel = {Percentage of time},/tikz/font=\small]
  		\node[rotate=-90] at (axis cs: 4.0,10.0) {\tiny{$\mathbf{52\%}$}};
  		\node[rotate=-90] at (axis cs: 8.0,10.0) {\tiny{$\mathbf{46\%}$}};
  		\node[rotate=-90] at (axis cs: 16.0,10.0) {\tiny{$\mathbf{38\%}$}};
  		\node[rotate=-90] at (axis cs: 32.0,10.0) {\tiny{$\mathbf{35\%}$}};
  		\node[rotate=-90] at (axis cs: 64.0,10.0) {\tiny{$\mathbf{29\%}$}};
  		
  		\node[rotate=-90] at (axis cs: 4.0,58.0) {\tiny{$\mathbf{11\%}$}};
  		\node[rotate=-90] at (axis cs: 8.0,51.0) {\tiny{$\mathbf{10\%}$}};
  		\node[rotate=-90] at (axis cs: 16.0,42.0) {\tiny{$\mathbf{9\%}$}};
  		\node[rotate=-90] at (axis cs: 32.0,39.0) {\tiny{$\mathbf{9\%}$}};
  		\node[rotate=-90] at (axis cs: 64.0,33.0) {\tiny{$\mathbf{9\%}$}};
  		
  		\node[rotate=-90] at (axis cs: 4.0,80.0) {\tiny{$\mathbf{31\%}$}};
  		\node[rotate=-90] at (axis cs: 8.0,78.0) {\tiny{$\mathbf{39\%}$}};
  		\node[rotate=-90] at (axis cs: 16.0,60.0) {\tiny{$\mathbf{36\%}$}};
  		\node[rotate=-90] at (axis cs: 32.0,60.0) {\tiny{$\mathbf{41\%}$}};
  		\node[rotate=-90] at (axis cs: 64.0,60.0) {\tiny{$\mathbf{49\%}$}};
    	\addplot[fill=div_d5,opacity=1.0]  table [x=Nodes,y expr={100*\thisrow{MatAssembly}/\thisrow{Total}}] {Figures/ScalingData/Scaling/chComponent.txt};\label{gplots:plot1}
    	\addplot[fill=div_d3,opacity=1.0]  table [x=Nodes, y expr={100*\thisrow{VecAssembly}/\thisrow{Total}}] {Figures/ScalingData/Scaling/chComponent.txt};\label{gplots:plot2}
    	\addplot[fill=div_d2,opacity=1.0]  table [x=Nodes, y expr={100*\thisrow{PC}/\thisrow{Total}}] {Figures/ScalingData/Scaling/chComponent.txt};\label{gplots:plot3}
    	\addplot[fill=div_d1,opacity=1.0]  table [x=Nodes, y expr={100*(\thisrow{Total} - \thisrow{PC} - \thisrow{MatAssembly} - \thisrow{VecAssembly})/\thisrow{Total}}] {Figures/ScalingData/Scaling/chComponent.txt};\label{gplots:plot4}
    	
    \coordinate (top) at (rel axis cs:0,1);
    \nextgroupplot[title=Velocity prediction solve,bar shift=0pt,xmode=log,  log basis x={2},xmin=2,xmax=100,ymin =0, ymax=100,
   					xtick={4,8,16,32,64},
   					xticklabels={$224$,$448$,$896$,$1792$,$3584$},
 					/tikz/font=\small]
 					\node[rotate=-90] at (axis cs: 4.0,10.0) {\tiny{$\mathbf{70\%}$}};
  		\node[rotate=-90] at (axis cs: 8.0,10.0) {\tiny{$\mathbf{63\%}$}};
  		\node[rotate=-90] at (axis cs: 16.0,10.0) {\tiny{$\mathbf{54\%}$}};
  		\node[rotate=-90] at (axis cs: 32.0,10.0) {\tiny{$\mathbf{45\%}$}};
  		\node[rotate=-90] at (axis cs: 64.0,10.0) {\tiny{$\mathbf{37\%}$}};
  		
  		\node[rotate=-90] at (axis cs: 4.0,82.0) {\tiny{$\mathbf{21\%}$}};
  		\node[rotate=-90] at (axis cs: 8.0,80.0) {\tiny{$\mathbf{28\%}$}};
  		\node[rotate=-90] at (axis cs: 16.0,78.0) {\tiny{$\mathbf{37\%}$}};
  		\node[rotate=-90] at (axis cs: 32.0,78.0) {\tiny{$\mathbf{48\%}$}};
  		\node[rotate=-90] at (axis cs: 64.0,78.0) {\tiny{$\mathbf{56\%}$}};
     	\addplot[fill=div_d5,opacity=1.0]  table [x=Nodes,y expr={100*\thisrow{MatAssembly}/\thisrow{Total}}] {Figures/ScalingData/Scaling/nsComponent.txt};
     	\addplot[fill=div_d3,opacity=1.0]  table [x=Nodes, y expr={100*\thisrow{VecAssembly}/\thisrow{Total}}] {Figures/ScalingData/Scaling/nsComponent.txt};
     	\addplot[fill=div_d2,opacity=1.0]  table [x=Nodes, y expr={100*\thisrow{PC}/\thisrow{Total}}] {Figures/ScalingData/Scaling/nsComponent.txt};
     	\addplot[fill=div_d1,opacity=1.0]  table [x=Nodes, y expr={100*(\thisrow{Total} - \thisrow{PC} - \thisrow{MatAssembly} - \thisrow{VecAssembly})/\thisrow{Total}}] {Figures/ScalingData/Scaling/nsComponent.txt};
     	
	\nextgroupplot[title=PP solve,bar shift=0pt,xmode=log,  log basis x={2},xmin=2,xmax=100,ymin =0, ymax=100,
  	 				xtick={4,8,16,32,64},
   					xticklabels={$224$,$448$,$896$,$1792$,$3584$},
     				xlabel={Number of processor},
 					ylabel = {Percentage of time},/tikz/font=\small]
 							\node[rotate=-90] at (axis cs: 4.0,10.0) {\tiny{$\mathbf{41\%}$}};
  		\node[rotate=-90] at (axis cs: 8.0,10.0) {\tiny{$\mathbf{39\%}$}};
  		\node[rotate=-90] at (axis cs: 16.0,10.0) {\tiny{$\mathbf{37\%}$}};
  		\node[rotate=-90] at (axis cs: 32.0,10.0) {\tiny{$\mathbf{30\%}$}};
  		\node[rotate=-90] at (axis cs: 64.0,10.0) {\tiny{$\mathbf{25\%}$}};
  		
  		\node[rotate=-90] at (axis cs: 4.0,50.0) {\tiny{$\mathbf{30\%}$}};
  		\node[rotate=-90] at (axis cs: 8.0,50.0) {\tiny{$\mathbf{29\%}$}};
  		\node[rotate=-90] at (axis cs: 16.0,50.0) {\tiny{$\mathbf{29\%}$}};
  		\node[rotate=-90] at (axis cs: 32.0,45.0) {\tiny{$\mathbf{25\%}$}};
  		\node[rotate=-90] at (axis cs: 64.0,40.0) {\tiny{$\mathbf{23\%}$}};
  		
  		\node[rotate=-90] at (axis cs: 4.0,81.0) {\tiny{$\mathbf{16\%}$}};
  		\node[rotate=-90] at (axis cs: 8.0,81.0) {\tiny{$\mathbf{20\%}$}};
  		\node[rotate=-90] at (axis cs: 16.0,81.0) {\tiny{$\mathbf{24\%}$}};
  		\node[rotate=-90] at (axis cs: 32.0,81.0) {\tiny{$\mathbf{33\%}$}};
  		\node[rotate=-90] at (axis cs: 64.0,81.0) {\tiny{$\mathbf{38\%}$}};
  		
    	\addplot[fill=div_d5,opacity=1.0]  table [x=Nodes,y expr={100*\thisrow{MatAssembly}/\thisrow{Total}}] {Figures/ScalingData/Scaling/ppComponent.txt};
     	\addplot[fill=div_d3,opacity=1.0]  table [x=Nodes, y expr={100*\thisrow{VecAssembly}/\thisrow{Total}}] {Figures/ScalingData/Scaling/ppComponent.txt};
     	\addplot[fill=div_d2,opacity=1.0]  table [x=Nodes, y expr={100*\thisrow{PC}/\thisrow{Total}}] {Figures/ScalingData/Scaling/ppComponent.txt};
     	\addplot[fill=div_d1,opacity=1.0]  table [x=Nodes, y expr={100*(\thisrow{Total} - \thisrow{PC} - \thisrow{MatAssembly} - \thisrow{VecAssembly})/\thisrow{Total}}] {Figures/ScalingData/Scaling/ppComponent.txt};
    \nextgroupplot[title=Velocity update solve,bar shift=0pt,xmode=log,  log basis x={2},xmin=2,xmax=100,ymin =0, ymax=100,
      				xlabel={Number of processor},
   					xtick={4,8,16,32,64},
   					xticklabels={$224$,$448$,$896$,$1792$,$3584$},
 					/tikz/font=\small]
 					 		\node[rotate=-90] at (axis cs: 4.0,10.0) {\tiny{$\mathbf{86\%}$}};
  		\node[rotate=-90] at (axis cs: 8.0,10.0) {\tiny{$\mathbf{85\%}$}};
  		\node[rotate=-90] at (axis cs: 16.0,10.0) {\tiny{$\mathbf{84\%}$}};
  		\node[rotate=-90] at (axis cs: 32.0,10.0) {\tiny{$\mathbf{79\%}$}};
  		\node[rotate=-90] at (axis cs: 64.0,10.0) {\tiny{$\mathbf{73\%}$}};
  		
 		\node[rotate=-90] at (axis cs: 4.0,90.0) {\tiny{$\mathbf{7\%}$}};
  		\node[rotate=-90] at (axis cs: 8.0,89.0) {\tiny{$\mathbf{8\%}$}};
  		\node[rotate=-90] at (axis cs: 16.0,89.0) {\tiny{$\mathbf{8\%}$}};
  		\node[rotate=-90] at (axis cs: 32.0,84.0) {\tiny{$\mathbf{8\%}$}};
  		\node[rotate=-90] at (axis cs: 64.0,84.0) {\tiny{$\mathbf{10\%}$}};
     	\addplot[fill=div_d5,opacity=1.0]  table [x=Nodes,y expr={100*\thisrow{MatAssembly}/\thisrow{Total}}] {Figures/ScalingData/Scaling/vuComponent.txt};
     	\addplot[fill=div_d3,opacity=1.0]  table [x=Nodes, y expr={100*\thisrow{VecAssembly}/\thisrow{Total}}] {Figures/ScalingData/Scaling/vuComponent.txt};
     	\addplot[fill=div_d2,opacity=1.0]  table [x=Nodes, y expr={100*\thisrow{PC}/\thisrow{Total}}] {Figures/ScalingData/Scaling/vuComponent.txt};
     	\addplot[fill=div_d1,opacity=1.0]  table [x=Nodes, y expr={100*(\thisrow{Total} - \thisrow{PC} - \thisrow{MatAssembly} - \thisrow{VecAssembly})/\thisrow{Total}}] {Figures/ScalingData/Scaling/vuComponent.txt};
    \coordinate (bot) at (rel axis cs:1,0);
    \end{groupplot}
	\path (top|-current bounding box.north)--
      		coordinate(legendpos)
      		(bot|-current bounding box.north);
	\matrix[
    	matrix of nodes,
    	anchor=south,
    	draw,
    	inner sep=0.2em,
    	draw
  	]at([yshift=1ex]legendpos)
	{
    	\ref{gplots:plot1}& Matrix Assembly &[5pt]
    	\ref{gplots:plot2}& Vector Assembly &[5pt]
    	\ref{gplots:plot3}& PC Setup        &[5pt]
    	\ref{gplots:plot4}& Others          &[5pt]\\
	};
\end{tikzpicture}
\caption{The panels show the percentage of time taken for matrix assembly, vector assembly, preconditioner (PC) setup, and other parts of the update as a function of the number of processors. The panels are organized as follows: (a) Cahn-Hilliard,
(b) velocity prediction, (c) pressure Poisson, and (d) velocity update.}
\label{fig:componentAnalysis}
\end{figure}
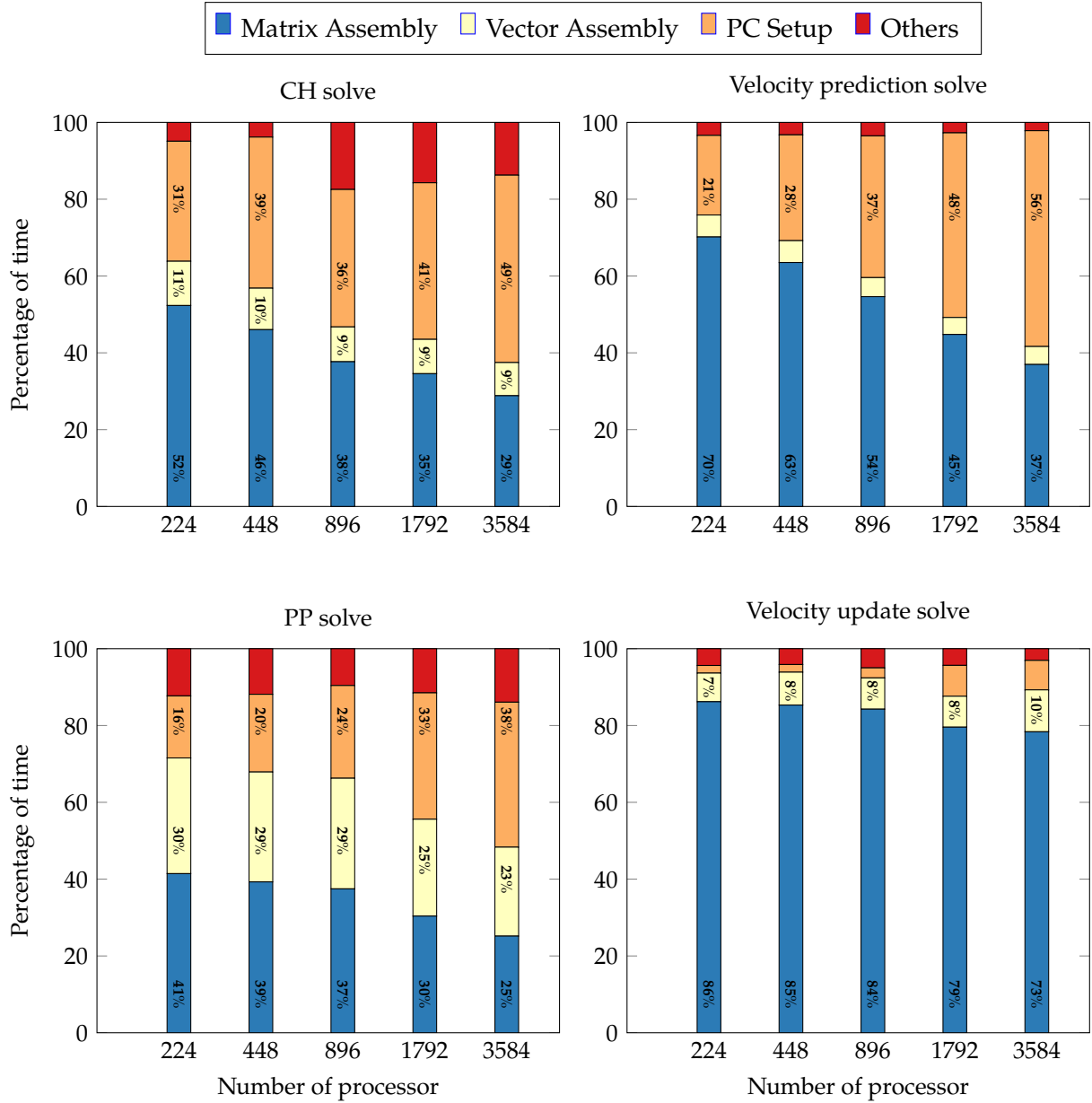

\subsection{Performance comparison with the fully coupled method:}
The projection method presented in this paper in conjunction with the variational multiscale approach, allows us to construct operators which are elliptic in each block. This allows for a selection of cheaper preconditioners compared to the pressure stabilized approach in~\citet{Khanwale2021}.  Additionally, the block projection approach also allows us to selectively provide implicit treatment and decreases cost, e.g. Cahn-Hilliard operators are fully implicit, while the Navier-Stokes operators are now semi-implicit, which linearizes the operator.  To assess the performance gains of the projection based method in this paper compared to the fully-implicit method in~\citet{Khanwale2021}, we perform a strong scaling analysis for both methods.  For better comparison, the fully-coupled method from~\citet{Khanwale2021} is implemented in the current parallel framework using the same mesh (M2 from \cref{tab:refineLevel}).  We use the same case of 3D Rayleigh Taylor instability used for performing the rest of scaling analysis.  The timestep size used in both the methods is also kept the same, so there is no artificial speedup due to small timestep for the projection method.  \Cref{fig:strong_scaling_comp} shows the strong scaling  behavior for both the fully coupled method from~\citet{Khanwale2021} and the projection method presented in the current work.  We can observed that both methods scale well, however the projection method is consistently cheaper than the fully-coupled method.  We observe a speedup of roughly 2.5 to 4 times with the block projection method. 
\begin{figure}
	\centering
	\begin{tikzpicture}[thick,scale=0.7, every node/.style={transform shape}]
		\begin{loglogaxis}[width=300pt,scaled y ticks=true,title={Total Solve Time},xlabel={Number of processors},ylabel={Time (s)},
			legend style={nodes={scale=0.65, transform shape}, text = black},
			xtick={272,544,1088,2176,4352,8704},
			xticklabels={$272$,$544$,$1K$,$2K$,$4K$,$8K$},
			ytick={200,400,600,1000,1400,2000},
			yticklabels={$200$,$400$,$600$,$1000$,$1400$,$2000$},
			]
			\addplot table [x={ntasks},y={solve_time},col sep=comma] {Figures/scalingComparison/level10_block_projection.csv};
			\addplot table [x={ntasks},y={solve_time},col sep=comma] {Figures/scalingComparison/level10_fully_coupled.csv};
			     \legend{Block projection(this work), Fully coupled (\citet{Khanwale2021}) }
		\end{loglogaxis}
	\end{tikzpicture} 
	\caption{Comparison of total solve time presented in this work in comparison to the fully coupled method of ~\citet{Khanwale2021}.}
	\label{fig:strong_scaling_comp}		
\end{figure}
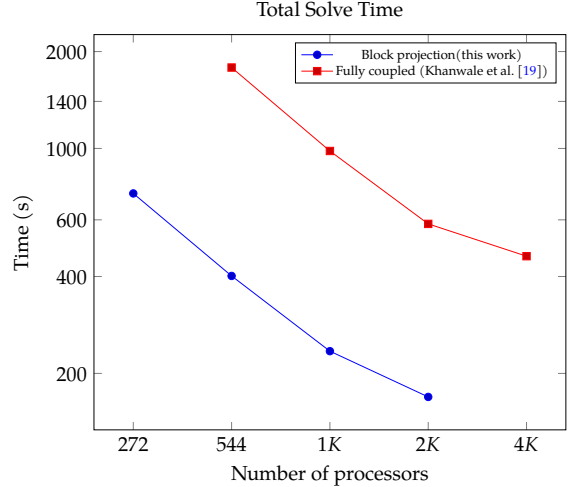

\section{Conclusions and future work}

In this work we developed a projection based numerical framework for solving the Cahn-Hilliard Navier-Stokes (CHNS) model of two-phase flows.  We developed a projection based second order in time numerical scheme to improve on the fully implicit scheme in~\citet{Khanwale2021}.   The pressure projection method allowed us to decouple the pressure equation.  The decoupling of pressure allowed us to use highly efficient and parallel Algebraic Multigrid Methods as all operators are close to elliptic. We developed a block iterative, hybrid semi-implicit-fully-implicit in time method. The new method requires Newton iteration only for Cahn-Hilliard resulting faster solution time at relatively larger time steps.  We utilized a conforming Galerkin method for spatial discretization with variational multiscale approach (VMS) approach.  VMS method allows us to stabilize the pressure projection equation as pressure and velocity is represented by equal order basis functions.  We deployed this approach on the massively parallel octree based adaptive meshing framework based on KD-trees called~\dendroKT.  We verifed numerically that the scheme is energy stable and mass conserving.  We tested the framework for a comprehensive set of canonical cases for validation.  We run the long time turbulent 2D Rayleigh-Taylor cases with adaptive meshing, resolving thin stable filaments with a very high resolution.  
Further, we performed a detailed scaling analysis of numerical framework and showed that the time required for preconditioning and Jacobian assembly is minimized compared to the method in~\citet{Khanwale2021}.

We identify three main avenues of future work.  First one entails coming up with efficient mass conserving interpolation strategies with conforming Galerkin methods. This is currently an open question.  The second avenue is rigorously proving energy stability of the current numerical scheme. Rigorous energy stability of such second order projection schemes are tedious to prove.  Finally, we notice that the algebraic multigrid methods become very expensive after certain number of processors are reached due to excessive communication. We are working on a tailor made multi-grid method for octree meshes which optimizes communication for extreme scalability of the iterative solvers.  

\section{Acknowledgements}
The authors acknowledge XSEDE grant number TG-CTS110007 for computing time on TACC Stampede2 and SDSC Expanse.  The authors also acknowledge computing allocation through Pathways award number PHY20033 on TACC Frontera, and Iowa State University computing resources through NSF 2018594. JAR was supported in part by NSF Grants DMS--1620128 and DMS--2012699. BG, KS, MAK were funded in part by NSF grant 1935255, 1855902. HS and MI were funded in part by NSF grant 1912930. 

\bibliographystyle{elsarticle-num-names}
\bibliography{total_references,hari}

\newpage

\appendix

\section{Details of solver selection for the numerical experiments}
\label{sec:app_linear_solve}
Solver settings presented below are broken up in 4 structures for 4 blocks we solve i.e, velocity prediction (see~\cref{defn:weak_VMS_disc}), pressure Poisson (see~\cref{defn:weak_VMS_disc_pp_vel}), velocity update (see~\cref{defn:weak_VMS_disc_vel_update}), and Cahn-Hilliard (CH) nonlinear solve (see~\cref{defn:weak_VMS_disc_CH}) respectively.
\subsection{Manufactured solutions}
\label{subsec:app_manufac_sol}
For the cases presented in \cref{subsec:manfactured_soln_result} we use the flexible GMRES linear solver algebraic multigrid based preconditioning for pressure Poisson (\texttt{solver\_options\_pp}), CH (\texttt{solver\_options\_ch}), and a biCGstab based Krylov solver with Additive Schwarz based preconditioning for velocity prediction (\texttt{solver\_options\_momentum}).  For better reproduction, the command line options we provide {\sc petsc} are given below which include some commands used for printing some norms as well.  Here \texttt{ksp\_atol} here is the absolute tolerance of the linear solver, \texttt{ksp\_rtol} is the relative tolerance of the linear solver, and \texttt{snes\_rtol}, \texttt{snes\_atol} are relative and absolute tolerances of Newton iteration.  \petsc~AMG's inner smoothers are chosen to be a gmres (\texttt{gmres}) method with successive over relaxation preconditioning (\texttt{sor}).   
\begin{lstlisting}
solver_options_momentum = {
	ksp_atol = 1e-10
	ksp_rtol = 1e-10
	ksp_type = "bcgs"
	pc_type = "asm"
# monitor
	ksp_converged_reason = ""
};

solver_options_pp = {
	ksp_atol = 1e-10
	ksp_rtol = 1e-10
#multigrid
	ksp_type = "fgmres"
	pc_type = "gamg"
	pc_gamg_asm_use_agg = True
	mg_levels_ksp_type = "gmres"
	mg_levels_pc_type = "sor"
	mg_levels_ksp_max_it = 4
# monitor
	ksp_monitor = ""
	ksp_converged_reason = ""
};

solver_options_vupdate = {
	ksp_atol = 1e-10
	ksp_rtol = 1e-10
#ksp_monitor = ""
	ksp_converged_reason=""
	ksp_type = "cg"
	pc_type = "asm"
};

solver_options_ch = {
	snes_rtol = 1e-15
	snes_atol = 1e-15
	ksp_atol = 1e-12
	ksp_rtol = 1e-12
	ksp_stol = 1e-12
#multigrid
	ksp_type = "fgmres"
	pc_type = "gamg"
	pc_gamg_asm_use_agg = True
	mg_levels_ksp_type = "gmres"
	mg_levels_pc_type = "sor"
	mg_levels_ksp_max_it = 4
# monitor
	snes_monitor = ""
	snes_converged_reason = ""
	ksp_converged_reason = ""
};
\end{lstlisting}
\subsection{single bubble rise in 2D}
\label{subsec:app_bubble_rise}
In this case we use the flexible GMRES linear solver algebraic multigrid based preconditioning for velocity prediction (\texttt{solver\_options\_momentum}), pressure Poisson (\texttt{solver\_options\_pp}), and CH (\texttt{solver\_options\_ch}). A simple conjugate gradient method with no Additive Schwarz preconditioning is used for velocity update (\texttt{solver\_options\_vu}). 
\begin{lstlisting}
solver_options_momentum = {
	ksp_atol = 1e-8
	ksp_rtol = 1e-7

#multigrid
	ksp_type = "fgmres"
	pc_type = "gamg"
	pc_gamg_asm_use_agg = True
	mg_levels_ksp_type = "gmres"
	mg_levels_pc_type = "sor"
	mg_levels_ksp_max_it = 10
# monitor
	ksp_monitor = ""
	ksp_converged_reason = ""
};

solver_options_pp = {
	ksp_atol = 1e-8
	ksp_rtol = 1e-7
	ksp_stol = 1e-7

#multigrid
	ksp_type = "fgmres"
	pc_type = "gamg"
	pc_gamg_asm_use_agg = True
	mg_levels_ksp_type = "gmres"
	mg_levels_pc_type = "sor"
	mg_levels_ksp_max_it = 4
	pc_gamg_sym_graph = True

# monitor
	ksp_monitor = ""
	ksp_converged_reason = ""
};

solver_options_vupdate = {
	ksp_atol = 1e-8
	ksp_rtol = 1e-7
	ksp_stol = 1e-7
	ksp_converged_reason=""
	ksp_type = "cg"
	pc_type = "asm"
};

solver_options_ch = {
	snes_rtol = 1e-10
	snes_atol = 1e-10
	ksp_atol = 1e-8
	ksp_rtol = 1e-7
	ksp_stol = 1e-7

#multigrid
	ksp_type = "fgmres"
	pc_type = "gamg"
	pc_gamg_asm_use_agg = True
	mg_levels_ksp_type = "gmres"
	mg_levels_pc_type = "sor"
	mg_levels_ksp_max_it = 15

# monitor
	snes_monitor = ""
	snes_converged_reason = ""
	ksp_converged_reason = ""
};
\end{lstlisting}
\subsection{Rayleigh-Taylor instablity}
\label{subsec:app_rt2d}
In this case we use the flexible GMRES linear solver algebraic multigrid based preconditioning for velocity prediction (\texttt{solver\_options\_momentum}), pressure Poisson (\texttt{solver\_options\_pp}), and CH (\texttt{solver\_options\_ch}). A simple conjugate gradient method with no Additive Schwarz preconditioning is used for velocity update (\texttt{solver\_options\_vu}). 
\begin{lstlisting}
solver_options_momentum = {
	ksp_atol = 1e-8
	ksp_rtol = 1e-7

#multigrid
	ksp_type = "fgmres"
	pc_type = "gamg"
	pc_type = "none"
	pc_gamg_asm_use_agg = True
	mg_levels_ksp_type = "gmres"
	mg_levels_pc_type = "sor"
	mg_levels_ksp_max_it = 10

# monitor
	ksp_monitor = ""
	ksp_converged_reason = ""
};

solver_options_pp = {
	ksp_atol = 1e-8
	ksp_rtol = 1e-7
	ksp_stol = 1e-7

#multigrid
	ksp_type = "fgmres"
	pc_type = "gamg"
	pc_gamg_asm_use_agg = True
	mg_levels_ksp_type = "gmres"
	mg_levels_pc_type = "sor"
	mg_levels_ksp_max_it = 4

# monitor
	ksp_monitor = ""
	ksp_converged_reason = ""
};

solver_options_vupdate = {
	ksp_atol = 1e-8
	ksp_rtol = 1e-7
	ksp_converged_reason=""
	ksp_type = "cg"
	pc_type = "asm"
};

solver_options_ch = {
	snes_rtol = 1e-10
	snes_atol = 1e-10
	ksp_atol = 1e-8
	ksp_rtol = 1e-7

#multigrid
	ksp_type = "fgmres"
	pc_type = "gamg"
	pc_gamg_asm_use_agg = True
	mg_levels_ksp_type = "gmres"
	mg_levels_pc_type = "sor"
	mg_levels_ksp_max_it = 15

# monitor
	snes_monitor = ""
	snes_converged_reason = ""
	ksp_converged_reason = ""
};
\end{lstlisting}
This setup works very well with a relatively constant number of Krylov iterations as the number of processes are increased in the massively parallel setting. These same options are used for the simulation of Rayleigh-Taylor in 3D.  The scaling results we present use the same setup of solvers except for the no of max ksp iterations on each multigrid level is increased to 30 (\texttt{mg\_levels\_ksp\_max\_it}). 


\end{document}